\newtheorem{thm}{Theorem}[section]
\newtheorem{prop}[thm]{Proposition}
\newtheorem{lemma}[thm]{Lemma}
\theoremstyle{remark}
\newtheorem{rmk}[thm]{Remark}
\newtheorem{example}[thm]{Example}
\theoremstyle{definition}
\newtheorem{defn}[thm]{Definition}
\newtheorem{proposition}[thm]{Proposition}
\newcommand{\z}{^{0}}
\renewcommand{\sp}{\textrm{span}}
\newcommand{\bi}{\begin{itemize}}
\newcommand{\ei}{\end{itemize}}
\newcommand{\be}{\begin{enumerate}}
\newcommand{\ee}{\end{enumerate}}
\newcommand{\C}{\mathbb{C}}
\newcommand{\T}{\mathbb{T}}
\renewcommand{\H}{\mathcal{H}}
\newcommand{\R}{\mathbb{R}}
\newcommand{\N}{\mathbb{N}}
\newcommand{\Z}{\mathbb{Z}}
\renewcommand{\l}{\langle}
\newcommand{\clsp}{\operatorname{\overline{span}}}
\renewcommand{\r}{\rangle}
\providecommand{\keywords}[1]{{\textit{Key words and phrases:}} #1}
\providecommand{\classification}[1]{{\textit{2010 Mathematics Subject Classification:}} #1}
\begin{document}

\title{Wavelets and graph $C^*$-algebras}
\author{Carla Farsi, Elizabeth Gillaspy, Sooran Kang and Judith Packer}

\maketitle

\begin{abstract} Here we give an overview on the connection between wavelet theory and representation theory for graph $C^{\ast}$-algebras, including the higher-rank graph $C^*$-algebras of A. Kumjian and D. Pask. Many authors have studied different aspects of this connection over the last 20 years, and we begin this paper with a survey of the known results.  We then discuss several new ways to generalize these results and obtain wavelets associated to representations of higher-rank graphs. In \cite{FGKP}, we introduced the ``cubical wavelets" associated to a higher-rank graph. Here, we generalize this construction to build wavelets of arbitrary shapes. We also present a different but related construction of wavelets associated to a higher-rank graph, which we anticipate will have applications to traffic analysis on networks.  Finally, we generalize the spectral graph wavelets of \cite{hammond} to higher-rank graphs, giving a third family of wavelets associated to higher-rank graphs.
\end{abstract}

\classification{46L05, 42C40}

\keywords{Graph wavelets; representations of $C^*$-algebras; higher-rank graphs.}

\tableofcontents

\section{Introduction}
Wavelets were developed by S.  Mallat, Y. Meyer, and I. Daubechies in the late 1980's \cite{mallat} and early 1990's as functions on $L^2(\mathbb R^n)$ that were well-localized in either the ``time" or ``frequency" domain, and thus could be used to form an orthonormal basis for $L^2(\mathbb R^n)$ that behaved well under compression algorithms, for the purpose of signal or image storage.  Mallat and  Meyer developed a very important algorithm, the so-called multiresolution analysis algorithm, as a way to construct so-called ``father wavelets" and ``mother wavelets" on $L^2(\mathbb R)$ from associated ``filter functions" \cite{mallat}, \cite{Strichartz1}.

 Beginning with the initial work of O. Bratteli and P. Jorgensen in the mid 1990's, which gave a relationship between multiresolution analyses for wavelets on $L^2(\mathbb R)$ and certain types of representations of the Cuntz algebra ${\mathcal O}_N,$  the representations of certain graph $C^*$-algebras and the constructions of wavelets on $L^2(\mathbb R)$ were shown to be related. To be more precise, in 
1996, O. Bratteli and P. Jorgensen first announced in \cite{BJA} that there was a correspondence between dilation-translation wavelets of scale $N$ on $L^2(\mathbb R)$ constructed via the multiresolution analyses of Mallat and  Meyer, and certain representations of the Cuntz algebra ${\mathcal O}_N$.  Later, together with D. Dutkay, Jorgensen extended this analysis to describe wavelets on $L^2$-spaces corresponding to certain inflated fractal sets (\cite{dutkay-jorgensen-fractals}) constructed from iterated function systems.  The material used to form these wavelets also gave rise to representations of ${\mathcal O}_N.$ 
 Recently, in \cite{dutkay-jorgensen-monic}, Dutkay and Jorgensen were able to relate representations of the Cuntz algebra  of ${\mathcal O}_N$ that they termed ``monic" to representations on $L^2$ spaces of other non-Euclidean spaces  carrying more locally defined branching operations related to dilations. The form that monic representations take has similarities to earlier representations of ${\mathcal O}_N$ coming from classical wavelet theory. 

Initially, the wavelet function or functions were made into an orthonormal basis by applying translation and dilation operators to a fixed family of functions, even in Dutkay and Jorgensen's inflated fractal space setting.  However,  already the term ``wavelet" had come  to have a broader meaning as being a function or finite collection of functions on a measure space $(X,\mu)$ that could be used to construct either an orthonormal basis or frame basis of $L^2(X,\mu)$ by means of operators connected to algebraic or geometric information relating to $(X,\mu).$

In 1996, A. Jonsson described collections of functions on certain finite fractal spaces that he defined as wavelets, with the motivating example being Haar wavelets restricted to the Cantor set \cite{jonsson}.  Indeed, Jonsson had been inspired by the fact that the Haar wavelets, which are discontinuous on $[0,1],$ are in fact continuous when restricted to the fractal Cantor set, and therefore can be viewed as a very well-behaved orthonormal basis giving a great deal of information about the topological structure of the fractal involved. Also in 1996, just slightly before Jonsson's work,  R. Strichartz analyzed wavelets on Sierpinski gasket fractals in \cite{Strichartz2}, and noted that since fractals built up from affine iterated function systems such as the Sierpinski gasket fractal had locally defined translations, isometries and dilations, they were good candidates for an orthonormal basis of wavelets.  Strichartz's wavelets were defined by constructing an orthonormal basis from functions that at each stage of the iteration building the fractal  had certain properties (such as local constance) holding in a piecewise fashion (\cite{Strichartz2}).

 With Jonsson's and Strichartz's constructions in mind, but starting from an operator-algebraic viewpoint, in 2011, M. Marcolli and A. Paolucci looked at representations of the Cuntz-Krieger $C^{\ast} $-algebras $\mathcal{O}_A$ on certain $L^2$-spaces, and showed that one could construct generalized ``wavelet" families, by using the isometries and partial isometries naturally generating the $C^{\ast}$-algebras $\mathcal{O}_A$ to operate on the zero-order and first-order scaling functions and wavelet functions, thus providing  the orthonormal basis for the Hilbert space in question.  The Cuntz-Krieger $C^{\ast}$-algebras $\mathcal{O}_A$ are $C^{\ast}$-algebras generated by partial isometries, where the relations between the isometries are determined by the matrix $A$; interpreting the matrix $A$ as the adjacency matrix of a graph allows us to view the Cuntz-Krieger $C^*$-algebras as graph algebras.  
 Thus, in the wavelet constructions of Marcolli and Paolucci, the partial isometries coming from the graph algebra act in a sense similar to the localized dilations and isometries observed by Strichartz in \cite{Strichartz2}. More precisely, by showing that it was possible to represent certain Cuntz-Krieger $C^{\ast}$-algebras on  $L^2$-spaces associated to 
  non-inflated fractal spaces, Marcolli and  Paolucci related the work of Bratteli and Jorgensen and Dutkay and Jorgensen to the works of Jonsson and Strichartz. Moreover, they showed that in this setting, certain families related to Jonsson's wavelets could be constructed by acting on the so-called scaling functions and wavelets by 
  partial isometries geometrically related to the directed graph in question.

In this paper, in addition to giving a broad overview of this area, we will discuss several new ways to generalize these results and obtain wavelets associated to representations of  directed graphs and higher-rank graphs.   For a given directed graph $E$, the graph $C^*$-algebra $C^*(E)$ is the universal $C^*$-algebra generated by a collection of projections associated to the vertices and partial isometries associated to the edges that satisfy certain relations, called the Cuntz-Krieger relations. It has been shown that graph $C^*$-algebras not only generalize Cuntz-Krieger algebras, 
 but they also  include (up to Morita equivalence) a fairly wide class of $C^*$-algebras such as the AF-algebras, Kirchberg algebras with free $K_1$-group, and various noncommutative algebras of functions on quantum spaces. One of the benefits of studying graph $C^*$-algebras is that  very abstract properties of $C^*$-algebras can be visualized via concrete characteristics of underlying graphs.  See the details in the book ``Graph Algebras''  by Iain Raeburn \cite{Raeburn} and the references therein.

Higher-rank graphs, also called $k$-graphs, were introduced in \cite{KP} by Kumjian and Pask as higher-dimensional analogues of directed graphs, and they provide a combinatorial model to study the higher dimensional Cuntz-Krieger algebras of Robertson and Steger \cite{RS-London, RS}. Since then, $k$-graph $C^*$-algebras have been studied by many authors and have provided many examples of various classifiable $C^*$-algebras, and the study of fine structures and invariants of $k$-graph $C^*$-algebras can be found in \cite{E, PRRS, RSY1, robertson-sims, RoS2, S, KangPask, ckss, SZ, SP}. Also $k$-graph $C^*$-algebras provide many examples of non-self-adjoint algebras and examples of crossed products. (See  \cite{DPY, DY, KrP, Po, BR, Ex, FPS}).  Recently, twisted $k$-graph $C^*$-algebras have been developed in \cite{KPS-hlogy, KPS-twisted, KPS-twisted-BD, KPS-twisted-K-theory, SWW}; these provide many important examples of $C^*$-algebras including noncommutative tori.
Moreover, specific examples of dynamical systems on $k$-graph $C^*$-algebras 
have been studied in \cite{paskraebuweaver-periodic, paskraebuweaver-family}, and the study of KMS states  with gauge dynamics can be found in \cite{aHLRS1, aHLRS2, aHLRS3, aHLRS4, aHKR}. Furthermore, the works in \cite{PR, PRS} show that $k$-graph $C^*$-algebras can be realized as noncommutative manifolds and have the potential to enrich the study of noncommutative geometry.

 The first examples of directed graph algebras are the Cuntz $C^{\ast}$-algebras ${\mathcal O}_N$ defined for any integer $N\geq 2$, which are generated by  $N$ isometries satisfying some elementary relations.  In the late 1990's it was realized by Bratteli and Jorgensen (\cite{BJA}, \cite{BJ1}) that the theory of multiresolution analyses for wavelets and the theory of certain representations of ${\mathcal O}_N$ could be connected through filter functions, or quadrature mirror filters, as they are sometimes called.  We review this relationship in Section 2, since this was the first historical connection between wavelets and $C^{\ast}$-algebras.  In this section, we also relate the filter functions associated to fractals coming from affine iterated function systems, as first defined by Dutkay and Jorgensen in \cite{dutkay-jorgensen-fractals}, as well as certain kinds of representations of ${\mathcal O}_N$ defined by Bratteli and Jorgensen called {\it monic} representations, as all three of these representations of ${\mathcal O}_N$ (those coming from \cite{BJ1}, from \cite{dutkay-jorgensen-fractals}, and  from \cite{dutkay-jorgensen-monic}) correspond to what we call a {\it Cuntz-like} family of functions on $\mathbb T.$ Certain forms of monic representations, when moved to $L^2$-spaces of Cantor sets 
associated to $\mathcal{O}_N$,
 can viewed as examples of semibranching function systems, and thus are precursors of the types of representations of Cuntz-Krieger algebras  
 studied by Marcolli and Paolucci in \cite{MP}.  In Section 3 we give an overview of the work of Marcolli and Paolucci from \cite{MP}, discussing semibranching function systems satisfying a Cuntz-Krieger condition and the representations of Cuntz-Krieger $C^{\ast}$-algebras on the $L^2$-spaces of fractals.  We state the main theorem of Marcolli and Paolucci from \cite{MP} on the construction of wavelets on these spaces, which generalizes the constructions of Jonsson and Strichartz, but we omit the proof of their theorem.  However, we give the proof that, given any Markov probability measure on the fractal space $K_N$ associated to $\mathcal{O}_N$, there exists an associated representation of $\mathcal{O}_N$ and a family of related wavelets.  
 In Section 4, we review the definition of directed graph algebras and also review $C^{\ast}$-algebras associated to finite higher-rank graphs (first defined by Kumjian and Pask in  \cite{KP}) and then generalize the notion of semibranching function systems to higher-rank graph algebras via the definition of $\Lambda$-semibranching function systems, first introduced in \cite{FGKP}.  In Section 5, we use the representations arising from $\Lambda$-semibranching function systems to construct wavelets of an arbitrary rectangular shape on the $L^2$-space of the infinite path space $\Lambda^{\infty}$ of any finite strongly connected $k$-graph $\Lambda.$  In so doing we generalize a main theorem from \cite{FGKP} and answer in the affirmative a question posed to one of us by Aidan Sims. In Section 6, motivated by work of Marcolli and Paolucci for wavelets associated to Cuntz-Krieger $C^{\ast}$-algebras, we discuss the use of  $k$-graph wavelets in the construction of (finite-dimensional) families of wavelets that can hopefully be used in traffic analysis on networks, and also discuss generalizations of wavelets on the vertex space of a $k$-graph that can be viewed as eigenvectors of the (discrete) Laplacian on this vertex space.  We analyze the wavelets and the wavelet transform in this case, thereby generalizing some results of Hammond, Vanderghynst, and Gribonval from \cite{hvrg}.
 
 This work was partially supported by a grant from the Simons Foundation (\#316981 to Judith Packer).


\section{$C^{\ast}$-algebras and work by Bratteli and  Jorgensen and Dutkay and Jorgensen on representations of ${\mathcal O}_N$}

We begin by giving a very brief overview of $C^{\ast}$-algebras and several important constructions in $C^{\ast}$-algebras that will prove important in what follows.  Readers interested in further detail can examine B. Blackadar's book \cite{black} (to give just one reference).

\begin{defn}  A $C^{\ast}$-algebra is a Banach algebra which we shall denote by ${\mathcal A}$ that has assigned to it an involution $\ast$ such that the norm of ${\mathcal A}$ satisfies the so-called  $C^*$-identity:
$$\|a^{\ast}a\|\;=\;\|a\|^{2},\;\forall a\in {\mathcal A}.$$  By a celebrated theorem of I. Gelfand and M. Naimark, every $C^{\ast}$-algebra can be  represented faithfully as a Banach $\ast$-subalgebra of the algebra of all bounded operators on a Hilbert space.
\end{defn}
$C^{\ast}$-algebras have a variety of important and very useful applications in mathematics and physics. $C^{\ast}$-algebras can be used to study the structure of topological spaces, as well as the algebraic and representation-theoretic structure of locally compact topological groups. Indeed, $C^{\ast}$-algebras provide one framework for a mathematical theory of quantum mechanics, with observables and states being described precisely in terms of self-adjoint operators and mathematical states on $C^{\ast}$-algebras.  When there are also symmetry groups involved in the physical system, the theory of $C^{\ast}$-algebras allows these symmetries to be incorporated into the theoretical framework as well.

In this paper, we will mainly be concerned with $C^{\ast}$-algebras constructed from various relations arising from directed graphs and higher-rank graphs.  These are combinatorial objects satisfying certain algebraic relations that are most easily represented by projections and partial isometries acting on a Hilbert space.  The $C^{\ast}$-algebras that we will study will also contain within them certain naturally defined commutative $C^{\ast}$-algebras,  as fixed points of a canonical gauge action. These commutative $C^{\ast}$-algebras can be realized as continuous functions on Cantor sets of various types, and under appropriate conditions there are measures on the Cantor sets, and associated representations of the $C^{\ast}$-algebras being studied on the $L^2$-spaces of the Cantor sets. These will be the representations that we shall study, but we will first briefly review the notion of $C^{\ast}$-algebras characterized by universal properties.  

\begin{defn}  (\cite{black})
Let ${\mathcal G}$ be a (countable) set of generators, closed under an involution $\ast,$ and ${\mathcal R}({\mathcal G})$ a set of algebraic relations on the elements of ${\mathcal G},$ which have as a restriction that it must be possible to realize ${\mathcal R}({\mathcal G})$ among operators on a Hilbert space $\mathcal{H}$. It is also required that ${\mathcal R}({\mathcal G})$ must place an upper bound on the norm of each generator when realized as an operator.
A \textbf{representation} $(\pi, {\mathcal H})$ of the pair $({\mathcal G}, {\mathcal R}({\mathcal G}))$ is a map $\pi:{\mathcal G}\to {\mathcal B}({\mathcal H})$ such that the collection $\{\pi(g):\;g\in {\mathcal G}\}$ satisfies all the relations of ${\mathcal R}({\mathcal G}).$  The smallest $C^{\ast}$-subalgebra of ${\mathcal B}({\mathcal H})$ containing $\{\pi(g):\;g\in {\mathcal G}\}$ is called a $C^{\ast}$-algebra that represents  $({\mathcal G}, {\mathcal R}({\mathcal G}));$ we denote this $C^{\ast}$-algebra by ${\mathcal A}_{\pi}.$  A representation $(\pi_{\mathcal U}, {\mathcal H}_{\mathcal U})$ of $({\mathcal G}, {\mathcal R}({\mathcal G}))$ is said to be {\textbf{universal}} and ${\mathcal A}_{\pi_{\mathcal U}}$ is called the \textbf{universal $C^{\ast}$-algebra} associated to $({\mathcal G}, {\mathcal R}({\mathcal G}))$ if for every representation $(\pi, {\mathcal H})$ of the pair $({\mathcal G}, {\mathcal R}({\mathcal G}))$ there is a $\ast$-homomorphism $\rho:{\mathcal A}_{\pi_{\mathcal U}}\to {\mathcal A}_{\pi}$ satisfying
$$\pi(g)=\;\rho\circ \pi_{\mathcal U}(g),\;\forall g\in {\mathcal G}.$$  The general theory found in Blackadar (\cite{black}) can be used to show that this universal $C^{\ast}$-algebra exists, and is unique (the bounded-norm condition is used in 
the existence proof).

\end{defn}
\begin{example}
Let ${\mathcal G}=\{u, u^{\ast}, v, v^{\ast}\}$ and fix $\lambda\in \mathbb T$ with $\lambda=e^{2\pi i\alpha},\;\alpha\in [0,1).$  Let ${\mathcal R({\mathcal G})}$ consist of the following three identities, where $I$ denotes the identity operator in $B(\H)$:
\begin{itemize}
\item [(1)] $uu^{\ast}=u^{\ast}u=I.$
\item [(2)] $vv^{\ast}=v^{\ast}v=I.$
\item [(3)] $uv=\lambda vu.$
\end{itemize}
We note that relations (1) and (2) together with the $C^{\ast}$-norm  condition force $\|u\|=\|v\|=1.$   Relation (3) implies the universal $C^{\ast}$-algebra involved is noncommutative, and our universal $C^{\ast}$-algebra in this case is the well-known noncommutative torus ${\mathcal A}_{\alpha}$.
\end{example}

\begin{example}
Fix $N>1$ and let ${\mathcal G}=\{s_0, s_0^{\ast},\cdots, s_{N-1}, s_{N-1}^{\ast}\}.$  Let ${\mathcal R({\mathcal G})}$ consist of the relations
\begin{itemize}
\item  [(1)] $s_i^{\ast}s_i=1,\;0\leq i\leq N-1.$
\item  [(2)] $s_i^{\ast}s_j=0,\;0\leq i\not = j\leq N-1.$
\item  [(3)] $s_1s_1^{\ast}+s_2s_2^{\ast}+\cdots +s_{N-1}s_{N-1}^{\ast}=I.$
\end{itemize}
Again the first collection of relations (1) implies that $\|s_i\|=1,\;0\leq i\leq N-1,$ and also imply that the $s_i$ will be isometries and the $s_i^{\ast}$ will be partial isometries, $0\leq i\leq N-1.$\footnote{Recall that an isometry in $B(\H)$ is an operator $T$ such that $T^*T = I$; a partial isometry $S$ satisfies $S = SS^*S$. A projection in  $B(\H)$ is an operator that is both self-adjoint and idempotent. }
The universal $C^{\ast}$-algebra constructed via these generators and relations was first discovered by J. Cuntz in the late 1970's.  Therefore it is called the Cuntz algebra and is commonly denoted by ${\mathcal O}_N.$

\end{example}

We now wish to examine several different families of representations of ${\mathcal O}_N$ that take on a related form on the Hilbert space $L^2(\mathbb T)$ where $\mathbb T$ is equipped with Haar measure.  These types of representations were first studied by  Bratteli and  Jorgensen in \cite{BJA} and \cite{BJ1}, who found that cetain of these representations could be formed from wavelet filter functions.  They also appear as representations coming from inflated fractal wavelet filters and the recently defined monic representations of  Dutkay and Jorgensen (\cite{dutkay-jorgensen-fractals} and \cite{dutkay-jorgensen-monic}, respectively).

We now discuss a common theme for all of the representations of ${\mathcal O}_N$ mentioned above, as was first done by Bratteli and Jorgensen in \cite{BJ1}.  Fix $N>1,$ and suppose a collection of $N$ essentially  bounded measurable functions $\{h_0,h_1,\cdots,h_{N-1}\} \subseteq L^\infty(\T)$ is given.  
We define   bounded operators $\{T_i\}_{i=0}^{N-1}$  on $L^2(\mathbb T)$ associated to the functions  $\{h_0,h_1,\cdots,h_{N-1}\}$ by 
\begin{equation}
\label{Cuntzlike}
(T_i\xi)(z)\;=\;h_i(z)\xi(z^N),
\end{equation}
and we ask the question:  when do the $\{T_i\}_{i=0}^{N-1}$ give a representation of ${\mathcal O}_N$ on $L^2(\mathbb T)$?

We first compute that  for $i\in \Z_N = \{0, 1, \ldots, N-1\}$, the adjoint of each $T_i$ is given by
\begin{equation}
\label{Cuntzlikeadjoint}
(T_i^{\ast}\xi)(z)\;=\;\frac{1}{N}\sum_{\omega\in \mathbb T: \omega^N=z}\overline{h_i(\omega)}\xi(\omega).
\end{equation}

If we denote the $N$ (measurable) branches of the $N^{\text{th}}$ root function by
$\tau_j:\mathbb T\to \mathbb T$, where
$$\tau_j(z=e^{2\pi it}) =e^{\frac{2\pi i(t+j)}{N}},\;t\in [0,1)\;\;\text{and}\;\; \;j\in\mathbb Z_N,$$
then we can rewrite our formula for $T_i^{\ast}$ as:
\begin{equation}
\label{Cuntzlikeadjoint2}
(T_i^{\ast}\xi)(z)\;=\;\frac{1}{N}\sum_{j=0}^{N-1}\overline{h_i(\tau_j(z))}\xi(\tau_j(z)). 
\end{equation}
(Note we have chosen specific branches for the $N^{\text{th}}$ root functions, but in our formula
for the adjoint $T_i^{\ast}$
we could have taken any measurable branches and obtained the same result.)

We now give necessary and sufficient conditions on the functions $\{h_0,h_1,\dots,h_{N-1}\},$ as stated in \cite{BJ1}, that the $\{T_i\}_{i=0}^{N-1}$  generate a representation of ${\mathcal O}_N.$
\begin{proposition}
\label{PropCuntzrep}
Fix $N>1,$ let $\{h_i\}_{i=0}^{N-1}\subset L^{\infty}(\mathbb T)$ and define $\{T_i\}_{i=0}^{N-1}$ as in Equation \eqref{Cuntzlike}.
Then the operators $\{T_i\}_{i=0}^{N-1}$ give a representation of the Cuntz algebra if and only if the map
\begin{equation}
\label{matrixfilt2}
z\;\mapsto\;\left(\frac{h_i\left(ze^{\frac{2\pi i j}{N}}\right)}{\sqrt{N}}\right)_{0\leq i,j \leq N-1}
\end{equation}
is a map from $\mathbb T$ into the unitary $N\times N$ matrices for almost all $z\in \mathbb T.$
\end{proposition}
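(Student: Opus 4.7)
The plan is to translate each of the three Cuntz relations defining $\mathcal{O}_N$ into a linear-algebraic condition on the matrix
\[
M(z) \;:=\; \left(\frac{h_i(ze^{2\pi ij/N})}{\sqrt{N}}\right)_{0\le i,j \le N-1}
\]
and then observe that, because $M(z)$ is a finite square matrix, together they amount to nothing more than the a.e.\ unitarity of $M(z)$. I expect the isometry/orthogonality relations $T_i^* T_j = \delta_{ij} I$ to correspond to one side of unitarity ($MM^*=I$) and the completeness relation $\sum_i T_i T_i^* = I$ to correspond to the other side ($M^*M=I$), with finite-dimensional linear algebra then tying them together.

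For relations (1) and (2), I would compute $(T_i^* T_j \xi)(z)$ by first applying \eqref{Cuntzlike} and then \eqref{Cuntzlikeadjoint2}. The factor $\xi(\tau_k(z)^N) = \xi(z)$ pulls out, producing
\[
(T_i^* T_j \xi)(z) \;=\; \left(\frac{1}{N}\sum_{k=0}^{N-1} \overline{h_i(\tau_k(z))}\, h_j(\tau_k(z))\right)\xi(z),
\]
so $T_i^* T_j$ is the multiplication operator by a specific function $g_{ij}(z)$. The relations $T_i^* T_j = \delta_{ij} I$ then become $g_{ij}(z) = \delta_{ij}$ a.e. Substituting $w = \tau_0(z)$, so $\tau_k(z) = we^{2\pi ik/N}$ and $w$ ranges over $\mathbb{T}$ as $z$ does, this is precisely the statement that the rows of $M(w)$ are orthonormal in $\mathbb{C}^N$, i.e.\ $M(w)M(w)^* = I$ a.e.

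For relation (3), a direct calculation, using that the $N$-th roots of $z^N$ are $\{ze^{2\pi ik/N}\}_{k=0}^{N-1}$, gives
\[
(T_i T_i^* \xi)(z) \;=\; \frac{h_i(z)}{N} \sum_{k=0}^{N-1} \overline{h_i(ze^{2\pi ik/N})}\, \xi(ze^{2\pi ik/N}).
\]
Summing over $i$ and separating the $k=0$ contribution from the $k\ne 0$ contributions, the identity $\sum_i T_i T_i^* = I$ becomes the pair of conditions $\sum_i |h_i(z)|^2 = N$ and $\sum_i h_i(z)\overline{h_i(ze^{2\pi ik/N})} = 0$ for $k\ne 0$, which is exactly $M(z)^* M(z) = I$ a.e. Since $M(z)$ is an $N\times N$ matrix, $MM^* = I \iff M^*M = I \iff M$ is unitary, so the three Cuntz relations are simultaneously equivalent to the matrix \eqref{matrixfilt2} being unitary a.e.

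The main obstacle is the bookkeeping in the $\sum_i T_i T_i^*$ computation: one must correctly enumerate the preimages of $z$ under $w\mapsto w^N$, rewrite the resulting sum of multiplications by shifted copies of $\xi$, and match the diagonal and off-diagonal terms with the entries of $M^*M$. Once that is carried out carefully, and the substitution $w = \tau_0(z)$ in the first computation is made explicit, everything reduces to elementary linear algebra on finite square matrices and an appeal to the equivalence of left- and right-invertibility in that setting.
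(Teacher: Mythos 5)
Your argument is correct, and it supplies precisely the computation that the paper itself omits: the paper's ``proof'' of Proposition \ref{PropCuntzrep} is only a citation to Section 1 of \cite{BJ1}, and what you have written is essentially the standard Bratteli--Jorgensen verification. Your identification of $T_i^*T_j$ as multiplication by $\frac{1}{N}\sum_k \overline{h_i(\tau_k(z))}h_j(\tau_k(z))$ and of $\sum_i T_iT_i^*$ as a combination of multiplications composed with rotations by $N$-th roots of unity is exactly right, as is the final appeal to the equivalence of $MM^*=I$ and $M^*M=I$ for finite square matrices. Two small steps deserve to be made explicit if you write this up. First, in the completeness relation, to conclude from $\sum_{k=0}^{N-1} g_k(z)\,\xi(ze^{2\pi ik/N})=\xi(z)$ for all $\xi\in L^2(\mathbb T)$ that $g_0=1$ and $g_k=0$ a.e.\ for $k\neq 0$, you should test against the characters $\xi(z)=z^n$ for $n=0,\dots,N-1$ and invert the resulting DFT (Vandermonde) matrix; the reverse implication is immediate. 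Second, the pointwise conditions you extract from $\sum_i T_iT_i^*=I$ only assert that the $0$-th column of $M(z)$ is orthogonal to the others and has norm one; to get full column-orthonormality (i.e.\ $M(z)^*M(z)=I$ for a.e.\ $z$) you replace $z$ by $ze^{2\pi ij/N}$ and use that the rotation preserves Haar measure, so an a.e.\ condition at $z$ is an a.e.\ condition at $ze^{2\pi ij/N}$. With those two points spelled out, the proof is complete.
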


\begin{proof}
See Section 1 of Bratteli and Jorgensen's seminal paper \cite{BJ1} for more details on this.
\end{proof}

The above proposition motivates the next definition:

\begin{defn}
\label{defCuntzlike}
Let $\{h_j\}_{j=0}^{N-1}$ be a subset of $L^{\infty}(\mathbb T).$
We say that this family is a {\bf Cuntz-like family} if the matrix of Equation \eqref{matrixfilt2} is unitary for almost all $z\in \mathbb T.$
\end{defn}

Bratteli and Jorgensen were the first to note, in \cite{BJA}, that certain wavelets on $L^2(\mathbb R),$ the so-called multiresolution analysis wavelets, could be used to construct 
representations of the Cuntz algebra ${\mathcal O}_N,$ by examining the filter function families, and showing that they were ``Cuntz-like.''  Their representations used low- and high-pass filters associated to the wavelets to construct the related isometries as above. Filter functions on the circle $\mathbb T$ are used to define wavelets in the frequency domain (see \cite{Strichartz1} for an excellent exposition). We thus give our initial definitions of ``dilation-translation wavelet families" in the frequency domain rather than the time domain.  We note that we restrict ourselves to integer dilations on $L^2(\mathbb R);$  more general dilation matrices giving rise to unitary dilations on $L^2(\mathbb R^d)$ are described in the Strichartz article  \cite{Strichartz1}.

Fix an integer $N>1.$  Define the operator $D$ of dilation by $N$ on $L^2(\mathbb R)$ by:
\[
D(f)(t)=\sqrt{N}f(Nt)\;\;\text{for}\;\; f\in L^2(\mathbb R). 
\]
and define the translation operator $T$ on $L^2(\mathbb R)$ by
$$T(f)(t)=f\left(t-v\right)\;\;\text{for}\;\; f\in L^2(\mathbb R),\;\text{and let}\;T_v=[T]^v,\;v\in \mathbb Z.$$
 Let ${\mathcal F}$ denote the Fourier transform on $L^2\left(\mathbb R\right)$.
Set
$${\widehat{D}}={\mathcal F}D{\mathcal F}^*\;\;\text{and}\;\;\widehat{T}={\mathcal F}T{\mathcal F}^*.$$
Then
$$\widehat{D}(f)(x)=\frac{1}{\sqrt{N}}\,f\big(\frac{x}{N}\big)\;\; \text{and}\;\; \widehat{T}\left(f\right)\left(x\right)=e^{-2\pi ix}f\left(x\right)\;\;\text{for}\;\;f\in L^2\left(\mathbb R\right).$$

\begin{defn}
A {\bf {wavelet family in the frequency domain}} for dilation by $N>1$ is a subset $\{\Phi\}\cup\{\Psi_1,\cdots,\Psi_m\}\subseteq L^2\left(\mathbb R\right)$ such that
\begin{equation}
\label{eq:parental-wavelet-family}
\{\widehat{T}_v\left(\Phi\right):\;  v\in\mathbb Z\}\cup\{{\widehat{D}}^j \widehat{T}_v\left(\Psi_i\right):\;1\leq i \leq m,\;j\in\mathbb N,\;v\in\mathbb Z\}
\end{equation}
is an orthonormal basis for $L^2\left(\mathbb R\right)$. If $m = N-1$ and the set \eqref{eq:parental-wavelet-family}  
is an orthonormal basis for $L^2\left(\mathbb R\right)$, the family $\{\Phi\}\cup\{\Psi_1,\cdots,\Psi_{N-1}\}$ is called an {\bf orthonormal wavelet family} for dilation by $N$.
\end{defn}

In other words,  wavelet families are finite subsets of the unit ball of a Hilbert space $L^2(\mathbb R)$ that, when acted on by specific operators (in this case unitary operators corresponding to dilation and translation), give rise to a basis for the Hilbert space.

A fundamental algorithm for constructing  wavelet families is the concept of multiresolution analysis (MRA) developed by  Mallat and  Meyer in \cite{mallat}, and key tools for constructing the MRA's are filter functions for dilation by $N$.
\begin{defn}
\label{lowpassfilt}
Let $N$ be a positive integer greater than $1$.  A {\bf low-pass filter} $m_0$ for dilation by $N$ is a function  $m_0:\;\mathbb T\;\rightarrow\;\mathbb C$ which satisfies the following conditions:
\renewcommand{\labelenumi}{(\roman{enumi})}
\begin{enumerate}
\item $m_0\left(1\right)\;=\;\sqrt{N}$ (``low-pass condition'')
\item $\sum_{\ell=0}^{N-1}|m_0\left(ze^{\frac{2\pi i \ell}{N}}\right)|^2=N$ a.e.;
\item $m_0$ is H\"{o}lder continuous at $1;$
\item [ (iv) (Cohen's condition)] $m_0$ is  non-zero in a sufficiently large neighborhood of $1$ (
e.g. it is sufficient that $m_0$ be nonzero on the image of $[-\frac{1}{2N},\frac{1}{2N}]$ under the exponential map from $\mathbb R$ to $\mathbb T$).
\end{enumerate}
\end{defn}
Sometimes in the above definition, condition (iv) Cohen's condition is dropped and thus {\bf frame wavelets} are produced instead of orthonormal wavelets; these situations can be studied further in Bratteli and Jorgensen's book  \cite{BJ}.

Given a low-pass filter $m_0$ for dilation by $N$, we can naturally view $m_0$ as a $\mathbb Z$-periodic function on $\mathbb R$ by identifying $\mathbb T$ with $[0,1)$ and extending $\mathbb Z$-periodically.  Then there is a canonical way to construct a ``scaling function'' associated to the filter $m_0$. We set
$$\Phi\left(x\right)\;=\prod_{i=1}^{\infty}\left[\frac{m_0\left(N^{-i}\left(x\right)\right)}{\sqrt{N}}\right].$$

Then the infinite product defining $\Phi$ converges a.e. and gives an element of $L^2\left(\mathbb R\right)$. We call $\Phi$ a {\bf  scaling function in the frequency domain} for dilation by $N$. (The function ${\mathcal F}^{-1}(\Phi)=\phi$ is the scaling function in the sense of the original definition.)

Given a low-pass filter $m_0$ and the associated scaling function $\Phi$ for dilation by $N$, then if we have $N-1$ other functions defined on $\mathbb T$ which satisfy appropriate conditions described in the definition that follows,  we can construct the additional members of a wavelet family for dilation by $N$. 
\begin{defn}
Let $N$ be a positive integer greater than $1$, and let  $m_0$  be a  low-pass filter for dilation by $N$ satisfying all the conditions of Definition \ref{lowpassfilt}. A set of essentially bounded  measurable $\mathbb Z$-periodic functions $m_1,m_2,\cdots,m_{N-1}$ defined on $\mathbb R$ are called {\bf high-pass filters} associated to $m_0$,
if
$$\sum_{\ell=0}^{N-1}{\overline{m_i\left(ze^{\frac{2\pi i \ell}{N}}\right)}}m_j\left(ze^{\frac{2\pi i \ell}{N}}\right)\;=\;\delta_{i,j}N\;\;\text{for}\;\; 0\leq\;i,j\;\leq N-1.$$
\end{defn}

Given a low-pass filter $m_0,$ it is always possible to find {\it measurable} functions $m_1,m_2,\cdots,m_{N-1}$ that serve as high-pass filters to $m_0.$
The functions $m_1,\; m_2,\cdots, m_{N-1}$ can then be seen as $\mathbb Z$-periodic functions on $\mathbb R$ as well.
The connection between filter functions and wavelet families was provided by  Mallat and  Meyer for $N=2$ in \cite{mallat} and then extended to more general dilation matrices.  We consider only integer dilations $N>1,$ and rely on the exposition of both  Strichartz \cite{Strichartz1} and Bratteli and Jorgensen (\cite{BJ}) in the material that follows below:
\begin{thm} \textup{(}
\cite{mallat}, \cite{Strichartz1} Section 1.5, \cite{BJ}\textup{)}
Let $N$ be a positive integer greater than $1$, let
$\left(m_0,m_1,\cdots, m_{N-1}\right)$ be a classical system of low and associated high-pass filters for dilation by $N$, where $m_0$ satisfies all the conditions of Definition \ref{lowpassfilt},
 and let
$\Phi$ be the scaling function in the frequency domain constructed from $m_0$ as above.  Then
\begin{equation}
\label{eq:filters&wavelets}
\{\Phi\}\;\bigcup\;\{ \Psi_1=\widehat{D}\left(m_1\Phi\right),\;\Psi_2=\widehat{D}\left(m_2\Phi\right),\;\cdots,\;\Psi_{N-1}=\widehat{D}\left(m_{N-1}\Phi\right)\}
\end{equation} 
is an orthonormal wavelet family in the frequency domain for dilation by $N$. The wavelets $\{ \Psi_1,\;\Psi_2,\;\cdots,\;\Psi_{N-1}\}$ are called the ``wavelets" in the frequency domain for dilation by $N$. If Cohen's condition is satisfied, the family \eqref{eq:filters&wavelets} is an orthonormal wavelet family. (Again, the functions $\{\psi_1={\mathcal F}^{-1}(\Psi_1), \psi_2={\mathcal F}^{-1}(\Psi_2),\cdots, \psi_{N-1}={\mathcal F}^{-1}(\Psi_{N-1})\}$ form the ``wavelets" in the original sense of the definition.)
\end{thm}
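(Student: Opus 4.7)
My plan is to verify the theorem by establishing the standard multiresolution analysis (MRA) structure generated by $\Phi$, then showing the $\Psi_i$ span the wavelet complement. All arguments will take place on the Fourier side, where the dilation and translation operators act as in the excerpt.

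First, I would derive the scaling identity $\Phi(x) = \tfrac{m_0(x/N)}{\sqrt N}\Phi(x/N)$ directly from the infinite product definition of $\Phi$; this identity is the engine behind everything. Convergence of the infinite product a.e.\ and the fact that $\Phi\in L^2(\R)$ follow from the low-pass normalization $m_0(1)=\sqrt N$, the H\"older condition at $1$, and the quadrature-mirror identity (ii) of Definition \ref{lowpassfilt} (this is a classical Mallat/Meyer estimate using $|\Phi|^2 \le 1$ a.e.\ on a neighborhood of $0$ and exponential decay under iteration).

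Next, I would establish that $\{\widehat T_v \Phi : v\in \Z\}$ is an orthonormal set in $L^2(\R)$. Using Plancherel and the Poisson-summation-style identity $\langle \widehat T_v \Phi, \widehat T_w \Phi\rangle = \int_0^1 e^{2\pi i(w-v)\xi}\bigl(\sum_{k\in \Z} |\Phi(\xi+k)|^2\bigr)\,d\xi$, the claim reduces to showing $P(\xi):=\sum_k |\Phi(\xi+k)|^2 \equiv 1$. Here the QMF condition (ii) on $m_0$ gives a functional equation $P(\xi) = \tfrac{1}{N}\sum_{\ell=0}^{N-1}|m_0(\tfrac{\xi+\ell}{N})|^2 P(\tfrac{\xi+\ell}{N})$, so $P$ is a fixed point of a Ruelle-type transfer operator. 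Cohen's condition (iv) together with continuity of $m_0$ at $1$ forces the unique bounded fixed point to be the constant $1$; this is the main technical obstacle and the place where Cohen's condition is essential.

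With orthonormality of integer translates in hand, I set $V_0 = \overline{\sp}\{\widehat T_v \Phi\}$ and $V_j = \widehat D^{-j}(V_0)$. The scaling identity yields $V_j \subset V_{j+1}$; the low-pass/Cohen hypotheses give $\overline{\bigcup V_j} = L^2(\R)$ (since $|\Phi|$ tends pointwise to $\mathbf 1_{[-1/2,1/2]}$-like behavior under repeated dilation, so projections onto $V_j$ converge strongly to the identity) and $\bigcap V_j = \{0\}$ (by dominated convergence against any compactly Fourier-supported test function). This is exactly an MRA in the frequency domain.

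Finally, set $W_0 = V_1 \ominus V_0$ and consider the candidate ONB $\{\widehat T_v \Psi_i : v\in\Z,\ 1\le i\le N-1\}$ of $W_0$. The high-pass condition
\[
\sum_{\ell=0}^{N-1}\overline{m_i(ze^{2\pi i\ell/N})}\, m_j(ze^{2\pi i\ell/N}) = \delta_{ij}N
\]
is precisely the statement that the matrix of Equation \eqref{matrixfilt2} built from $(m_0,\dots,m_{N-1})$ is unitary a.e., i.e.\ that $(m_0,\dots,m_{N-1})$ is a Cuntz-like family in the sense of Definition \ref{defCuntzlike}. Computing inner products $\langle \widehat T_v \Psi_i, \widehat T_w \Psi_j\rangle$ and $\langle \widehat T_v \Psi_i, \widehat T_w \Phi\rangle$ via Plancherel and using the scaling identity reduces them, after the change of variable $\xi\mapsto N\xi$ and folding the sum modulo $1$, to sums of the form $\sum_{\ell}\overline{m_i(\cdot\, e^{2\pi i\ell/N})}m_j(\cdot\, e^{2\pi i\ell/N})$; unitarity of the matrix makes these collapse to $\delta_{ij}\delta_{vw}$ and $0$ respectively. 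Counting dimensions (or using that unitary matrices preserve $L^2$-norm) then shows the $\widehat T_v\Psi_i$ span all of $W_0$. Combining with $L^2(\R) = \bigoplus_{j\in\Z} \widehat D^{-j} W_0 \oplus V_0$ completes the proof that \eqref{eq:filters&wavelets} is an orthonormal wavelet family.
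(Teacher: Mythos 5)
The paper does not actually prove this theorem: it is stated as a citation to Mallat, Strichartz, and Bratteli--Jorgensen, and the remark immediately following it records that the proof the authors have in mind (Bratteli--Jorgensen's) proceeds by building a representation of the Cuntz algebra $\mathcal{O}_N$ on $L^2(\mathbb{T})$ from the filter system $(m_0,\dots,m_{N-1})$ and exploiting the Cuntz relations $S_j^*S_i=\delta_{ij}I$, $\sum_i S_iS_i^*=I$ to produce the orthogonal decomposition of $L^2(\mathbb{R})$. Your outline instead follows the classical Mallat--Meyer--Strichartz route: scaling identity from the infinite product, orthonormality of translates via the periodization $P(\xi)=\sum_k|\Phi(\xi+k)|^2$ and the transfer-operator fixed-point argument, the MRA ladder $V_j$, and a fiber-wise unitarity computation identifying $W_0=V_1\ominus V_0$ with the span of the $\widehat{T}_v\Psi_i$. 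Both routes are valid and in fact rest on the same algebraic core --- your observation that the high-pass conditions say exactly that the matrix in Equation \eqref{matrixfilt2} is unitary a.e.\ is precisely the ``Cuntz-like'' condition of Definition \ref{defCuntzlike}, so the fiber-wise step of your argument is the commutative shadow of the Cuntz-relation step in Bratteli--Jorgensen. What the operator-algebraic route buys is that the orthogonality of the spaces $\widehat{D}^jW_0$ across scales falls out of $S_i^*S_j=\delta_{ij}I$ with no further Fourier computation; what your route buys is that it makes transparent exactly where Cohen's condition enters (forcing $P\equiv 1$ rather than $P\le 1$, i.e.\ orthonormal basis rather than normalized tight frame), which matches the paper's remark that dropping condition (iv) yields only frame wavelets. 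One caution: your phrase ``the unique bounded fixed point'' of the Ruelle operator is loose --- that operator can have a fixed-point space of dimension greater than one in general, and the correct statement is that Cohen's condition forces the particular fixed point $P$ to be the constant $1$; this is the genuinely delicate step, but you have located it correctly.
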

\begin{rmk}
It follows that filter systems are very important in the construction of wavelets arising from a multiresolution analysis.
In their proof of the result above, Bratteli and Jorgensen used a representation of the Cuntz algebra ${\mathcal O}_N$ arising from the filter system.
\end{rmk}

It is then clear the filter conditions expressed as above can just be formulated as stating that the functions $\{m_0,m_1,\cdots, m_{N-1}\}$ can be used to construct the following 
 function mapping $z\in\mathbb R/\mathbb Z\cong\mathbb T$ into the $N\times N$ unitary matrices over $\mathbb C$, given by the formula
\begin{equation}
\label{matrixfilt}
z\;\mapsto\;\left(\frac{m_j\left(ze^{\frac{2\pi i \ell}{N}}\right)}{\sqrt{N}}\right)_{0\leq j,\ell \leq N-1},
\end{equation}
and therefore give a Cuntz-like family in the sense of Definition \ref{defCuntzlike}.
\newline\newline
As noted earlier, Bratteli and Jorgensen proved that the operators $\{ S_i\}_{i=0}^{N-1}$ defined on $L^2(\mathbb T)$
by
\begin{equation}
(S_i\xi)(z)\;=\;m_i(z)\xi(z^N),
\end{equation}
for $\xi\in L^2(\mathbb T),\; z \in\mathbb T\;\;\text{and}\;\; i = 0, 1,\cdots , N-1,$ satisfy the relations
\begin{equation}
S_j^{\ast} S_i = \delta_{i,j}I,
\end{equation}
\begin{equation}
\sum_{i=0}^{N-1}S_iS_i^{\ast}\;=\;I,
\end{equation} which we saw in Proposition \ref{PropCuntzrep}; and thus we obtain exactly the Cuntz relations for the Cuntz algebra ${\mathcal O}_N.$

This gives the {\bf Bratteli-Jorgensen mapping} from  a wavelet family $\{\Phi\}\,\bigcup\,\{\Psi_1,\cdots,\Psi_{N-1}\}$ in $L^2(\mathbb R)$ arising from a multiresolution analysis into a representations of ${\mathcal O}_N.$

We now recall the inflated fractal wavelets of  Dutkay and  Jorgensen \cite{dutkay-jorgensen-fractals}, which also have a multiresolution analysis structure, and therefore also have related generalized filter functions that will satisfy Definition \ref{defCuntzlike} and a weakened low-pass condition. Thus, these filter functions will also give rise to representations of ${\mathcal O}_N$ on $L^2(\mathbb T)$.  We review here only the case where the fractals embed inside $[0,1],$ although the work in \cite{dutkay-jorgensen-fractals} generalizes to fractals sitting inside $[0,1]^d$ constructed from affine iterated function systems.  We note that a fine survey of the relationship between quadrature mirror filters of all types and representations of ${\mathcal O}_N$ can be found in the recent paper \cite{DPS}.

Fix an integer $N>1$.  Recall that  $\Z_N= \{0, 1, \cdots, N-1\}$; let $B\subset \Z_N$ be a proper  subset of $\Z_N.$ Recall from \cite{hutch} that there is a unique fractal set ${\bf F}\subset [0,1]$ satisfying
$${\bf F}=\bigsqcup_{i\in B}(\frac{1}{N}[{\bf F}+i]).$$
The Hausdorff dimension of ${\bf F}$ is known to be  $\log_N(|B|)$ (\cite{hutch}, Theorem 1 of Section 5.3).

\begin{defn}(\cite{dutkay-jorgensen-fractals})  Let $N,\; B\subset \mathbb N,$ and ${\bf F}$ be as described above.
We define the {\bf inflated fractal set} ${\mathcal R}$ associated to ${\bf F}$ by:
 $$\mathcal R=\bigcup_{j\in\mathbb Z}\,\bigcup_{v\in\mathbb Z}N^{-j}({\bf F}+v).$$  The Hausdorff measure $
 \mu$ of dimension $\log_N(|B|)$, restricted to $\mathcal R\subset \mathbb R,$ gives a Borel measure on ${\mathcal R}$,  but it is not a Radon measure on  ${\mathcal R},$ because bounded measurable subsets of ${\mathcal R}$ need not have finite $\mu$-measure.  A dilation operator $D$ and translation operators $\{T_v: v\in \mathbb Z\}$ on $L^2(\mathcal R,\mu)$ are defined as follows:  for $f\;\in\;L^2(\mathcal R,\mu)$,
$$D(f)(x)=\sqrt{|B|}f(Nx),\;$$
$$T_{v}(f)(x)=f(x-v).$$
\end{defn}

There is a natural multiresolution analysis (MRA) structure on $L^2({\mathcal R},\mu)$, which can be described as follows.  We define a scaling function or ``father wavelet" $\phi$ by  $\phi=\chi_{\bf F}$.   Translates of $\phi$ are orthonormal, and we define the core subspace $V_0$ of the MRA to be the closure of their span,
\[V_0 = \overline{\text{span}} \{ T_v(\phi): v \in \Z\}.\]
 For $j\in \Z$, set $V_j=D^j(V_0)$. It was shown in Proposition 2.8 of \cite{dutkay-jorgensen-fractals} (using slightly different notation) that $\bigcup_{j\in \mathbb Z}V_j$ is dense in $L^2({\mathcal R},\mu)$ and
$\bigcap_{j\in\mathbb Z} V_j=\{0\}.$  The inclusion $V_j\subset V_{j+1}$ follows from the fact
that
\begin{equation}
\label{refine}
\phi = \frac1{\sqrt{|B|}}\sum_{i\in B} DT_{i}(\phi).
\end{equation}
We note that the refinement equation (\ref{refine}) above gives a {\it weakened} low-pass filter for dilation by $N$,
defined by $h_0(z)=\sum_{i\in B}\frac1{\sqrt K}z^{i}$  for $z\in\mathbb T$.  It is weakened in that  conditions $(i)$ and $(iv)$  of Definition \ref{lowpassfilt} will not be satisfied in general, but it will satisfy
$$\sum_{\{w:\;w^N=z\}}|h_0(w)|^2=N\;\;\text{for $z\in \mathbb T$,}$$
and $h_0(z)$ will be non-zero in a neighborhood of $z=1.$
Using linear algebra, it is then possible to find $N-1$ corresponding ``high-pass" filters $\{h_1,h_2,\cdots, h_{N-1}\}$ defined as Laurent polynomials in $z$ (see Theorem 3.4 of \cite{DMP} for details) such that
the condition of Definition~\ref{defCuntzlike} is satisfied for the family $\{h_0,h_1, \cdots, h_{N-1}\},$ and one thus obtains a representation of ${\mathcal O}_N$ to go along with the wavelet family.
Moreover, the {high-pass filters} $\{h_1, \cdots, h_{N-1}\}$ are constructed in such a way to allow one to construct a subset $\{\psi_1, \psi_2, \cdots, \psi_{N-1}\}$ of $W_0=V_1\ominus V_0$ that serves as the generalized wavelet family for $L^2({\mathcal R}, \nu)$ in the sense that
$$\{D^jT_v(\psi_i):\; 1\leq i\leq N-1\;\;\text{and}\;\; j,v\in \mathbb Z\}$$
form an orthonormal basis for $L^2({\mathcal R}, \nu).$  See \cite{dutkay-jorgensen-fractals} and \cite{DMP} for further details on this construction.

Finally we wish to briefly discuss on the relationship of the above representations of ${\mathcal O}_N$ coming from Cuntz-like families of functions to the {\it monic} representations of ${\mathcal O}_N$ defined by Dutkay and Jorgensen in \cite{dutkay-jorgensen-monic}.

 Fix an integer $N>1.$ Let $K_N$ denote the infinite product space $\prod_{j=1}^{\infty}\mathbb Z_N,$ which has the topological structure of the Cantor set.
Denote by $\sigma$ the one-sided shift on $K_N:$
\begin{equation}\label{eq:sigma_KN}
\sigma\left((i_j)_{j=1}^{\infty}\right )= (i_{j+1} )_{j=1}^{\infty}
\end{equation}
and  let $\sigma_k,\;k\in \mathbb Z_N$ denote the inverse branches to $\sigma:$
\begin{equation}\label{eq:sigma_j_KN}
\sigma_k\left( (i_j)_{j=1}^{\infty}\right )\;=\; (ki_1i_2\cdots i_j\cdots )
\end{equation}

\begin{defn}
\label{defdjmonic}
(\cite{dutkay-jorgensen-monic})
A {\bf monic system} is a pair $(\mu, \{f_i\}_{i\in \mathbb Z_N}),$ where $\mu$ is a finite Borel measure on $K_N$ and $\{f_i\}_{i\in \mathbb Z_N}$ are  functions
on $K_N$ such that  for $j \in \mathbb Z_N$, $\mu\circ (\sigma_j)^{-1}<<\mu$ and
\begin{equation}
\label{monicsystem}
\frac{d \mu\circ (\sigma_j)^{-1}}{d \mu}\;=\;|f_j|^2,
\end{equation}
and the functions $\{f_j\}$ have the property that  
$$|f_j(x)|\;\not= \; 0,\;\mu-\text{a.e.}\;x\in \sigma_j(K_N).$$
A monic system is called  nonnegative if $f_j\geq 0$ for all $j\in \mathbb Z_N.$
\end{defn}

Given a monic system $(\mu, \{f_i\}_{i\in \mathbb Z_N}),$ in \cite{dutkay-jorgensen-monic} Dutkay and Jorgensen associated to it a representation of the Cuntz algebra ${\mathcal O}_N$ on $L^2(K_N,\mu)$ defined by:
$$S_j(\xi)(x)\;=\;f_j(x)\cdot \xi\circ \sigma(x)\;\;\text{for $\xi\in L^2(K_N,\mu)$ and $j\in\mathbb Z_N$},$$
and they proved that this representation is what they termed a {\it monic representation} (c.f. Theorem 2.7 of \cite{dutkay-jorgensen-monic} for details).

Recall we have a map $\iota: K_N\to \mathbb T$ defined by
$$\iota ((i_j)_{j=1}^{\infty})\;=\;e^{2\pi i\sum_{j=1}^{\infty}\frac{i_j}{N^j}}.$$  We also have an inverse map
$\theta:\mathbb T\to K_N$ where $\theta(e^{2\pi it})= (i_j)_{j=1}^{\infty}$ for $t=\sum_{j=1}^{\infty}\frac{i_j}{N^j}.$  We recall for the rational numbers there are more than one $N$-adic expansion, but such anomalies form a set of measure $0$ in $\mathbb T.$  With respect to this correspondence, the map
$\sigma$ looks like $\tau,$ where $\tau(z)=z^N,$ and the maps $\sigma_j$ correspond to the maps $\tau_j(e^{2\pi i t})=e^{2\pi i \frac{t+j}{N}},$ i.e.
\[
\iota\circ \sigma\;=\;\tau\circ \iota\;\;\text{and}\;\;\tau_j\circ \iota\;=\;\iota\circ\sigma_j\;\;\text{for}\;\;0\leq j\leq\;N-1.
\]
So, if the measure $\mu$ on $K_N$ is equal to Haar measure $\nu$ on $K_N$ (thought of as an infinite product of the cyclic groups $\Z_N$), a monic system of functions $\{f_i\}_{i\in \mathbb Z_N}$ on $(K_N,\nu)$ gives a collection of functions $\{h_i=f_i\circ \theta \}_{i\in \mathbb Z_N}$ on $\mathbb T.$

The most relevant aspect of Dutkay and Jorgensen's work on monic representations to this paper is that, using the fact that $(K_N,\nu)$ can be measure-theoretically identified with $(\mathbb T, \nu_{\mathbb T}),$ where $\nu_{\mathbb T} $ is Haar measure on the circle group $\mathbb T,$ by using the maps $\theta$ and $\iota$ defined above, it is possible to identify a  system of essentially bounded functions $\{h_j=f_j\circ \theta\}_{j=0}^{N-1}$  on  $\mathbb T,$ and one can check that these functions will satisfy the condition of Definition \ref{defCuntzlike}. The key relevant point in the proof of this is that by Theorem 2.9 of \cite{dutkay-jorgensen-monic}, the support of each $f_j$ is precisely $\sigma_j(K_N),$ and $|f_j((i_j)_{j=1}^{\infty})|^2=N$ on its support, so that the support of each $h_j$ is precisely $\tau_j(\mathbb T),$ with $|h_j(z)|=\sqrt{N}$ for $z\in \tau_j(\mathbb T)$ and $0$ otherwise. It therefore follows that monic systems of functions on $(K_N,\nu)$ moved over to $\mathbb T$ via the map $\theta$ all give rise to Cuntz-like systems of functions on $\mathbb T.$  However, these monic systems will only give rise to filter functions (and hence to classical wavelets) in isolated conditions (e.g., for $N=2$ it is possible to obtain the Shannon wavelet via a monic system of two functions that is equivalent to the filter functions $m_0(z)=\sqrt{2}\chi_{E_0}$ and $m_1(z)=\sqrt{2}\chi_{E_1},$ where $E_0$ is the image of $[0,\frac{1}{4})\cup [\frac{3}{4},1]$ under the exponential map from $[0,1]$ to $\mathbb T,$ and $E_1$ is the image in $\mathbb T$ of $[\frac{1}{4},\frac{3}{4})$).

Further analysis of monic representations can be found in \cite{dutkay-jorgensen-monic}.  We mention them here because they are the { closest} analog in the Cuntz $C^{\ast}$-algebra case to the sorts of representations of the higher-rank graph algebras that are used to construct wavelets in \cite{FGKP}.

\section{Marcolli-Paolucci wavelets}

 In the 2011 article \cite{MP}, Marcolli and Paolucci constructed representations of (finite) Cuntz-Krieger $C^{\ast}$-algebras on $L^2$-spaces for certain fractals, and then in certain cases went on to define wavelets generalizing the wavelets of A. Jonsson \cite{jonsson}.  We recall their basic constructions.

\begin{defn} Fix an integer $N>1.$ Let $A= \left(  A_{i,j}\right)_{i,j \in \Z_N}$ be a $N\times N$ matrix  whose entries $ A_{i,j}$ take on only values in $\{0,1\}$.  The {\bf Cuntz-Krieger $C^{\ast}$-algebra ${\mathcal O}_A$} is the universal $C^{\ast}$-algebra generated by partial isometries 
$\{T_i\}_{i\in {\Z}_N}$ satisfying
\begin{equation}
\label{CK1}
T_i^{\ast}T_i\;=\;\sum_{j=0}^{N-1}A_{i,j}T_jT_j^{\ast},
\end{equation}
\begin{equation}
\label{CK2}
T_i^{\ast}T_j\;=\;0\;\;\text{for $i\not=j$},
\end{equation}
and
\begin{equation}
\label{CK3}
\sum_{i=0}^{N-1}T_iT_i^{\ast}\;=\;\text{I}.
\end{equation}
\end{defn}

We note that these Cuntz-Krieger $C^{\ast}$-algebras ${\mathcal O}_A$ are  examples of $C^{\ast}$-algebras associated to certain special finite directed graphs, namely, those directed graphs admitting at most one edge with source $v$ and range $w$ for any pair of vertices $(v,w)$.  Indeed (cf.~\cite{Raeburn}, Remark 2.8) one can  show that the directed  graph in this case would have $N$ vertices in a set $E^0_A,$ labeled $E^0_A=\{v_0,v_1,\cdots, v_{N-1}\},$ with edge set $E^1_A=\{e_{(i,j)}\in \mathbb Z_N^2:\; A_{i,j}=1\}$; there is a (directed) edge $e_{(i,j)}$ beginning at $v_j$ and ending at $v_i$ iff $A_{i,j}=1.$
 The matrix $A$ then becomes the vertex matrix of the associated directed graph.
In the case where $A$ is the matrix that has $1$ in every entry, the $C^{\ast}$-algebra ${\mathcal O}_A$ is exactly the Cuntz algebra ${\mathcal O}_N.$

As had been done previously by K. Kawamura \cite{kawamura},  Marcolli and Paolucci constructed representations of ${\mathcal O}_A$ by employing the method of ``semibranching function systems".  We note for completeness that the semibranching function systems of Kawamura (\cite{kawamura}) were for the most part defined on finite Euclidean spaces, e.g. the unit interval $[0,1],$ whereas the semibranching function systems used by Marcolli and Paolucci (\cite{MP}) were mainly defined on Cantor sets.

\begin{defn} (c.f. \cite{kawamura}, \cite{MP} Definition 2.1, \cite{bezuglyi-jorgensen} Theorem 2.22) 

\label{semibranchingdef}
Let $(X,\mu)$ be a measure space and let $\{D_i\}_{i\in \mathbb Z_N}$ be a collection of $\mu$-measurable sets and $\{\sigma_i:D_i\to X\}_{i\in\mathbb Z_N}$ a collection of $\mu$-measurable maps. Let $A$ be an $N\times N$ $\{0,1\}$-matrix. The family of maps  $\{\sigma_i\}_{i\in\mathbb Z_N}$ is called a \emph{semibranching function system}
on $(X,\mu)$ with coding map $\sigma:X\to X$ if the following conditions hold:
\begin{enumerate}
  \item For $i\in \Z_N$, set $R_i=\sigma_i(D_i)$. Then we have 
  \[
  \mu(X\backslash \cup_{i\in\mathbb Z_N}R_i)=0\;\;\;\text{and}\;\;\; \mu(R_i\cap R_j)=0\;\;\text{for}\;\;i\not=j.
  \]
  \item  For $i\in \Z_N$, we have $\mu\circ \sigma_i<<\mu$ and
 \begin{equation}
 \label{semibranching}
 \frac{d(\mu\circ \sigma_i)}{d\mu}\;>\;0,\;\mu\text{-a.e. on}\;\;D_i.
 \end{equation}
  \item  For $i\in \Z_N$ and a.e. $ x\in D_i$, we have $$\sigma\circ \sigma_i(x)\;=\;x.$$
  \item (Cuntz-Krieger (C-K) condition:)  For $i,j\in \Z_N$, $\mu (D_i\Delta \cup_{i:A_{i,j}=1}R_j)\;=\;0.$
\end{enumerate}

\end{defn}

\begin{example} (\cite{kawamura}) Take $N>1,\;(X,\mu)=(\mathbb T,\nu)$ where $\nu$ is Haar measure on $\mathbb T,\;D_i=\mathbb T$ for $i\in \mathbb Z_N,$ and
$\sigma_j(z=e^{2\pi i t})=e^{\frac{2\pi i(x+j)}{N}}$ for $t\in [0,1);$ then $R_j=\{e^{2\pi i t}: t\in [\frac{j}{N},\frac{j+1}{N})\}.$ With the coding map  given by $\sigma(z)=z^N,$  we obtain a semibranching function system
{satisfying the  (C-K) condition}
for the $N\times N$ matrix consisting of all $1$'s.
\end{example}

\begin{example} (\cite{MP} Proposition 2.6)
\label{ex:mp-fractal}
  Take $N>1,$ and fix an $N\times N$ $\{0,1\}$-matrix $A.$ Let $\Lambda_A\subset \prod_{j=1}^{\infty}[\mathbb Z_N]_j$ be defined by

$$\Lambda_A\;=\;\{(i_1i_2\cdots i_j\cdots ):\; A_{i_j i_{j+1}}=1\;\;\text{for}\;\;j\in\mathbb N\}.$$
Marcolli and Paolucci have shown that, using the $N$-adic expansion map, $\Lambda_A$ can be embedded in $[0,1]$ as a fractal set and thus has a corresponding Hausdorff probability measure $\mu_A$ defined on its Borel subsets.
For each $i \in \Z_N$, let
$$D_i\;=\;\{(i_1i_2\cdots i_j\cdots ):\; A_{i,i_{1}}=1\}\;\subset \Lambda_A,$$
and define $\sigma_j$ for $j\in \mathbb Z_N$ by
$$\sigma_j:D_j\to \Lambda_A:\;\sigma_j\left( (i_1i_2\cdots i_k\cdots )\right )\;=\; (ji_1i_2\cdots i_k\cdots ).$$
Then
\[R_j := \sigma_j(D_j)=\{(j i_1 i_2\cdots i_k\cdots )\;: (i_1 i_2 \cdots i_k \cdots ) \in D_j\}, \]
and
denoting by $\sigma$ the one-sided shift on $\Lambda_A:$
$$\sigma\left( (i_1i_2\cdots i_k\cdots )\right )= (i_2i_3\cdots i_{k+1}\cdots )$$
we have that $\sigma\circ \sigma_j(x)=x$  for $x\in D_j$ and $j\in \mathbb Z_N.$
Marcolli and Paolucci show in Section 2.1 of \cite{MP} that this data  gives a semibranching function system satisfying the  (C-K) condition on  $(\Lambda_A,\mu_A).$  If $A$ is the matrix consisting entirely of $1$s, we obtain a monic system in the sense of \cite{dutkay-jorgensen-monic}.  Moreover, in this case, $D_i=K_N$ for all $i\in \mathbb Z_N$ and $R_i=Z(i)=\{(i_j)_{j=1}^{\infty}:\;i_1=i\}.$
\end{example}

Kawamura and then Marcolli and Paolucci observed the following relationship between semibranching function systems  satisfying the  (C-K) condition and representations of ${\mathcal O}_A:$

\begin{prop} (c.f. \cite{MP} Proposition 2.5)
\label{MPrepprop}
Fix a non-trivial $N\times N\;\{0,1\}$-matrix $A$ with $A_{i,i}=1$ for all $i\in \mathbb Z_N.$ Let $(X,\mu)$ be a measure space, and let $\{D_i\}_{i\in \mathbb Z_N},\;\{\sigma_i:D_i\to X\}_{i\in\mathbb Z_N}$ and $\{R_i=\sigma_i(D_i)\}_{i\in\mathbb Z_N}$ be a semibranching function system satisfying the  (C-K) condition on $(X,\mu)$ with coding map $\sigma:X\to X.$
For each $i\in \mathbb Z_N$ define $S_i:L^2(X,\mu)\to L^2(X,\mu)$ by
$$S_i(\xi)(x)\;=\;\chi_{R_i}(x)\Big(\frac{d\mu\circ \sigma_i}{d\mu}(\sigma(x))\Big)^{-\frac{1}{2}}\xi(\sigma(x))\;\;\text{for $\xi\in L^2(X,\mu)$ and $x\in X$}.$$
Then the family $\{S_i\}_{i\in \mathbb Z_N}$ of partial isometries satisfies the Cuntz-Krieger relations Equations (\ref{CK1}), (\ref{CK2}), and (\ref{CK3}), and therefore generates a representation of the Cuntz-Krieger algebra ${\mathcal O}_A.$
\end{prop}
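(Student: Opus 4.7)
The plan is to compute the adjoint $S_i^{\ast}$ explicitly and then verify each of the three Cuntz-Krieger relations by direct calculation on $L^2(X,\mu)$. For the adjoint, I would pair $\langle S_i \xi,\eta\rangle$ and make the change of variables $x=\sigma_i(y)$ on $R_i$, using that the pullback measure has density $\Phi_i := d(\mu\circ\sigma_i)/d\mu > 0$ on $D_i$ (by (\ref{semibranching})), together with the identity $\sigma\circ\sigma_i=\mathrm{id}$ on $D_i$ from property (3). The square-root normalization in the definition of $S_i$ is chosen precisely so that one power of $\Phi_i$ cancels a power of $\Phi_i^{-1/2}$, leaving
$$(S_i^{\ast}\eta)(y) \;=\; \chi_{D_i}(y)\,\Phi_i(y)^{1/2}\,\eta(\sigma_i(y)).$$

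Next I would verify (\ref{CK3}) by composing: for $x\in R_i$ we have $x=\sigma_i(y)$ for some $y\in D_i$, so $\sigma(x)=y$ and $\sigma_i(\sigma(x))=x$. The two factors $\Phi_i(\sigma(x))^{-1/2}$ and $\Phi_i(\sigma(x))^{1/2}$ cancel, giving $(S_i S_i^{\ast}\eta)(x) = \chi_{R_i}(x)\,\eta(x)$. Thus $S_i S_i^{\ast}$ is multiplication by $\chi_{R_i}$, and summing over $i$ yields the identity operator by property (1), which states that the $R_i$ cover $X$ up to a null set and are essentially disjoint. The same composition argument in the other order, this time using $\sigma\circ\sigma_i=\mathrm{id}$ on $D_i$, gives $(S_i^{\ast} S_i \xi)(y) = \chi_{D_i}(y)\,\xi(y)$, so $S_i^{\ast}S_i$ is multiplication by $\chi_{D_i}$.

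For (\ref{CK2}), the composition $(S_i^{\ast} S_j \xi)(y)$ picks up the factor $\chi_{D_i}(y)\,\chi_{R_j}(\sigma_i(y))$. For $y\in D_i$ we have $\sigma_i(y)\in R_i$, and $\mu(R_i\cap R_j)=0$ for $i\neq j$; since $\sigma_i$ pushes $\mu|_{D_i}$ to a measure absolutely continuous with respect to $\mu|_{R_i}$ (again by (\ref{semibranching})), the set $\{y\in D_i:\sigma_i(y)\in R_j\}$ is $\mu$-null, so $S_i^{\ast}S_j=0$. Finally, relation (\ref{CK1}) follows by invoking the (C-K) condition (4): modulo a $\mu$-null set,
$$\chi_{D_i} \;=\; \sum_{j:\,A_{i,j}=1}\chi_{R_j} \;=\; \sum_{j=0}^{N-1}A_{i,j}\,\chi_{R_j},$$
and since $\chi_{R_j} = S_j S_j^{\ast}$ from the first step, we obtain $S_i^{\ast}S_i=\sum_j A_{i,j}S_jS_j^{\ast}$.

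The computations themselves are short; the only real subtlety is careful bookkeeping of where each function is defined and ensuring every composition and Radon-Nikodym manipulation is valid $\mu$-almost everywhere on the correct set. In particular, the (C-K) condition (4) of Definition \ref{semibranchingdef} is exactly what is needed to bridge the two multiplication operators $\chi_{D_i}$ and $\sum_j A_{i,j}\chi_{R_j}$, so it is no surprise that this hypothesis appears only at the final step; the nonsingularity condition (2) is what powers both the existence of the adjoint and the disjointness argument in (\ref{CK2}).
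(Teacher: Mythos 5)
Your proof is correct. Note that the paper itself offers no argument for this proposition --- it is stated with a pointer to Proposition 2.5 of \cite{MP} --- so there is no internal proof to compare against; your direct verification supplies exactly what the citation defers. The computation of $S_i^{\ast}$ via the change of variables $x=\sigma_i(y)$, the identification of $S_iS_i^{\ast}$ and $S_i^{\ast}S_i$ with multiplication by $\chi_{R_i}$ and $\chi_{D_i}$ respectively, and the use of conditions (1), (4), and \eqref{semibranching} at precisely the steps you indicate is the standard argument (it is essentially what Kawamura and Marcolli--Paolucci do). The one genuinely delicate point, which you handle correctly, is in (\ref{CK2}): knowing $\mu(R_i\cap R_j)=0$ one must still argue that $\sigma_i^{-1}(R_i\cap R_j)$ is $\mu$-null, and this is where the strict positivity of $\Phi_i=d(\mu\circ\sigma_i)/d\mu$ on $D_i$ (not merely absolute continuity) is used, since $\mu(\sigma_i(E))=\int_E\Phi_i\,d\mu=0$ with $\Phi_i>0$ a.e.\ forces $\mu(E)=0$. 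You might state that implication in exactly this form rather than in terms of pushforwards, but the substance is right; the hypothesis $A_{i,i}=1$ plays no role in the verification, consistent with your not using it.
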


We now discuss the construction of wavelets for Cuntz-Krieger $C^{\ast}$-algebras as developed by Marcolli and Paolucci.
In the setting of Example \ref{ex:mp-fractal}, suppose in addition that the matrix $A$ is irreducible, i.e.  for every pair $(i,j)\in {\mathbb Z}_N\times {\mathbb Z}_N$ there exists $n\in\mathbb N$ with $A^n_{i,j}\not=0.$

In this case, Marcolli and Paolucci proved that the Hausdorff measure $\mu_A$ on $\Lambda_A$ is exactly the probability measure associated to the normalized Perron-Frobenius eigenvector of $A$.  Namely, suppose $(p_0,p_1,\ldots, p_{N-1})$ is a  vector in $(0, \infty)^N$ 
satisfying $\sum_{i=0}^{N-1}p_i=1,$  and such that
$$A(p_0,p_1,\ldots, p_{N-1})^T=\rho(A)(p_0,p_1,\cdots, p_{N-1})^T,$$
where $\rho(A)$ is the spectral radius of $A$.  (The existence of such a vector $(p_0, \ldots, p_{N-1})$, called the {\bf Perron-Frobenius eigenvector} of $A$, follows from the irreducibility of $A$.)  Then we have:
\begin{thm}
\label{thmMPmeasure}
(\cite{MP}, Theorem 2.17)
Let $N>1$ be fixed, and suppose  that $A$ is an irreducible $\{0,1\}$-matrix.
Let $\{\sigma_i:D_i\to R_i\}$  with $\sigma:\Lambda_A\to \Lambda_A$ be the semibranching  function system satisfying the  (C-K) condition associated to $(\Lambda_A, \mu_A)$ as in Example \ref{ex:mp-fractal}.  Then the Hausdorff measure $\mu_A$ on $\Lambda_A$ is exactly the probability measure associated to the  Perron-Frobenius eigenvector $(p_0, \ldots, p_{N-1})$ of $A$.  To be precise, for $i\in \Z_N$,  $\mu_A(R_i)\;=\;p_i$ and
$$\frac{d(\mu\circ \sigma_i)}{d\mu}\;=\;N^{-\delta_A},\;\text{a.e. on}\;\;D_i,$$
where $\delta_A$ is the Hausdorff dimension of $\Lambda_A,$ and
the spectral radius $\rho(A)$ of $A$ is equal to $N^{\delta_A}.$
\end{thm}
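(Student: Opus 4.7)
The plan is to exploit the self-similar structure of $\Lambda_A$ together with the contractive behavior of the Hausdorff measure under the branch maps $\sigma_i$ in order to identify the vector $(\mu_A(R_i))_{i\in\Z_N}$ as the Perron-Frobenius eigenvector of $A$.

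My first step would be to record the key scaling identity. Under the $N$-adic embedding $\Lambda_A\hookrightarrow [0,1]$, each $\sigma_i$ acts as the affine similitude $x\mapsto (x+i)/N$ restricted to the admissible subset $D_i$, and the images $R_i$ lie in pairwise essentially disjoint intervals $[i/N,(i+1)/N]$. Thus the restricted IFS $\{\sigma_i\}_{i\in\Z_N}$ satisfies the open set condition, and by the Hutchinson/Moran theory the normalized $\delta_A$-dimensional Hausdorff measure $\mu_A$ is the unique self-similar probability measure of the system, so for every Borel $E\subseteq D_i$,
\[
\mu_A(\sigma_i(E)) \;=\; N^{-\delta_A}\,\mu_A(E).
\]
This is exactly the Radon-Nikodym statement $d(\mu_A\circ \sigma_i)/d\mu_A = N^{-\delta_A}$ on $D_i$, which is the second conclusion of the theorem.

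Next I would set $q_i:=\mu_A(R_i)$ and derive an eigenvalue equation. The Cuntz-Krieger condition gives a decomposition $D_i = \bigsqcup_{j:\,A_{i,j}=1} R_j$ up to $\mu_A$-null sets, so applying the scaling identity with $E=D_i$ yields
\[
q_i \;=\; \mu_A(R_i) \;=\; N^{-\delta_A}\,\mu_A(D_i) \;=\; N^{-\delta_A}\sum_{j\in\Z_N} A_{i,j}\,q_j,
\]
i.e., $A\mathbf{q} = N^{\delta_A}\mathbf{q}$ for $\mathbf{q}=(q_0,\ldots,q_{N-1})^T$. Irreducibility of $A$ guarantees that each letter $i$ initiates at least one admissible infinite word, which by the standard lower bound on Hausdorff measure of a cylinder of length $n$ forces $q_i>0$; and the essentially disjoint decomposition $\Lambda_A = \bigsqcup_i R_i$ gives $\sum_i q_i = \mu_A(\Lambda_A) = 1$. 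By the Perron-Frobenius theorem for irreducible nonnegative matrices, the unique strictly positive eigenvector of $A$ with unit $\ell^1$-norm is $\mathbf{p}$, and its eigenvalue is $\rho(A)$. Therefore $\mathbf{q} = \mathbf{p}$ and $N^{\delta_A}=\rho(A)$, completing the remaining two conclusions.

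The hard part, and really the only nonroutine step, is the scaling identity itself: one must identify $\mu_A$ (defined \emph{a priori} as normalized Hausdorff measure of dimension $\delta_A$) with the canonical self-similar measure of the IFS $\{\sigma_i\}_{i\in\Z_N}$. This is where the open set condition and the Moran/Hutchinson apparatus enter, and it is the only place in the argument where the geometric (as opposed to purely combinatorial) structure of $\Lambda_A\subset[0,1]$ plays a role. Once this identification is established, everything else is a one-line Perron-Frobenius computation.
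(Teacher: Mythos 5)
The paper itself gives no proof of this theorem: it is quoted verbatim from Marcolli--Paolucci (\cite{MP}, Theorem 2.17) and the surrounding text explicitly defers to that reference, so there is no in-paper argument to compare yours against. On its own merits your argument is correct and is essentially the standard one: the scaling identity $\mu_A(\sigma_i(E)) = N^{-\delta_A}\mu_A(E)$ plus the (C-K) decomposition $D_i = \bigsqcup_{j:A_{i,j}=1} R_j$ and $\Lambda_A = \bigsqcup_i R_i$ yields $A\mathbf{q} = N^{\delta_A}\mathbf{q}$ with $\mathbf{q}\geq 0$, $\|\mathbf{q}\|_1 = 1$, and Perron--Frobenius for irreducible nonnegative matrices finishes it. Two refinements are worth noting. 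First, you somewhat overstate the difficulty of the scaling identity: since $\sigma_i$ is the restriction of the similitude $x\mapsto (x+i)/N$, the identity $\mathcal{H}^{\delta_A}(\sigma_i(E)) = N^{-\delta_A}\mathcal{H}^{\delta_A}(E)$ is an immediate property of $s$-dimensional Hausdorff measure under similarity maps and requires no uniqueness theorem for self-similar measures; the genuinely nontrivial geometric input is only that $0 < \mathcal{H}^{\delta_A}(\Lambda_A) < \infty$, so that the normalized measure $\mu_A$ exists (which the statement of the theorem already presupposes). Moreover, because the maps $\sigma_i$ have restricted domains $D_i$, the system is a graph-directed construction rather than a plain Hutchinson IFS, so the correct reference for that positivity/finiteness is the Mauldin--Williams theory rather than Hutchinson/Moran with the open set condition. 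Second, your separate argument that each $q_i>0$ is unnecessary: the Perron--Frobenius theorem already asserts that a nonzero nonnegative eigenvector of an irreducible nonnegative matrix must be strictly positive and must correspond to the eigenvalue $\rho(A)$, which gives $\mathbf{q}=\mathbf{p}$ and $N^{\delta_A}=\rho(A)$ in one step.
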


Given an irreducible $\{0,1\}$-matrix $A$ as in Theorem  \ref{thmMPmeasure},  Marcolli and Paolucci were able to construct families of ${\mathcal O}_A$-wavelets on $L^2(\Lambda_A,\mu_A)$ generalizing splines.  We describe their construction here (see also Section 3 of \cite{MP}).  For the purposes of this survey, we concentrate here on the wavelets whose scaling functions or ``father wavelets" are constant on the subsets $R_i$ of $\Lambda_A.$

We denote by ${\mathcal V}_0$ the (finite-dimensional) subspace of $L^2(\Lambda_A,\mu_A)$ given by
$${\mathcal V}_0\;=\;\overline{\text{span}}\{\chi_{R_i}:\;i\in\mathbb Z_N\}.$$
For each $k,\;0\leq k\leq N-1,$ let ${\mathcal D}_k=\;\{j:\;A_{kj}=1\},$ and let $d_k=|{\mathcal D}_k|.$ Enumerate the elements of ${\mathcal D}_k$ by setting  ${\mathcal D}_k=\{n_0<n_1<\cdots<{n_{d_k-1}}\}.$
For each $k\in \mathbb Z_N,$ define the following inner product on $\mathbb C^{d_k}:$
$$\langle (x_j), (y_j)\rangle_{PF}\;=\;\sum_{j=0}^{d_k-1}\overline{x_j}y_jp_{n_j},$$
where $(p_{n_j})$ are the appropriate coefficients of the Perron-Frobenius eigenvector of $A$.
We now define vectors $\{c^{j,k}:\;0\leq j \leq d_k-1,\;0\leq k\leq N-1\}$, where $c^{j,k} = (c^{j,k}_1, \ldots, c^{j,k}_{d_k -1})$, such that for each $k\in\mathbb Z_N,$
$\{c^{j,k}: 0\leq j\leq d_k-1\}$ is an orthonormal basis for $\mathbb C^{d_k-1}$ with respect to the inner product $\langle \cdot,\cdot \rangle_{PF},$ so that 
$$c_{\ell}^{0,k}=c_{\ell'}^{0,k}\;\;\text{for $0\leq \ell, \ell'\leq d_k-1$ and $k\in \mathbb Z_N$},$$
and for each fixed $k\in\mathbb Z_N,$
$$\text{span}\{c^{j,k}: 1\leq j\leq d_k-1\}\;=\;\{(1,1,\cdots,1)\}^{\perp}$$
with respect to the inner product $\langle \cdot,\cdot \rangle_{PF}$ defined above.

We now note that we can write each set $R_k$ as a disjoint union:
$$R_k=\bigcup_{j=0}^{d_k-1}R_{[kn_j]},$$
where
$$R_{[kn_j]}\;=\;\{(i_1i_2\cdots i_n\cdots )
\in\Lambda_A:\;\;i_1=k\;\text{and}\;i_2=n_j\}.$$
Thus in terms of characteristic functions,
$$\chi_{R_k}\;=\;\sum_{j=0}^{d_k-1}\chi_{R_{[kn_j]}}\;\;\text{for $k\in\mathbb Z_N$}.$$
Now for each $k\in\mathbb Z_N$ we define functions $\{f^{j,k}\}_{j=0}^{d_k-1}$  on $\Lambda_A$ by
$$f^{j,k}(x)\;=\;\frac{1}{\sqrt{p_k}}\sum_{\ell=0}^{d_k-1}c_{\ell}^{j,k}\chi_{R_{[kn_{\ell}]}}(x).$$
We note that each function $f^{j,k}$ is $\mu_A$-measurable. 
Also, for each $k\in \mathbb Z_N,$
$$f^{0,k}=\frac{1}{\sqrt{p_{k}}}\sum_{\ell=0}^{d_k-1}c_{\ell}^{0,k}\chi_{R_{[kn_{\ell}]}} = \frac{c_1^{0,k}}{\sqrt{p_k}} \sum_{\ell=0}^{d_k-1} \chi_{R_{[kn_\ell]}} = \frac{c_1^{0,k}}{\sqrt{p_k}} \chi_{R_k}$$
is a scalar multiple of $\chi_{R_{k}}$, since the vector $c^{0,k}$ is a constant vector. 
It follows that
$$\text{span}\{f^{0,k}\}_{k=0}^{N-1}\;=\;\text{span}\{\chi_{R_k}\}_{k=0}^{N-1}\;=\;{\mathcal V}_0.$$

We are now nearly ready to state our simplified version of the main theorem on wavelets in \cite{MP}.  First, a definition: Fix an integer $n > 1$.  We say that a word $w = w_1 w_2 \cdots w_n$ in $\prod_{k=1}^n \Z_N$ is \emph{admissible} for our $\{0,1\}$-matrix $A$ if, for all $1 \leq i \leq n-1$, we have $A_{w_i w_{i+1}} = 1$.  If $w$ is admissible, we write $S_w$ for the partial isometry in $B(L^2(\Lambda_A, \mu_A))$ given by
\[S_w = S_{w_1} S_{w_2} \cdots S_{w_n}.\]  We also remark that in order to be consistent with standard notation from multiresolution analysis theory and also with our notation for the higher-rank graph $C^{\ast}$-algebra wavelets, we have changed the notation for the orthogonal subspaces from the original notation used in \cite{MP}.

\begin{thm}
(\cite{MP}, Theorem 3.2)
\label{MPwaveletsthm}
Fix $N>1.$ Let $A$ be an $N \times N$, irreducible, $\{0,1\}$-matrix, let $(\Lambda_A,\mu_A)$ be the associated fractal space with Hausdorff measure, and let $\{\sigma_j:D_j\to R_j\}_{j\in \mathbb Z_N}$ and $\sigma$ be the associated semibranching function system satisfying the  (C-K) condition defined on $(\Lambda_A,\mu_A).$ Let $\{S_k\}_{k\in \Z_N}$ be a set of operators on $L^2(\Lambda_A, \mu_A)$ given by the formula in Proposition~\ref{MPrepprop}. Let $\{f^{j,k}:\;k\in\mathbb Z_N,\;0\leq j\leq d_k-1\}$ be the functions on $\Lambda_A$ defined in the above paragraphs. For $k\in \Z_N$, let
$$\phi_k\;=\;f^{0,k}.$$
Define
$${\mathcal W}_0\;=\;\overline{\text{span}}\{f^{j,k}: k\in \mathbb Z_N,\;1\leq j\leq d_{k-1}\};$$
$${\mathcal W}_n=\overline{\text{span}}\{S_w(f^{j,k}):\;k\in \mathbb Z_N,\;1\leq j\leq d_k-1\;\;\text{and}\;\;w\;\text{is an admissible word of length}\;n\}.$$
Then the subspaces ${\mathcal V}_0$ and $\{{\mathcal W}_n\}_{n=0}^{\infty}$ are mutually pairwise orthogonal in $L^2(\Lambda_A,\mu_A)$ and
$$L^2(\Lambda_A,\mu_A)=\; \text{span}\; \left({\mathcal V}_0\oplus \Big[\bigoplus_{n=0}^{\infty} \mathcal{W}_n\Big]\right).$$

\end{thm}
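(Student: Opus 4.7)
The plan has three movements: an explicit description of $S_w(f^{j,k})$, the orthogonality of the spaces, and density via refinement of cylinders.

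First I would iterate the formula in Proposition~\ref{MPrepprop} to get an explicit expression for $S_w$ on admissible words $w=w_1\cdots w_n$. Using Theorem~\ref{thmMPmeasure}, which says $d(\mu_A\circ \sigma_i)/d\mu_A = N^{-\delta_A}$ on $D_i$, the composed Radon--Nikodym factor at each level is the constant $N^{\delta_A/2}$, so
\[
S_w(\xi)(x)\;=\;\chi_{R_w}(x)\,N^{n\delta_A/2}\,\xi(\sigma^n(x)).
\]
Substituting $\xi = f^{j,k}$ and using the decomposition $R_k = \bigcup_\ell R_{[kn_\ell]}$, I would obtain
\[
S_w(f^{j,k})\;=\;\frac{N^{n\delta_A/2}}{\sqrt{p_k}}\sum_{\ell=0}^{d_k-1}c_\ell^{j,k}\,\chi_{R_{[w_1\cdots w_n\,k\,n_\ell]}},
\]
which is piecewise constant on admissible cylinders of length $n+2$ and is nonzero only when $A_{w_n,k}=1$. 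In particular $S_w(f^{0,k})$ is a scalar multiple of $\chi_{R_{wk}}$, since $c^{0,k}$ is a constant vector.

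Second, I would verify mutual orthogonality by splitting on support. Given $S_w(f^{j,k})\in\mathcal{W}_n$ and $S_{w'}(f^{j',k'})\in\mathcal{W}_m$ with $n\le m$: if $wk$ is not an initial segment of $w'k'$, the supports lie in disjoint cylinders. Otherwise $S_w(f^{j,k})$ is constant on the support of $S_{w'}(f^{j',k'})$, so the inner product equals this constant times $\int S_{w'}(f^{j',k'})\,d\mu_A$. A direct computation using $\mu_A(R_{[w'_1\cdots w'_m k' n'_{\ell'}]}) = N^{-(m+1)\delta_A}\,p_{n'_{\ell'}}$ reduces this integral to a scalar multiple of $\langle(1,\ldots,1),c^{j',k'}\rangle_{PF}$, which vanishes for $j'\ge 1$ by the defining property of the basis $\{c^{j,k}\}$. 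The special case $n=m$, $w=w'$, $k=k'$ is handled by the orthonormality of $\{c^{j,k}\}_{j=1}^{d_k-1}$ with respect to $\langle\cdot,\cdot\rangle_{PF}$. Finally, $\mathcal{V}_0\perp\mathcal{W}_n$ follows by the same argument: each $\chi_{R_i}$ is constant on $R_{wk}$ when the two supports meet, and the same PF-orthogonality kills the integral.

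Third, for density I would induct on $n$, showing
\[
\mathcal{V}_0\oplus\mathcal{W}_0\oplus\cdots\oplus\mathcal{W}_{n-1}\;=\;\overline{\text{span}}\{\chi_C : C\text{ is an admissible cylinder of length }n+1\}.
\]
The base case is the definition of $\mathcal{V}_0$. For the inductive step, the explicit formula shows that for each admissible $(w,k)$ with $|w|=n$, the full family $\{S_w(f^{j,k}) : 0\le j\le d_k-1\}$ spans $\operatorname{span}\{\chi_{R_{[wkn_\ell]}}:0\le \ell\le d_k-1\}$, because $\{c^{j,k}\}_{j=0}^{d_k-1}$ is a basis of $\mathbb{C}^{d_k}$. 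The $j=0$ vectors give multiples of $\chi_{R_{wk}}$ already lying in the inductive hypothesis, so adjoining the $j\ge 1$ vectors refines cylinders of length $n+1$ into those of length $n+2$. Since admissible cylinders generate the Borel $\sigma$-algebra of $\Lambda_A$, the union over $n$ is dense in $L^2(\Lambda_A,\mu_A)$.

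The main obstacle is the cross-level orthogonality in step two: the argument hinges on simultaneously invoking the constant Jacobian from Theorem~\ref{thmMPmeasure} (to handle the nontrivial measure on $\Lambda_A$) and the PF-orthogonality of $c^{j,k}$ to $(1,\ldots,1)$ for $j\ge 1$. Once these two ingredients are combined via the ``$S_w(f^{j,k})$ is constant on finer cylinders'' observation, the remaining parts of the proof are essentially a dimension count refining cylinders one coordinate at a time.
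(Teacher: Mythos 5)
Your proof is correct, but note that the paper itself does not prove Theorem \ref{MPwaveletsthm}: it explicitly defers to \cite{MP} (``Since the proof of the above theorem can be read in \cite{MP}, we do not include it here''), and instead proves in detail the parallel statements for Markov measures on $K_N$ (Theorem \ref{MPwaveletsMarkov}) and for higher-rank graphs (Theorem \ref{Wavelets-Theo}). Measured against those in-house proofs, your argument uses the same core ingredients --- the Perron--Frobenius inner product, the orthogonality of the vectors $c^{j,k}$ for $j\ge 1$ to $(1,\dots,1)$, and the constant Radon--Nikodym derivative from Theorem \ref{thmMPmeasure} --- but organizes the orthogonality step differently. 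The paper handles cross-level orthogonality by peeling off one letter at a time via $S_i^*S_{i'}=\delta_{i,i'}I$ and inducting on the level difference, with separate direct integral computations against $\mathcal{V}_0$ and $\mathcal{W}_0$; you instead observe that a lower-level wavelet is locally constant on the support cylinder of a higher-level one, so every cross pairing reduces in one stroke to the vanishing of the PF-pairing of $c^{j',k'}$ with the constant vector. That buys a uniform treatment of all level pairs and of $\mathcal{V}_0$ simultaneously, at the cost of a short case analysis on whether $wk$ is an initial segment of $w'k'$; your handling of the equal-length case via orthonormality of the $c^{j,k}$ closes the one gap the constancy argument leaves. Your density step --- identifying $\mathcal{V}_0\oplus\mathcal{W}_0\oplus\cdots\oplus\mathcal{W}_{n-1}$ with the span of characteristic functions of admissible cylinders of length $n+1$ --- is the same cylinder-refinement idea as Lemma \ref{lem:squares-span} and the induction concluding Theorem \ref{Wavelets-Theo}, stated slightly more sharply; it is also the piece that the paper's proof of the analogous Theorem \ref{MPwaveletsMarkov} leaves largely implicit.
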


The $\phi_k$ are called the scaling functions (or ``father wavelets") and the $f^{j,k}$ are called the wavelets (or ``mother wavelets") for the system.

Since the proof of the above theorem can be read in \cite{MP}, we do not include it here.  However, as we did in the second paragraph of Section 4 of \cite{FGKP}, we do wish to remark upon the fact that the emphasis on the Perron-Frobenius measure in \cite{MP} does not appear to be crucial for the construction of the orthogonal subspaces.
To illustrate this further, we now construct wavelets for ${\mathcal O}_N$ corresponding to any Markov measure on $K_N,$ and here we will include the proof so as to illustrate our techniques.
Note also that by taking tensor products, the wavelets below will produce wavelets on $k$-graph algebras of tensor-product type, for example, 
in ${\mathcal O}_N \otimes {\mathcal O}_M,$ as studied in Example 3.8 of \cite{KangPask}.

 Recall from Section 2 that $K_N$ is the infinite product space $\prod_{j=1}^\infty \Z_N$ which can be realized as the Cantor set. Let $\sigma$ and $\{\sigma_j\}_{j\in \Z_N}$ be the one-sided shift on $K_N$ and its inverse branches given in \eqref{eq:sigma_KN} and \eqref{eq:sigma_j_KN}.   Following Example 3.11 of \cite{dutkay-jorgensen-monic}, fix $\{p_i\in (0,1): i\in\mathbb Z_N\}$, with $\sum_{i\in \mathbb Z_N}p_i=1,$ and define the Markov measure
$$\mu(Z(i_1i_2\cdots i_n))=\prod_{j=1}^np_{i_j},$$
where $i_j\in \Z_N$ for $1\le j \le n$, and 
$$Z(i_1i_2\cdots i_n)\;=\;\{(x_1x_2\cdots x_j\cdots )
:\; x_{1}=i_1,\;x_2=i_2,\cdots,\;x_n=i_n\}.$$
As described at the end of Example \ref{ex:mp-fractal}, for the $N\times N$ matrix consisting of all $1$'s, the standard semibranching function system on $K_N$ satisfying the  (C-K) condition is  given by $\{\sigma_i: D_i\to R_i\}_{i\in Z_N},$ where $\sigma: K_N\to K_N$ satisfies $D_i=K_N$ for all  $i\in\mathbb Z_N$ and $R_i=Z(i).$
\begin{thm}
\label{MPwaveletsMarkov}
Fix $N>1,$ let $\{p_i\}_{i=0}^{N-1}$ be a collection of positive numbers with $\sum_{i=0}^{N-1}p_i=1,$ and let $\mu$ be the associated Markov Borel probability measure on $(K_N,\mu)$ defined as above.   For $i\in \Z_N$, let $\{\sigma_i:K_N\to R_i=Z(i)\}$ and $\sigma:K_N\to K_N$ be the associated semibranching function system satisfying the  (C-K) condition associated to the $N\times N$ matrix of all $1's$, and define $S_i \in B(L^2(K_N, \mu))$ by
\[S_i(f)(w) = \chi_{Z(i)}(w) p_i^{-1/2} f(\sigma(w)).\]
 Then as in Theorem \ref{MPwaveletsthm}, there are scaling functions $\{\phi_k\}_{k=0}^{N-1}\subset L^2(K_N,\mu)$ and ``wavelets"  $\{\psi_{j,k}:\;k\in\mathbb Z_N,\;1\leq j\leq N-1\}$ such that setting
$${\mathcal V}_0=\overline{\text{span}}\{\phi_k:\;k\in\mathbb Z_N\},$$
$${\mathcal W}_0\;=\;\overline{\text{span}}\{\psi_{j,k}:k\in \mathbb Z_N,\;1\leq j\leq N-1\}\;\;\text{and}$$
$${\mathcal W}_n=\overline{\text{span}}\{S_w(\psi_{j,k}):k\in \mathbb Z_N,\;1\leq j\leq N-1,\;w\;\text{a word of length}\;n\}\;\;\text{for}\;\; n\geq 1,$$
 we obtain
$$L^2(K_N,\mu)=\;{\text{span}} \; \left({\mathcal V}_0\oplus [\bigoplus_{n=0}^{\infty}{\mathcal W}_n]\right).$$
\end{thm}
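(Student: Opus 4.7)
The plan is to adapt the proof of Theorem~\ref{MPwaveletsthm} from~\cite{MP} to the Markov-measure setting, replacing the Perron--Frobenius weights with the probabilities $\{p_i\}_{i=0}^{N-1}$. Since $\mu(\sigma_i(Z(i_1 \cdots i_n))) = p_i \cdot \mu(Z(i_1 \cdots i_n))$ by definition of the Markov measure, a short computation gives $\frac{d\mu \circ \sigma_i}{d\mu} = p_i$ (a constant on $K_N$), which confirms that the operators $S_i$ in the statement are exactly the ones constructed in Proposition~\ref{MPrepprop} and therefore satisfy the Cuntz relations.

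To define the wavelets, I equip $\mathbb{C}^N$ with the Markov-weighted inner product $\langle x, y \rangle_p := \sum_{\ell=0}^{N-1} \overline{x_\ell}\,y_\ell\,p_\ell$. The constant vector $c^{0,k} := (1,1,\ldots,1)$ has unit norm in this inner product since $\sum_\ell p_\ell = 1$; for each $k \in \Z_N$, extend it to an orthonormal basis $\{c^{j,k}\}_{j=0}^{N-1}$ of $(\mathbb{C}^N, \langle \cdot, \cdot \rangle_p)$. Then set
\[
\phi_k := \frac{1}{\sqrt{p_k}}\, \chi_{Z(k)}, \qquad \psi_{j,k} := \frac{1}{\sqrt{p_k}} \sum_{\ell=0}^{N-1} c_\ell^{j,k}\, \chi_{Z(k\ell)} \quad (1 \leq j \leq N-1).
\]
Using $\mu(Z(k\ell)) = p_k p_\ell$, direct computation gives $\langle \psi_{j,k}, \psi_{j',k}\rangle_{L^2} = \langle c^{j,k}, c^{j',k}\rangle_p = \delta_{j,j'}$, and the orthogonality of $\phi_k$ to each $\psi_{j,k}$ for $j \geq 1$ reduces to $\langle c^{0,k}, c^{j,k}\rangle_p = 0$; disjoint supports dispose of the cross-$k$ terms. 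Hence $\mathcal{V}_0 \perp \mathcal{W}_0$ and both families are orthonormal.

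For the higher $\mathcal{W}_n$'s, the isometric property $S_w^* S_w = I$ and the relation $S_w^* S_v = \delta_{w,v}\,I$ for $|w| = |v|$ (with $S_w^* S_v = S_u$ when $v = wu$) follow from the Cuntz relations and allow me to transfer inner products back to level zero. The crucial supplementary ingredient is the identity $\int_{K_N} S_w(\psi_{j,k})\, d\mu = 0$ for every admissible word $w$ and every $j \geq 1$, which boils down again to $\sum_\ell c_\ell^{j,k} p_\ell = \langle c^{0,k}, c^{j,k}\rangle_p = 0$. Together these yield pairwise orthonormality of $\{S_w(\psi_{j,k})\}_{|w|=n, k, j}$ within each $\mathcal{W}_n$, mutual orthogonality between $\mathcal{W}_n$ and $\mathcal{W}_{n'}$ for $n \neq n'$, and orthogonality of every $\mathcal{W}_n$ to $\mathcal{V}_0$.

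Finally, the spanning assertion follows by induction on $n$: I will show
\[
\mathcal{V}_0 \oplus \bigoplus_{m=0}^{n} \mathcal{W}_m \;=\; \operatorname{span}\{\chi_{Z(i_1 i_2 \cdots i_{n+2})} : i_r \in \Z_N\}.
\]
The inductive step uses the easy identity $S_w(\chi_{Z(j)}) = (\prod_{i=1}^{|w|} p_{w_i})^{-1/2}\, \chi_{Z(wj)}$ to reduce the question, for each fixed length-$n$ word $w$ and each $k$, to showing that the $N$ vectors $\{c^{j,k}\}_{j=0}^{N-1}$ span $\mathbb{C}^N$ --- which holds by construction. Since cylinder characteristic functions are dense in $L^2(K_N,\mu)$, the conclusion follows. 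I expect the main technical obstacle to be carefully bookkeeping the recursion and tracking the scalar normalizations that propagate through the iterated $S_w$; conceptually, however, the proof hinges on a single point, namely that the weighted inner product $\langle \cdot, \cdot \rangle_p$ is precisely the one giving the $\psi_{j,k}$ zero Markov mean, a property preserved at every level by the isometric transport under the Cuntz generators.
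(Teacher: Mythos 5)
Your proposal is correct and follows essentially the same route as the paper: the same Markov-weighted inner product $\langle x,y\rangle_p=\sum_\ell \overline{x_\ell}y_\ell p_\ell$, the same functions $\phi_k$ and $\psi_{j,k}$, orthogonality reduced to $\sum_\ell c^{j,k}_\ell p_\ell=0$, and transfer of inner products to level zero via $S_i^*S_{i'}=\delta_{i,i'}I$. The only difference is that you make explicit the spanning step (identifying $\mathcal{V}_0\oplus\bigoplus_{m=0}^{n}\mathcal{W}_m$ with the span of length-$(n+2)$ cylinder functions), which the paper's proof leaves implicit; that addition is sound and, if anything, tightens the argument.
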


\begin{proof}
Following the method of Theorem \ref{MPwaveletsthm}, we define an inner product on $\mathbb C^N$ by setting
\begin{equation}\langle (x_j)_j, (y_j)_j\rangle\;=\;\sum_{j\in \mathbb Z_N}\overline{x_j}\cdot y_j\cdot p_j.
\label{eq:4.9innerprod}
\end{equation}
For fixed $k\in \mathbb Z_N,$ we let $c^{0,k}$ be the vector in $\mathbb C^N$ defined by
$$c^{0,k}=(1,1,\cdots,1),$$
and let $\{c^{j,k}\}_{1\leq j \leq N-1}$, with $c^{j,k}=(c^{j,k}_{\ell}  )_{\ell \in \Z_N}$,  be any orthonormal basis for $\{(1,1,\cdots,1)\}^{\perp}$ with respect to the inner product \eqref{eq:4.9innerprod}.
For fixed $k\in \mathbb Z_N,$ define functions $\{f^{j,k}: 0\;\leq j\leq N-1\}$  on $K_N$ by:
$$f^{j,k}(x)\;=\;\frac{1}{\sqrt{p_k}}\sum_{\ell=0}^{N-1}c^{j,k}_{\ell}\chi_{Z(k\ell)}(x).$$
Note that $f^{0,k}$ is a normalized version of $\chi_{Z(k)}(x).$
We claim that setting
$$\phi_{k}(x)=f^{0,k}(x)\;\;\text{for $k\in \Z_N$},$$
and
$$\psi_{j,k}(x)=f^{j,k}(x)\;\;\text{for $1\leq k\leq N-1$ and $1\leq j\leq N-1$},$$
we will obtain a wavelet family for $L^2(K_N,\mu)$ where $\mu$
is the Markov measure determined by 
\[ \mu(Z(i_1 i_2 \cdots i_n)) = \prod_{j=1}^n p_{i_j}.\]

We first note that if $i_1\not=i_2,$
the integral
$$\int_{K_N}\phi_{i_1}\overline{\phi_{i_2}}d\mu$$
is a scalar multiple of the integral
$$\int_{K_N}\chi_{Z(i_1)}(x)\chi_{Z(i_2)}(x)d\mu$$
and this latter integral is equal to zero because the functions in question have disjoint support.

We also remark that for $1\leq j\leq N-1$ and $k\in \mathbb Z_N,$
$$\sum_{\ell=0}^{N-1}c^{j,k}_{\ell}p_{\ell}=0.$$
Multiplying by $p_k$ we get:
$$\sum_{\ell=0}^{N-1}c^{j,k}_{\ell}p_{\ell}p_k=0,\;\;\text{so that}\;\;\int_{K_N}\Big[\sum_{\ell=0}^{N-1}c^{j,k}_{\ell}\chi_{Z(k\ell)}(x)\Big] d\mu=0.$$
We can write this as:
$$\int_{K_N}\Big[\sum_{\ell=0}^{N-1}c^{j,k}_{\ell}\chi_{Z(k\ell)}(x)\Big]\overline{\chi_{Z(k)}(x)}d\mu=0,$$
i.e.
$$\int_{K_N}\psi_{j,k}(x)\overline{\phi_k(x)}d\mu=0\;\;\text{for $1\leq j\leq N-1$}.$$

We now check and calculate: 
\[\begin{split}
S_i(\psi_{j,k})(x)\;&=\;S_i\Big(\frac{1}{\sqrt{p_k}}\sum_{\ell=0}^{N-1}c^{j,k}_{\ell}\chi_{Z(k\ell)}\Big)(x)=\;\frac{1}{\sqrt{p_k}}\sum_{\ell=0}^{N-1}c^{j,k}_{\ell}S_i(\chi_{Z(k\ell)})(x)\\
&=\;\frac{1}{\sqrt{p_k}}\sum_{\ell=0}^{N-1}c^{j,k}_{\ell}\chi_{Z(i)}(x)\frac{1}{\sqrt{p_i}}\chi_{Z(k\ell)}(\sigma(x))\\
&=\;\frac{1}{\sqrt{p_k}}\frac{1}{\sqrt{p_i}}\sum_{\ell=0}^{N-1}c^{j,k}_{\ell}\chi_{Z(ik\ell)}(x).
\end{split}
\]
Let ${\mathcal V}_0\;=\;\text{span}\{\phi_i:\;i\in\mathbb Z_N\}$ and let
$${\mathcal W}_0\;=\;\text{span}\{\psi_{j,k}:\;k\in\mathbb Z_N,\;1\leq j\leq N-1\}.$$
We have shown ${\mathcal V}_0\perp {\mathcal W}_0$.
We now define
$${\mathcal W}_1\;=\;\text{span}\{S_i(\psi_{j,k}): i,\;k\in \mathbb Z_N,\;1\leq j\;\leq N-1\}.$$
A straightforward calculation shows that 
$\{S_i(\psi_{j,k}): i,\;k\in \mathbb Z_N,\;1\leq j\;\leq N-1\}$ is an orthonormal set of functions.  

We prove now that $({\mathcal V}_0\oplus {\mathcal W}_0)\perp {\mathcal W}_1.$

Let us first fix pairs $(j,k)$ and $(j',k')$ with
$k,\;k'\in \mathbb Z_N$ and $1\leq j,\; j'\leq N-1.$
Fix $i\in \mathbb Z_N.$ Then 
\[\begin{split}
&\int_{K_N}S_i(\psi_{j,k})(x)\overline{\psi_{j',k'}(x)}d\mu\;=\;\frac{1}{\sqrt{p_i}}\int_{K_N}\Big[\frac{1}{\sqrt{p_k}}\sum_{\ell=0}^{N-1}c^{j,k}_{\ell}\chi_{Z(ik\ell)}(x)\Big]\overline{\psi_{j',k'}(x)}\, d\mu\\
&=\;\frac{1}{\sqrt{p_i}}\int_{K_N}\Big[\frac{1}{\sqrt{p_k}}\sum_{\ell=0}^{N-1}c^{j,k}_{\ell}\chi_{Z(ik\ell)}(x)\Big]\overline{\frac{1}{\sqrt{p_{k'}}}\sum_{\ell'=0}^{N-1}c^{j',k'}_{\ell'}\chi_{Z(k'\ell')}(x)}d\mu\\
&=\;\frac{1}{\sqrt{p_i}}\frac{1}{\sqrt{p_kp_{k'}}}\int_{K_N}\sum_{\ell=0}^{N-1}\sum_{\ell'=0}^{N-1}\delta_{i,k'}\delta_{k,\ell'}c^{j,k}_{\ell}\overline{c^{j',k'}_{\ell'}}\chi_{Z(ik\ell)}(x)d\mu\\
&=\;\frac{1}{\sqrt{p_i}}\frac{1}{\sqrt{p_kp_{k'}}}\int_{K_N}\sum_{\ell=0}^{N-1}\delta_{i,k'}
c^{j,k}_{\ell}\overline{c^{j',k'}_{k}}\chi_{Z(ik\ell)}(x)d\mu\;=\frac{1}{\sqrt{p_i}}p_ip_k\delta_{i,k'}\frac{1}{\sqrt{p_kp_{k'}}}\sum_{\ell=0}^{N-1}c^{j,k}_{\ell}\overline{c^{j',k'}_{k}}p_{\ell}\\
&=\frac{1}{\sqrt{p_i}}p_ip_k\delta_{i,k'}\overline{c^{j',k'}_{k}}\frac{1}{\sqrt{p_kp_{k'}}}\sum_{\ell=0}^{N-1}c^{j,k}_{\ell}p_{\ell}\;=\;0,
\end{split}\]
since $\sum_{\ell=0}^{N-1}c^{j,k}_{\ell}p_{\ell}=0$ for $1\leq j\leq N-1.$

It follows that
$${\mathcal W}_0\perp {\mathcal W}_1.$$
We now show that ${\mathcal V}_0\perp {\mathcal W}_1.$
Fix $i\in\mathbb Z_N$. 
Let $i',\,k\in\mathbb Z_N$ and fix $j\in\{1,2,\cdots, N-1\}.$  Then:
\begin{align*}\langle \phi_i, S_{i'}(\psi_{j,k}) \rangle\; & =\;\frac{1}{\sqrt{p_i}}\int_{K_N}\chi_{Z(i)}(x)\overline{\frac{1}{\sqrt{p_{i'}}}\frac{1}{\sqrt{p_k}}\sum_{\ell=0}^{N-1}c^{j,k}_{\ell}\chi_{Z(i'k\ell)}(x)}d\mu\\
&=\; \frac{1}{\sqrt{p_ip_{i'}}}\frac{1}{\sqrt{p_k}}\sum_{\ell=0}^{N-1}\int_{K_N} \chi_{Z(i)}(x) \overline{c^{j,k}_{\ell}}\chi_{Z(i'k\ell)}(x)d\mu\\
&=\;\frac{1}{\sqrt{p_ip_{i'} p_k}}\delta_{i,i'}\sum_{\ell=0}^{N-1}\int_{K_N}  \overline{c^{j,k}_{\ell}}\chi_{Z(i'k\ell)}(x)d\mu\\
& =\;\delta_{i,i'}\frac{1}{\sqrt{p_ip_{i'}p_k}}\sum_{\ell=0}^{N-1}\overline{c^{j,k}_{\ell}}p_{i'}p_kp_{\ell} \\
& =\;\delta_{i,i'}\frac{1}{\sqrt{p_ip_{i'}p_k}}p_{i'}p_k\Big[\sum_{\ell=0}^{N-1}\overline{c^{j,k}_{\ell}}p_{\ell}\Big].\end{align*}
In order for this value to have a chance of being nonzero we need $i=i'$.  But even if that happens we get:
$$\langle \phi_i, S_{i}(\psi_{j,k})\rangle=p_k\Big[\frac{1}{\sqrt{p_k}}\sum_{\ell=0}^{N-1}\overline{c^{j,k}_{\ell}}p_{\ell}\Big]=
\sqrt{p_k}\cdot \langle c^{j,k}, (1,1,\cdots, 1)\rangle_{\mathbb C^N},$$ 
which is equal to $0$ for $j\in \mathbb Z_N\backslash \{0\}.$  Thus $\mathcal{V}_0$ is orthogonal to $\mathcal{W}_1$.

We now prove by induction that if for every $n\geq 0$ we define
$${\mathcal W}_n\;=\;\text{span}\{S_w(\psi_{j,k}):\;w\;\text{ is a word of length}\;n,\;k\in\mathbb Z_N\;\;\text{and}\;\;1\leq j\leq N-1\},$$
then for all $n\geq 0,$
$${\mathcal W}_{n+1}\perp \Big[{\mathcal V}_0\oplus \bigoplus_{k=0}^n{\mathcal W}_k\Big].$$
We have proven this for $n=0$ directly.  We now assume it is true for $\ell=n$ and prove it is true for $\ell=n+1,$ i.e. let us prove that
${\mathcal W}_{n+2}$ is orthogonal to $ [{\mathcal V}_0\oplus \bigoplus_{k=0}^{n+1}{\mathcal W}_k].$
We first note that if $w$ is a word of length $n+2$ and $w'$ is a word of length $s$ where $1\leq  s\leq n+1,$ and if $k, k'\in \mathbb Z_N$ and $1\leq j, j'\leq N-1,$ then there are unique $i, i'\in \Z_N$ such that
$$\langle S_w(\psi_{j,k}), S_{w'}(\psi_{j',k'})\rangle_{L^2(K_N,\mu)}=\;\langle S_iS_{w_1}(\psi_{j,k}), S_{i'}S_{w_1'}(\psi_{j',k'})\rangle_{L^2(K_N,\mu)}.$$
where $w_1$ is a word of length $n+1$ and $w_1'$ is a word of length $s-1$ for $0\leq s-1\leq n.$
This then is equal to
$$\langle S_{w_1}(\psi_{j,k}), S_{i}^*S_{i'}S_{w_1'}(\psi_{j',k'})\rangle_{L^2(K_N,\mu)};$$
{since $S_i(\psi)(w) = \chi_{Z(i)}(x) p_i^{-1/2} \psi(\sigma(x))$, one can check that
\[S_i^*(\psi)(w) = p_i^{1/2} \psi(iw).\]}

 It follows that $S_{i}^*S_{i'} = \delta_{i, i'} I$.   If $i=i'$ so that $S_{i}^*S_{i'} = I, $ 
we obtain:
$$\langle S_w(\psi_{j,k}), S_{w'}(\psi_{j',k'})\rangle_{L^2(K_N,\mu)} = \langle S_{w_1}(\psi_{j,k}), S_{w_1'}(\psi_{j',k'})\rangle_{L^2(K_N,\mu)},$$
which is equal to $0$ by the induction hypothesis.

Thus, in either case,
$${\mathcal W}_{n+2}\perp \Big[\bigoplus_{k=1}^{n+1}{\mathcal W}_k\Big].$$
Now suppose $\psi_{j,k}\in{\mathcal W}_0$, $w$ is a word of length $n+2$, $k'\in \mathbb Z_N$ and $1\leq j'\leq N-1.$
Then,
$$\langle \psi_{j,k}, S_w(\psi_{j',k'})\rangle_{L^2(K_N,\mu)}\;=\;
\langle\frac{1}{\sqrt{p_k}}\sum_{\ell=0}^{N-1}c^{j,k}_{\ell}\chi_{Z(k\ell)}, S_w(\psi_{j',k'})\rangle_{L^2(K_N,\mu)}.$$
Write $w=i_1i_2\cdots i_{n+1}i_{n+2}$.  Then
$$S_w(\chi_{Z(k'\ell')})\;=\;S_{i_1}S_{i_2}\dots S_{i_{n+2}}(\chi_{Z(k'l')})\;=\;\frac{1}{\prod_{v=1}^{n+2}\sqrt{p_{i_v}}}\chi_{Z(i_1i_2\cdots i_{n+1}i_{n+2}k'\ell')},$$ so that
 $$\langle\frac{1}{\sqrt{p_k}}\sum_{\ell=0}^{N-1}c^{j,k}_{\ell}\chi_{Z(k\ell)}, S_w(\psi_{j',k'})\rangle_{L^2(K_N,\mu)}\; \qquad\qquad \qquad \qquad \qquad  \qquad$$
 $$ \quad \quad  =\;\frac{1}{\prod_{v=1}^{n+2}\sqrt{p_{i_v}}}\frac{1}{\sqrt{p_kp_{k'}}}\sum_{\ell=0}^{N-1}\sum_{\ell'=0}^{N-1} \langle c^{j,k}_{\ell}\chi_{Z(k\ell)},\;c^{j',k'}_{\ell'}\chi_{Z((i_1i_2\cdots i_{n+1}i_{n+2}k'\ell')}\rangle_{L^2(K_N,\mu)} \qquad \qquad$$
 $$\quad=\;\frac{1}{\prod_{v=1}^{n+2}\sqrt{p_{i_v}}}\frac{1}{\sqrt{p_kp_{k'}}}\sum_{\ell=0}^{N-1}\sum_{\ell'=0}^{N-1} \delta_{k,i_1}\delta_{\ell, i_2}\langle c^{j,k}_{\ell}\chi_{Z(k\ell)},\;c^{j',k'}_{\ell'}\chi_{Z((i_1i_2\cdots i_{n+1}i_{n+2}k'\ell')}\rangle_{L^2(K_N,\mu)}$$
 $$=\;\frac{1}{\prod_{v=1}^{n+2}\sqrt{p_{i_v}}}\delta_{k,i_1}\frac{1}{\sqrt{p_kp_{k'}}}\sum_{\ell'=0}^{N-1} \langle c^{j,k}_{i_2}\chi_{Z(ki_2)},\;c^{j',k'}_{\ell'}\chi_{Z(i_1i_2\cdots i_{n+1}i_{n+2}k'\ell')}\rangle_{L^2(K_N,\mu)}. \quad\quad \;\;$$
 { This quantity will only be nonzero if $k = i_1$;} in this case we get:
$$\langle \psi_{j,k}, S_w(\psi_{j',k'})\rangle_{L^2(K_N,\mu)}\;=\;\frac{1}{\prod_{v=1}^{n+2}\sqrt{p_{i_v}}}\frac{1}{\sqrt{p_{i_1}p_{k'}}}\sum_{\ell'=0}^{N-1} \langle c^{j,i_1}_{i_2}\chi_{Z(i_1i_2)},\;c^{j',k'}_{\ell'}\chi_{Z((i_1i_2\cdots i_{n+1}i_{n+2}k'\ell')}\rangle_{L^2(K_N,\mu)}$$
$$\;=\;\frac{1}{\prod_{v=1}^{n+2}\sqrt{p_{i_v}}}\frac{1}{\sqrt{p_{i_1}p_{k'}}}\sum_{\ell'=0}^{N-1}\int_{K_N} c^{j,i_1}_{i_2}\overline{c^{j',k'}_{\ell'}}\chi_{Z(i_1i_2)}(x)\chi_{Z((i_1i_2\cdots i_{n+1}i_{n+2}k'\ell')}(x)d\mu \qquad \qquad $$
$$\;=\;\frac{1}{\prod_{v=1}^{n+2}\sqrt{p_{i_v}}}\sqrt{p_{k'}}\frac{1}{\sqrt{p_{i_1}}}\Big(\prod_{v=1}^{n+2}p_{i_v}\Big)c^{j,i_1}_{i_2}
\sum_{\ell'=0}^{N-1}\overline{c^{j',k'}_{\ell'}}p_{\ell'}$$
$$\;=\;\frac{\sqrt{p_{k'}}}{\sqrt{p_{i_1}}}c^{j,i_1}_{i_2}\Big(\prod_{v=1}^{n+2}\sqrt{p_{i_v}}\Big)\sum_{\ell'=0}^{N-1}1\cdot \overline{c^{j',k'}_{\ell'}}p_{\ell'}\;=\;0.$$
So in all cases, $\langle \psi_{j,k}, S_w(\psi_{j',k'})\rangle_{L^2(K_N,\mu)} = 0$, and we have ${\mathcal W}_{n+2}\perp {\mathcal W}_0.$

Finally, we want to show that ${\mathcal W}_{n+2}\perp {\mathcal V}_0.$  Let $\phi_k\in {\mathcal V}_0$ be fixed and let $S_w(\psi_{j',k'})\in {\mathcal W}_{n+2}$ for $w=i_1i_2\cdots i_{n+2}$ a word of length $n+2$ and
$k'\in\mathbb Z_N,\;j'\in\{1,2,\cdots, N-1\}.$
Then
$$\langle \phi_k, S_w(\psi_{j',k'})\rangle_{L^2(K_N,\mu)}\;=\;
\frac{1}{\sqrt{p_k}}\int_{K_N}\chi_{Z(k)}\overline{\Big(\prod_{v=1}^{n+2}\frac{1}{\sqrt{p_{i_v}}}\Big)\frac{1}{\sqrt{p_{k'}}}\sum_{\ell=0}^{N-1} c^{j',k'}_{\ell}\chi_{Z(i_1i_2\cdots i_{n+1}i_{n+2}k'\ell)}  }d\mu$$
$$=\;\frac{1}{\sqrt{p_kp_{k'}}}\Big(\prod_{v=1}^{n+2}\frac{1}{\sqrt{p_{i_v}}}\Big)\delta_{k,i_1}\int_{K_N}\overline{\sum_{\ell=0}^{N-1} c^{j',k'}_{\ell}\chi_{Z(i_1i_2\cdots i_{n+1}i_{n+2}k'\ell)}  }d\mu$$
$$\;=\;\frac{1}{\sqrt{p_kp_{k'}}}\Big(\prod_{v=1}^{n+2}\frac{1}{\sqrt{p_{i_v}}}\Big)\delta_{k,i_1}\Big[\prod_{v=1}^{n+2}p_{i_v}\Big]p_{k'}\sum_{\ell=0}^{N-1} \overline{c^{j',k'}_{\ell}}p_{\ell}$$
$$\;=\;\delta_{k,i_1}\frac{1}{\sqrt{p_k}}\sqrt{p_{k'}}\Big[\prod_{v=1}^{n+2}\sqrt{p_{i_v}}\Big]\sum_{\ell=0}^{N-1}1\cdot  \overline{c^{j',k'}_{\ell}}p_{\ell}\;=\;0.$$
It follows that ${\mathcal W}_{n+2}\perp {\mathcal V}_0,$ and we have proved the desired result by induction.
\end{proof}

 \begin{rmk} Notice that the  proof of Theorem \ref{MPwaveletsMarkov} also extends to any other  measure with shift operators having constant Radon-Nykodym derivative on cylinder sets.
\end{rmk}

\section{$C^*$-algebras corresponding to directed graphs and higher-rank graphs}

\subsection{Directed graphs, higher-rank graphs and $C^*$-algebras}
 A \textbf{directed graph} $E$ consists of a countable collection of vertices $E^0$ and edges $E^1$ with range and source maps $r,s: E^1\rightarrow E^0$. We view an edge $e$ as being directed from its source $s(e)$ to its range $r(e)$. A \textbf{path} is a string of edges $e_1e_2\dots e_n$ where $s(e_i)=r(e_{i+1})$ for $i=1,2,\dots, n-1$. The \textbf{length} of a path is the number of edges in the string.
As mentioned in the introduction, the graph $C^*$-algebra $C^*(E)$ is the universal $C^*$-algebra generated by a set of projections $\{p_v: v\in E^0\}$ and a set of partial isometries $\{s_e: e\in E^1\}$ that satisfy the Cuntz-Krieger relations. (These are relations (CK1)--(CK4) in \eqref{eq:CK1--4} below).

Higher-rank graphs, also called $k$-graphs, are  higher-dimensional analogues of directed graphs.
By definition, a higher-rank graph is a small category $\Lambda$ with a functor $d$ from the set $\Lambda$ of morphisms to $\N^k$ satisfying the \textbf{factorization property} : if $d(\lambda)=m+n$, then there exist unique $\alpha,\beta\in \Lambda$ such that $d(\alpha)=m$, $d(\beta)=n$, and $\lambda=\alpha\beta$.\footnote{We think of $\N^k$ as a category with one object, namely $0$, and with composition of morphisms given by addition.} Note that we write $\{e_1,\dots,e_k\}$ for the standard basis of $\N^k$.   We often call a morphism $\lambda\in\Lambda$ a \textbf{path} (or an element) in $\Lambda$, and call $\Lambda^0:= d^{-1}(0)$ the set of \textbf{vertices} of $\Lambda$; then the factorization property gives us \textbf{range} and \textbf{source} maps  $r,s:\Lambda\to \Lambda^0$. 
For $v,w\in\Lambda^0$ and $n\in \N^k$, we write
\[
v\Lambda^nw:=\{\lambda\in\Lambda : d(\lambda)=n, r(\lambda)=v, s(\lambda)=w\}.
\]
Thus $\lambda\in v\Lambda^n w$ means that $\lambda$ is a path that starts at $w$, ends at $v$ and has  shape $n$.  Given two paths $\lambda, \nu \in \Lambda$,We can think of $\lambda$ as a $k$-cube in a $k$-colored graph 
as in the next example.

\begin{example}\label{ex:k-graph}
Consider the following two 2-colored graphs $\Gamma_1$ on the left and $\Gamma_2$ on the right. In both graphs, the dashed edges are red and the solid edges are blue.  (The sphere-like 2-graph picture below is taken from \cite{KPS-hlogy} and we would like to thank them for sharing their picture). {We will explain how $\Gamma_1, \Gamma_2$ give rise to 2-graphs $\Lambda_i$; the degree functor $d: \Lambda_i \to \N^2$ will count the number of red and blue edges in a path $\lambda \in \Lambda_i$.}
\begin{equation*}
\parbox[c]{0.9\textwidth}{\hfill
\begin{tikzpicture}[scale=1.5]
    \node[inner sep= 1pt] (100) at (1,0,0) {$u$};
    \node[inner sep= 1pt] (-100) at (-1,0,0) {$v$};
    \node[inner sep= 1pt] (010) at (0,1,0) {$w$};
    \node[inner sep= 1pt] (0-10) at (0,-1,0) {$x$};
    \node[inner sep= 1pt] (001) at (0,0,1) {$y$};
    \node[inner sep= 1pt] (00-1) at (0,0,-1) {$z$};
    \draw[-latex, blue] (100) .. controls +(0,0.6,0) and +(0.6,0,0) .. (010) node[pos=0.5, anchor=south west] {\color{black}$a$};
    \draw[-latex, red, dashed] (100) .. controls +(0,-0.6,0) and +(0.6,0,0) .. (0-10) node[pos=0.5, anchor=north west] {\color{black}$e$};
    \draw[-latex, blue] (-100) .. controls +(0,0.6,0) and +(-0.6,0,0) .. (010) node[pos=0.5, anchor=south east] {\color{black}$b$};
    \draw[-latex, red, dashed] (-100) .. controls +(0,-0.6,0) and +(-0.6,0,0) .. (0-10) node[pos=0.5, anchor=north east] {\color{black}$f$};
    \draw[-latex, red, dashed] (010) .. controls +(0,0,0.6) and +(0,0.6,0) .. (001) node[pos=0.5, anchor=east] {\color{black}$g$};
    \draw[-latex, red, dashed] (010) .. controls +(0,0,-0.6) and +(0,0.6,0) .. (00-1) node[pos=0.8, anchor=east] {\color{black}$h$};
    \draw[-latex, blue] (0-10) .. controls +(0,0,0.6) and +(0,-0.6,0) .. (001) node[pos=0.85, anchor=west] {\color{black}$c$};
    \draw[-latex, blue] (0-10) .. controls +(0,0,-0.6) and +(0,-0.6,0) .. (00-1) node[pos=0.5, anchor=south east] {\color{black}$d$};
\end{tikzpicture}
\hfill
\begin{tikzpicture}[scale=1.5]
\node[inner sep=0.5pt, circle] (v) at (0,0) {$v$};
\draw[-latex, blue] (v) edge [out=150, in=210, loop, min distance=25, looseness=2.5] (v);
\draw[-latex, blue] (v) edge [out=135, in=225, loop, min distance=50, looseness=2.5] (v);
\draw[-latex, red, dashed] (v) edge [out=-20, in=40, loop, min distance=30, looseness=2.5] (v);
\node at (-0.8, 0) {$f_1$}; \node at (-1.2, 0.2) {$f_2$}; \node at (0.95, 0.05) {$e$};
\end{tikzpicture}
\hfill \hfill }
\end{equation*}
{Depending on the choice of factorization rules, these $2$-colored graphs can give rise to several different $2$-graphs.}

There is only one 2-graph $\Lambda_1$ with the 2-colored graph $\Gamma_1$; the factorization rules of $\Lambda_1$ are given by
\[\begin{split}
&{h}{b}={d}{f},\quad {h}{a}={d}{e},\quad {g}{b}={c}{f},\quad \text{and}\quad {g}{a}={c}{e}.\\
\end{split}\]
Note that the path $hb$ has degree $e_1+e_2 = (1,1)\in \N^2$. The factorization rule $hb=df$ means that the element $hb= df$ of $\Lambda_1$ can be understood as the following square; the 2-graph $\Lambda_1$ has four such squares (paths, or elements).
\[
\begin{tikzpicture}[yscale=0.25,xscale=0.25]
    \node[inner sep=0pt] (m) at (0,0) {$z$};
    \node[inner sep=0pt] (n) at (0,8) {$w$};
    \node[inner sep=0pt] (e) at (8,0) {$x$};
    \node[inner sep=0pt] (ne) at (8,8) {$v$};
    \draw[-latex, blue] (ne.west)--(n.east) node[pos=0.5,above] {\color{black}$b$};
    \draw[-latex, blue] (e.west)--(m.east) node[pos=0.5,below] {\color{black}$d$};
    \draw[-latex, red, dashed] (n.south)--(m.north) node[pos=0.5,left] {\color{black}$h$};
    \draw[-latex, red, dashed] (ne.south)--(e.north) node[pos=0.5,right] {\color{black}$f$};
\end{tikzpicture}
\]
However, on $\Gamma_2$, there are two 2-graphs $\Lambda_2$ and $\Lambda_3$ associated to the 2-colored graph $\Gamma_2$. The factorization rules for $\Lambda_2$ are given by
\[
f_1e=ef_1 \quad \text{and}\quad f_2e=ef_2.
\]
The factorization rules for $\Lambda_3$ are given by
\[
f_1e=ef_2\quad \text{and}\quad f_2e=ef_1.
\]
{ We leave it to the reader to check that both choices of factorization rules give rise to a well-defined functor $d: \Lambda_i \to \N^2$ satisfying the factorization property, where $d(\lambda) = (m,n)$ implies that the path $\lambda$ contains $m$ red edges and $n$ blue edges.}

\end{example}

We say that $\Lambda$ is \textbf{finite} if $\Lambda^n$ is finite for all $n\in\N^k$ and is \textbf{strongly connected} if $v\Lambda w\ne \emptyset$ for all $v,w\in \Lambda^0$. We say that $k$-graph has \textbf{no sources} if $v\Lambda^{e_i}\ne \emptyset$ for all $v\in\Lambda^0$ and for all $1\le i\le k$. Note that we only consider finite $k$-graphs with no sources in this section. Define an {\bf infinite path} in $\Lambda$ to be a  morphism  from $\Omega_k$ to $\Lambda$. To be more precise, consider the set
  \[
  \Omega_k:=\{(p,q)\in \N^k\times \N^k:p\le q\}.
  \]
  Then $\Omega_k$ is a $k$-graph with  $\Omega_k^0=\N^k$; the range and source maps $r,s:\Omega_k\to \N^k$ given by $r(p,q):=p$ and $s(p,q):=q$; and the degree functor $d$ given by $d(p,q)=q-p$. Note that the composition is given by $(p,q)(q,m)=(p,m)$ and $\Omega_k$ has no sources. An infinite path in a $k$-graph $\Lambda$ is a $k$-graph morphism $x:\Omega_k\to \Lambda$ and the infinite path space $\Lambda^\infty$ is the collection of all infinite paths.
  The space $\Lambda^\infty$ is equipped with a compact open topology generated by the cylinder sets $\{Z(\lambda):\lambda\in\Lambda\}$, where
\[
Z(\lambda)=\{x\in \Lambda^\infty : x(0, d(\lambda))=\lambda\}.
\]
For $p\in \N^k$, there is a shift map $\sigma^p$ on $\Lambda^\infty$ given by $\sigma^p(x)(m,n)=x(m+p,n+p)$ for $x\in\Lambda^\infty$.  {For more details on the above constructions, see Section 2 of \cite{KP}.}

For each $1\le i\le k$, we write $A_i$ for the \textbf{vertex matrices} for $\Lambda$, where the entries $A_i(v,w)$ are the number of paths from $w$ to $v$ with degree $e_i$. Because of the factorization property, the vertex matrices $A_i$ commute, and if $\Lambda$ is strongly connected, Lemma 4.1 of \cite{aHLRS3} establishes that there is a unique normalized
Perron-Frobenius eigenvector for the matrices $A_i$. The Perron-Frobenius eigenvector $x^\Lambda$ is the unique vector $x^\Lambda \in (0,\infty)^{|\Lambda^0|}$ with $\ell^1$-norm 1 
which is a common eigenvector of the matrices $A_i$. It is well known now (see~\cite{aHLRS3} Theorem 8.1)
that for a strongly connected finite $k$-graph $\Lambda$, there is a unique Borel probability measure $M$ on $\Lambda^\infty$, called the Perron-Frobenius measure, such that 
\begin{equation}\label{eq:measure}
M(Z(\lambda))=\rho(\Lambda)^{-d(\lambda)}x^\Lambda_{s(\lambda)}\quad \text{for all $\lambda\in\Lambda$,}
\end{equation}
where $\rho(\Lambda)=(\rho(A_1),\dots,\rho(A_k))$. 
See \cite{aHLRS3} for the construction of the measure $M$.

For a finite $k$-graph with no sources, the Cuntz-Krieger $C^*$-algebra $C^*(\Lambda)$, often called a $k$-graph $C^*$-algebra, is a universal $C^*$-algebra generated by a collection of partial isometries $\{t_\lambda:\lambda\in\Lambda\}$ satisfying the following Cuntz-Krieger relations:
\begin{equation}\label{eq:CK1--4}\begin{split}
&\text{(CK1) $\{t_v: v \in \Lambda\z\}$ is a family of mutually orthogonal projections},\\
&\text{(CK2) $t_\mu t_\lambda = t_{\mu \lambda}$ whenever $s(\mu) = r(\lambda)$},\\
&\text{(CK3) $t_\mu^* t_\mu = t_{s(\mu)}$ for all $\mu$, and}\\
&\text{(CK4) for all $v\in\Lambda^0$ and $n\in\N^k$, we have}\quad t_v=\sum_{\lambda\in v\Lambda^n} t_\lambda t^*_\lambda.\\
\end{split}
\end{equation}
Also we can show that
\[
C^*(\Lambda)=\clsp\{t_\lambda t^*_\nu:\lambda,\nu\in \Lambda, s(\lambda)=s(\nu)\}.
\]

\subsection{$\Lambda$-semibranching function systems and representations of $C^*(\Lambda)$}
\label{sec:sbfs}
We briefly review 
the definition of a $\Lambda$-semibranching function system given in \cite{FGKP}, then discuss the recent works in \cite{FGKP}.

Compare the following definition with the definition of a semibranching function system given in Definition \ref{semibranchingdef}.

\begin{defn}
\label{def-lambda-brach-system}
Let $\Lambda$ be a finite $k$-graph and let $(X, \mu)$ be a measure space.  A \textbf{$\Lambda$-semibranching function system} on $(X, \mu)$ is a collection $\{D_\lambda\}_{\lambda \in \Lambda}$ of measurable subsets of $X$, together with a family of \textbf{prefixing maps} $\{\tau_\lambda: D_\lambda \to X\}_{\lambda \in \Lambda}$, and a family of coding maps $\{\tau^m: X \to X\}_{m \in \N^k}$, such that
\begin{itemize}
\item[(a)] For each $m \in \N^k$, the family $\{\tau_\lambda: d(\lambda) = m\}$ is a semibranching function system, with coding map $\tau^m$.
\item[(b)] If $ v \in \Lambda^0$, then  $\tau_v = id$,  and $\mu(D_v) > 0$.
\item[(c)] Let $R_\lambda = \tau_\lambda D_\lambda$. For each $\lambda \in \Lambda, \nu \in s(\lambda)\Lambda$, we have $R_\nu \subseteq D_\lambda$ (up to a set of measure 0), and
\[\tau_{\lambda} \tau_\nu = \tau_{\lambda \nu}\text{ a.e.}\]
 (Note that this implies that up to a set of measure 0, $D_{\lambda \nu} = D_\nu$ whenever $s(\lambda) = r(\nu)$).
\item[(d)] The coding maps satisfy $\tau^m \circ \tau^n = \tau^{m+n}$ for any $m, n \in \N^k$.  (Note that this implies that the coding maps pairwise commute.)
\end{itemize}
\end{defn}

\begin{rmk}\label{rmk:lambda-SBFS1}
\begin{itemize}
\item[(1)] The key condition of a $\Lambda$-semibranching function system is the condition (c). The immediate consequence is that $D_\lambda=D_{s(\lambda)}$ and $R_\lambda\subset R_{r(\lambda)}$ for all $\lambda\in\Lambda$. Also for $\lambda,\nu\in\Lambda$, if $s(\lambda)=r(\nu)$, then $x\in R_{\lambda\nu}$ if and only if $x\in R_\lambda$ and $\tau^{d(\lambda)}(x)\in R_\nu$.
\item[(2)] When $E$ is a finite directed graph, the definition of an $E$-semibranching function system in Definition~\ref{def-lambda-brach-system} is not equivalent to the semibranching function system of $E$ in Definition~\ref{semibranchingdef}. First of all, the set of domains $\{D_e: e\in E\}$ in Definition~\ref{semibranchingdef} neither have to be mutually disjoint nor the union to be the whole space $X$ up to a set of measure zero. But since Definition~\ref{def-lambda-brach-system}(b) requires that $D_v=R_v$ for $v\in E^0$, the condition (a) of Definition~\ref{def-lambda-brach-system} implies that $\mu(D_v\cap D_w)=\mu(R_v\cap R_w)=0$ for $v\ne w$, and $\mu(X\setminus \bigcup_{v\in E^0} R_v)=\mu(X\setminus \bigcup_{v\in E^0} D_v)=0$. As seen in Remark~\ref{rmk:lambda-SBFS1}, $D_e=D_{s(e)}$ for any $e\in E$, and hence $\mu(D_e\cap D_f) =0$ if $s(e)\ne s(f)$.

\item[(3)] It turned out that the conditions of Definition~\ref{def-lambda-brach-system} are a lot stronger than what we expected. In particular, when we have a finite  directed graph $E$, the conditions of Definition~\ref{def-lambda-brach-system} implies what is called condition (C-K) in \cite{BJ}:
\[
D_v=\bigcup_{e\in E^m} R_\lambda\quad \text{for all $v\in E^0$ and $m\in \N$}
\]
up to a measure zero set. The condition (C-K) was assumed in Theorem~2.22 in \cite{BJ} to obtain a representation of $C^*(\Lambda)$ on $L^2(X,\mu)$, where a semibranching function system is given on the measure space $(X,\mu)$.
\end{itemize}
\end{rmk}

\begin{example}\label{ex:lambda-SBFS}
Let $\Lambda$ be a strongly connected finite $k$-graph. As seen before, there is a Borel probability measure $M$ on $\Lambda^\infty$ given by the formula of \eqref{eq:measure}.  To construct a $\Lambda$-semibranching function system on $(\Lambda^\infty, M)$, we define, for $\lambda\in\Lambda$, 
 prefixing maps $\sigma_\lambda:Z(s(\lambda))\to Z(\lambda)$ by\[
\sigma_\lambda(x)=\lambda x,
\]
where we denote by $y := \lambda x$ the unique infinite path $y: \Omega_k \to \Lambda$ such that $y( (0, d(\lambda))) = \lambda$ and $\sigma^{d(\lambda)}(y) = x$.

For $m\in\N^k$ we define the coding maps $\sigma^m:\Lambda^\infty \to \Lambda^\infty$ by
\[
\sigma^m(x)=x(m,\infty).
\]
Then $\{\sigma_\lambda\}_{\lambda\in\Lambda}$ with $\{\sigma^m\}_{m\in\N^k}$ form a $\Lambda$-semibranching function system on $(\Lambda^\infty, M)$ as shown in Proposition~3.4 of \cite{FGKP}.

\end{example}

When a $k$-graph $\Lambda$ is finite and has no sources, one of the main theorems of \cite{FGKP}, Theorem~3.5, says that the operators $S_\lambda$ associated to a $\Lambda$-semibranching function system on a measure space $(X,\mu)$ given by
\begin{equation}\label{eq:S_lambda}
S_\lambda \xi(x):=\chi_{R_\lambda}(x)(\Phi_{\tau_\lambda}(\tau^{d(\lambda)}(x)))^{-1/2}\xi(\tau^{d(\lambda)}(x))
\end{equation}
generate a representation of $C^*(\Lambda)$ on $L^2(X,\mu)$, where 
\[\Phi_{\tau_\lambda} = \frac{d(\mu \circ \tau_\lambda)}{d\mu}\]
is positive a.e. on $D_\lambda$.

In addition, if we have a strongly connected finite $k$-graph $\Lambda$, then the $\Lambda$-semibranching function system of Example \ref{ex:lambda-SBFS} on the Borel probability measure space $(\Lambda^\infty, M)$ 
gives rise to a representation of $C^*(\Lambda)$ on $L^2(\Lambda^\infty, M)$ which is faithful if and only if $\Lambda$ is aperiodic. (See Theorem~3.6 of \cite{FGKP}).

Moreover, if the vertex matrices $A_i$ associated to a strongly connected finite $k$-graph $\Lambda$ 
are all $\{0,1\}$-matrices, then we can construct $\Lambda$-semibranching function systems on a fractal subspace $X$ of $[0,1]$. 
In particular, let $N=|\Lambda^0|$ and label the vertices of $\Lambda$ by the integers, $0, 1,\dots, N-1$. Let $\rho(A)$ denote the spectral radius 
of the product $A:=A_1\dots A_k$. Then consider the embedding $\Psi: \Lambda^\infty\to [0,1]$ given by interpreting the sequence of vertices of a given infinite path as an  $N$-adic decimal.  
Then $X=\Psi(\Lambda^\infty)$ is a Cantor-type fractal subspace of $[0,1]$ and the Hausdorff measure $\mu$ on $X$ is given by the Borel probability measure $M$ on $\Lambda^\infty$ via $\Psi$. The prefixing maps $\{\tau_\lambda\}$ and coding maps $\{\tau^{d(\lambda)}\}$ on $(X,\mu)$ are induced from the prefixing maps $\{\sigma_\lambda\}$ and coding maps $\{\sigma^m\}$ on $(\Lambda^\infty, M)$ given in Example~\ref{ex:lambda-SBFS}. Moreover, if $s$ denotes the Hausdorff dimension of $X$, we have
\[
N^{ks}=\rho(A),\quad\text{and}\quad s=\frac{1}{k}\frac{\ln \rho(A)}{\ln N}.
\]
See Section~3.2 of \cite{FGKP} for further details.


\section{Wavelets on $L^2(\Lambda^\infty, M)$}
\label{sec:wavelets}

Let $\Lambda$ be a strongly connected finite $k$-graph. As seen in the previous section, there is a Borel probability measure $M$ on the infinite path space $\Lambda^\infty$ given by, for $\lambda\in \Lambda$,
\[
M(Z(\lambda))=\rho(\Lambda)^{-d(\lambda)} x^\Lambda_{s(\lambda)},
\]
where $\rho(\Lambda)=(\rho(A_i))_{1\le i\le k}$ and $x^\Lambda$ is the unimodular Perron-Frobenius eigenvector of $\Lambda$.
We now proceed to generalize the wavelet decomposition of $L^2(\Lambda^\infty, M)$ that we constructed in Section 4 of \cite{FGKP}.  In that paper, we built an orthonormal decomposition of $L^2(\Lambda^\infty, M)$, which we termed a \textbf{wavelet decomposition}, following Section 3 of \cite{MP}.
Here,  our wavelet decomposition is constructed by applying (some of) the operators $S_\lambda$ of 
Example \ref{ex:lambda-SBFS} and Equation \eqref{eq:S_lambda}
to a basic family of functions in $L^2(\Lambda^\infty, M)$.  Instead of choosing the finite paths $\lambda$ whose degrees are associated to $k$-cubes, we will construct them from isometries given by paths whose degrees are given by $k$-rectangles.   One way to interpret our main result below (Theorem \ref{Wavelets-Theo}) is to say that for any rectangle $(j_1, j_2, \ldots, j_k) \in \N^k$ with no zero entries, the cofinal set $\{n \cdot (j_1, j_2, \ldots, j_k): n \in \N\} \subseteq \N^k$ gives rise to an orthonormal decomposition of $L^2(\Lambda^\infty, M)$.

While we can use the same procedure to obtain a family of orthonormal functions in $L^2(X, \mu)$ whenever we have a $\Lambda$-semibranching function system on $(X, \mu)$, we cannot establish in general that this orthonormal decomposition densely spans $L^2(X, \mu)$ -- we have no analogue of Lemma \ref{lem:squares-span} for general $\Lambda$-semibranching function systems.  Moreover, by Corollary 3.12 of \cite{FGKP}, every $\Lambda$-semibranching function system  on $\Lambda^{\infty}$ with constant Radon-Nikodym derivative is endowed with the Perron-Frobenius measure. Thus, in this section, we restrict ourselves to the case of $(\Lambda^\infty, M)$.  We also note that our proofs in this section follow the same ideas found in the proof of Theorem 3.2  of \cite{MP}.

For a path $\lambda \in \Lambda$, let $\Theta_\lambda$ denote the characteristic function of $Z(\lambda) \subseteq \Lambda^\infty$.  Recall that $M$ is the unique Borel probability measure on $\Lambda^\infty$ satisfying our desired properties. For the rest of this section, we fix a $k$-tuple
$$(j_1,j_2,\cdots, j_k)\in\mathbb N^k$$
 all of whose coordinates are {\em positive} integers.

\begin{lemma}
\label{lem:squares-span}
Let $\Lambda$ be a strongly connected $k$-graph and fix $J = (j_1, j_2, \ldots, j_k) \in (\Z^+)^k$.
Then the span of the set
\[ S^J := \{\Theta_\lambda: d(\lambda) = (n\cdot j_1, n\cdot j_2, \ldots, n\cdot j_k) \text{ for some } n \in \N\}\]
is dense in $L^2(\Lambda^\infty, M)$.
\label{dense-squares}
\end{lemma}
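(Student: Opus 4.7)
The plan is to reduce the claim to the well-known density of the span of all cylinder characteristic functions in $L^2(\Lambda^\infty, M)$, and then show that each $\Theta_\lambda$ (for arbitrary $\lambda \in \Lambda$) can be written as a \emph{finite} linear combination of elements of $S^J$, using the factorization property and the Cuntz-Krieger relation (CK4).

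First, I would record the standard fact that $\operatorname{span}\{\Theta_\lambda : \lambda \in \Lambda\}$ is dense in $L^2(\Lambda^\infty, M)$. Indeed, $\Lambda^\infty$ is a compact Hausdorff (in fact totally disconnected) space, the cylinder sets $\{Z(\lambda) : \lambda \in \Lambda\}$ form a basis of clopen sets for its topology, and $M$ is a Borel probability measure; thus the linear span of the $\Theta_\lambda$ is a unital $*$-subalgebra of $C(\Lambda^\infty)$ separating points, so by Stone--Weierstrass it is uniformly dense in $C(\Lambda^\infty)$, which in turn is $L^2$-dense in $L^2(\Lambda^\infty, M)$.

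Next, given any $\lambda\in\Lambda$ with $d(\lambda)=m=(m_1,\dots,m_k)$, choose an integer $n\in\N$ large enough that $nj_i\ge m_i$ for all $i$; this is possible because every coordinate of $J$ is strictly positive. Set $p := n\cdot J - m \in \N^k$. The Cuntz-Krieger relation (CK4) at the vertex $s(\lambda)$ gives
\[
t_{s(\lambda)} \;=\; \sum_{\nu \in s(\lambda)\Lambda^{p}} t_\nu t_\nu^*,
\]
and conjugating by $t_\lambda$ and invoking (CK2)--(CK3) yields $t_\lambda t_\lambda^* = \sum_{\nu \in s(\lambda)\Lambda^{p}} t_{\lambda\nu} t_{\lambda\nu}^*$. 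Translating this back to the infinite path space via the factorization property produces the disjoint decomposition
\[
Z(\lambda) \;=\; \bigsqcup_{\nu \in s(\lambda)\Lambda^{p}} Z(\lambda\nu),
\]
so that
\[
\Theta_\lambda \;=\; \sum_{\nu \in s(\lambda)\Lambda^{p}} \Theta_{\lambda\nu}.
\]
Since $\Lambda$ is finite, this sum is finite, and each path $\lambda\nu$ has degree $m + p = n\cdot J$, hence $\Theta_{\lambda\nu} \in S^J$.

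Combining the two steps, every $\Theta_\lambda$ lies in $\operatorname{span}(S^J)$, so $\operatorname{span}(S^J) \supseteq \operatorname{span}\{\Theta_\lambda : \lambda\in\Lambda\}$, and the latter is dense in $L^2(\Lambda^\infty, M)$. There is no real obstacle here; the only small point worth noting is the need for strict positivity of each $j_i$, which is exactly what allows the choice of $n$ making $n\cdot J \ge m$ coordinatewise and hence forces the argument to work for arbitrary degrees $m\in\N^k$.
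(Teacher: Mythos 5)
Your proposal is correct and follows essentially the same route as the paper: both arguments reduce to writing an arbitrary $\Theta_\mu$ as the finite sum $\sum_{\nu}\Theta_{\mu\nu}$ over the paths $\nu$ of degree $nJ - d(\mu)$ appended at $s(\mu)$, which rests on the factorization property (your detour through (CK4) is just an algebraic restatement of the same disjoint decomposition $Z(\mu)=\bigsqcup_\nu Z(\mu\nu)$ that the paper proves directly), and then invoke density of the span of cylinder characteristic functions. The Stone--Weierstrass justification you give for that last density step is a fine, slightly more explicit version of the paper's appeal to the cylinder sets forming a basis for the topology.
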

\begin{proof}
Let $\mu \in \Lambda$.  We will show that we can write $\Theta_\mu$ as a linear combination of functions from $S^J$.

Suppose $d(\mu) = (m_1, \ldots, m_k)$.  Let $m = \min\{N>0: N\cdot j_i-m_i\geq 0 \;\;\text{for}\;\; 1\leq i\leq k \},$ and let $n = (m\cdot j_1, m\cdot j_2, \ldots, m\cdot j_k) - d(\mu)$.  Let
\[C_\mu = \{\lambda \in \Lambda: r(\lambda) = s(\mu), d(\lambda) = n\}.\]
  In words, $C_\mu$ consists of the paths that we could append to $\mu$ such that $\mu \lambda \in S^J$: if $\lambda \in C_\mu$ then the product $\mu \lambda$ is defined and \[d(\mu \lambda) = d(\mu) + d(\lambda) = (m\cdot j_1, m\cdot j_2, \ldots, m\cdot j_k).\]

 Similarly, since $d(\mu \lambda) = d(\mu \lambda') = (m\cdot j_1, \ldots, m\cdot j_k) = mJ$, if $x \in Z(\mu \lambda) \cap Z(\mu \lambda')$ then the fact that $x(0, mJ)$ is well defined implies that
\[ x(0, mJ) = \mu \lambda = \mu \lambda' \Rightarrow \lambda = \lambda'.\]
It follows that if $\lambda\ne  \lambda' \in C_\mu$, then $Z(\mu \lambda) \cap Z(\mu \lambda') = \emptyset$.  Since every infinite path $x\in Z(\mu)$ has a well-defined ``first segment'' of shape $(m\cdot j_1, \ldots, m\cdot j_k)$ -- namely $x(0, mJ)$ -- every $x \in Z(\mu)$ must live in $Z(\mu \lambda)$ for precisely one $\lambda \in C_\mu$.  Thus, we can write $Z(\mu)$ as a disjoint union,
\[Z(\mu) = \bigsqcup_{\lambda \in C_\mu} Z(\mu \lambda).\]

It follows that $\Theta_\mu = \sum_{\lambda \in C_\mu} \Theta_{\mu \lambda}$, so the span of functions in $S^J$ includes the characteristic functions of cylinder sets.  Since the cylinder sets $Z(\mu)$ form a basis for the topology on $\Lambda^\infty$ with respect to which $M$ is a Borel measure, it follows that the span of $S^J$ is dense in $L^2(\Lambda^\infty, M )$ as claimed.
\end{proof}

Since the span of the functions in $S^J$ is dense in $L^2(\Lambda^\infty, M)$, we will show how to decompose $\overline{\sp}\; S^J$ as
an orthogonal direct sum,
\[\overline{\sp}\; S^J = \mathcal{V}_{0,\Lambda}\oplus\bigoplus_{j=0}^\infty \mathcal{W}^J_{j,\Lambda},\]
where  $\mathcal{V}_{0,\Lambda}$ will be equal to the subspace spanned by the functions $\{\Theta_v: v \in \Lambda\z\}.$ 
We then will construct $\mathcal{W}^J_{j,\Lambda}$  for each $j > 1$ from the functions in $\mathcal{W}^J_{0,\Lambda}$ and (some of) the operators $S_\lambda$ discussed in Section 3 of \cite{FGKP}. The construction of $\mathcal{W}^J_{0,\Lambda}$ generalizes that given in Section 4 of \cite{FGKP}, which in turn was similar to that given in Section 3 of \cite{MP} for the case of {a directed graph}.

We recall from \cite{FGKP} that the functions $\{\Theta_v: v \in \Lambda\z\}$ form an orthogonal set in $L^2(\Lambda^\infty, M),$ whose span includes those functions that are constant on $\Lambda^{\infty}$:
\begin{align*}
\int_{\Lambda^\infty} \Theta_v \overline{\Theta_w} \, dM &= \delta_{v,w} M(Z(v)) = \delta_{v,w} x^\Lambda_{v},
\end{align*}
and $$\sum_{v\in \Lambda\z}\Theta_v(x)\equiv 1.$$
Thus,  the set $\{\frac{1}{\sqrt{x^\Lambda_v}} \Theta_v: v\in \Lambda\z\}$ is an orthonormal set in $S^J.$  We define
\[\mathcal{V}_{0,\Lambda}:= \overline{\sp}\{\frac{1}{\sqrt{x^\Lambda_v}} \Theta_v: v\in \Lambda\z\}.\]

To construct $\mathcal{W}^J_{0,\Lambda}$, let $v \in \Lambda\z$ be arbitrary.  Let
\[D_v^J = \{\lambda\in \Lambda : d(\lambda) = J \text{ and } r(\lambda) = v\},\]
and write $d_v^J$ for $|D_v^J|$ (note that by our hypothesis that $\Lambda$ is a finite $k$-graph we have $d_v^J < \infty$).

Define an inner product on $\C^{d_v^J}$ by
\begin{equation}
\label{inner-prod}
\l \vec{v}, \vec{w} \r = \sum_{\lambda \in D_v^J}  \overline{v_\lambda} w_\lambda \rho(\Lambda)^{(-j_1, \ldots, -j_k)} x^\Lambda_{s(\lambda)},\end{equation}
and let $\{ c^{m, v}\}_{m=1}^{d_v^J - 1}$ be an orthonormal basis for the orthogonal complement of $(1, \ldots, 1) \in \C^{d_v^J}$ with respect to this inner product.
Let $c^{0,v}$ be the unique vector of norm one with respect to this inner product with (equal) positive entries that is a multiple of $(1, \ldots, 1) \in \C^{d_v^J}.$
Thus, $\{ c^{m, v}\}_{m=0}^{d_v - 1}$ is an orthonormal basis for $\C^{d_v^J}.$ 

We explain the importance of $(1,\ldots,1)\in \C^{d_v^J}$ further. We index the $\lambda$'s in $D_v^J:$
$$D_v^J= \{\lambda_1, \lambda_2, \cdots \lambda_{d_v^J}\}.$$
We need to stress here that $$\sum_{j=1}^{d_v^J}\Theta_{\lambda_j} = \Theta_v.$$

In this way, we have identified $\Theta_v$ with $(1,1,\cdots, 1) \in \mathbb C^{d_v^J}.$
(When we do this, we identify $\Theta_{\lambda_1}$ with $(1,0,0,\cdots, 0), \Theta_{\lambda_2}$ with $(0,1,0,\cdots, 0),$ and $\Theta_{\lambda_{d_v^J}}$ with $(0,0,0,\cdots, 1) \in \mathbb C^{d_v^J}.$)

Now, for each pair $(m,v)$ with $0\leq m \leq d_v^J - 1$ and $v$ a vertex in $\Lambda\z$, define
\[f^{m, v} = \sum_{\lambda \in D_v^J} c^{m,v}_\lambda \Theta_\lambda.\]
Note that by our definition of the measure  $M$ on $\Lambda^\infty$, since for $1\leq m\leq d_v^J-1,$ the vectors $c^{m,v}$ are orthogonal to $(1, \ldots, 1)$ in the inner product \eqref{inner-prod}, we have
\begin{align*}
\int_{\Lambda^\infty} f^{m,v} \, dM & = \sum_{\lambda \in D_v^J} c^{m,v}_\lambda M(Z(\lambda)) \\
&= \sum_{\lambda \in D_v^J} c^{m,v}_\lambda \rho(\Lambda)^{(-j_1, \ldots, j_k)} x^\Lambda_{s(\lambda)}\\
&= 0
\end{align*}
for each $(m,v)$ with $m\geq 1.$  On the other hand, if $m=0,$ it is easy to see that
\[ f^{0,v}= \sum_{\lambda \in D_v^J} c^{0,v}_\lambda \Theta_\lambda\]
is a constant multiple of $\Theta_v,$ since 
$c^{0,v}_\lambda=c^{0,v}_{\lambda'}$ for $\lambda,\;\lambda'\in \; D_v^J,$ and $\sum_{\lambda \in D_v^J}\Theta_\lambda=\Theta_v.$
Moreover, the arguments of Lemma \ref{dense-squares} tell us that $\Theta_\lambda \Theta_{\lambda'} = \delta_{\lambda, \lambda'} \Theta_\lambda$ for any $\lambda, \lambda'$ with $d(\lambda) = d(\lambda') = (j_1, \ldots, j_k)$.  Consequently, if $\lambda\in D_v^J, \lambda' \in D_{v'}^J$ for $v \not= v'$, we have  $\Theta_\lambda \Theta_{\lambda'} =0$.  It follows that
\begin{align*}
\int_{\Lambda^\infty} f^{m,v} \overline{f^{m',v'}} \, dM
&= \delta_{v, v'} \sum_{\lambda\in D_v^J} c^{m,v}_\lambda \overline{c^{m',v}_\lambda} M(Z(\lambda)) \\
&= \delta_{v, v'} \delta_{m,m'}
\end{align*}
since the vectors $\{c^{m,v}\}$ form an orthonormal set with respect to the inner product \eqref{inner-prod}.
Thus, the functions $\{f^{m,v}\}$ are an orthonormal set in $L^2(\Lambda^\infty, M)$.  We define
\[\mathcal{W}^J_{0,\Lambda} :=\overline{ \sp} \{f^{m,v}: v \in \Lambda\z, 1 \leq m \leq d_v^J-1\}.\]

Note that $\mathcal{V}_{0,\Lambda}$  is orthogonal to $\mathcal{W}^J_{0,\Lambda}$.  To see this, let $ g \in \mathcal{V}_{0,\Lambda}$  be arbitrary, so $g = \sum_{v\in \Lambda\z} g_v \Theta_v$ with $g_v \in \C$ for all $v$. Then
\begin{align*}
\int_{\Lambda^\infty} \overline{f^{m,v'}(x)} g(x) \, dM &= \sum_{v\in V_0}\delta_{v', v}\, g_v \sum_{\lambda\in D_{v'}^J} \overline{c^{m,v'}_\lambda} M(Z(\lambda)) \\
&= 0,
\end{align*}
since $\sum_{\lambda\in D_v^J} c^{m,v}_\lambda M(Z(\lambda)) = 0$ for all fixed $v,$ and $1\leq m\leq d_v^J-1$.  Thus, $g$ is orthogonal to every basis element $f^{m,v}$ of $\mathcal{W}^J_{0,\Lambda}$.

The basis $\{f^{m,v}:\; v \in \Lambda\z, 1 \leq m \leq d_v^J-1\}$ for $\mathcal{W}^J_{0,\Lambda}$ generalizes the analogue for $k$-graphs of the  {\bf graph wavelets} of \cite{MP}, as described in Section 4 of \cite{FGKP}.
As the following Theorem shows, by shifting these functions using the operators
\[\{S_\lambda: d(\lambda) = nJ \text{ for some } n \in \N\},\] 
 we obtain an orthonormal basis for $L^2(\Lambda^\infty, M)$.  Thus, each $J \in (\mathbb Z^+)^k$ gives a different family of $k$-graph wavelets associated to the representation of $C^*(\Lambda)$ described in Theorem 3.5 of \cite{FGKP}.

\begin{thm}
\label{Wavelets-Theo}  (Compare to Theorem 4.2 of \cite{FGKP}) Let $\Lambda$ be a strongly connected finite $k$-graph and fix $J \in (\mathbb Z^+)^k$.
For each fixed $j \in \N^+$ and $v\in\Lambda^0,$ let
\[C_{j,v}^J:= \{\lambda \in \Lambda: s(\lambda)=v, d(\lambda) = j J\},\]
 and let $S_\lambda$ be the operator on $L^2(\Lambda^\infty, M)$ described in Theorem 3.5 of \cite{FGKP};
 for $\xi\in L^2(\Lambda^\infty, M)$,
 \[
 S_\lambda \xi(x)=\Theta_\lambda(x) \rho(\Lambda)^{d(\lambda)/2}\xi(\sigma^{d(\lambda)}(x)).
 \]
Then
\[\{S_\lambda f^{m,v}: v \in \Lambda^0, \;\lambda \in C_{j,v}^J,\; 1 \leq m \leq d_v^J-1\}\]
is an orthonormal set.  Moreover, if $\lambda \in C_{j,v}^J,\; \mu \in C_{i,v'}^J$ for $0<i<j,$ we have
\[ \int_{\Lambda^\infty} S_\lambda f^{m,v} \overline{S_\mu f^{m',v'}} \, dM = 0 \;\;\text{for $1\le m, m'\le d_v^J-1$}.\]

It follows that defining
\[{\mathcal W}^J_{j,\Lambda} := \clsp\{S_\lambda f^{m,v}: v \in \Lambda^0, \lambda \in C_{j,v}^J, 1 \leq m \leq d_v^J-1\},\]
for $j \geq 1$, we obtain an orthonormal decomposition
\[L^2(\Lambda^\infty, M) = \clsp\;S^J = {\mathcal V_{0,\Lambda}}\oplus \bigoplus_{j=0}^\infty {\mathcal W}^J_{j,\Lambda}.\]
\end{thm}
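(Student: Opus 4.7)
The plan is to adapt the strategy from the proof of Theorem~\ref{MPwaveletsMarkov} to the finite $k$-graph setting, verifying three claims in sequence. First I would establish intra-level orthonormality of $\{S_\lambda f^{m,v}: \lambda\in C_{j,v}^J,\ v\in\Lambda^0,\ 1\leq m\leq d_v^J-1\}$ for each fixed $j\geq 1$. Second I would prove inter-level orthogonality of the subspaces $\mathcal{W}^J_{j,\Lambda}$ (covering the entire range $0\leq i<j$, not only $0<i<j$) and their orthogonality to $\mathcal{V}_{0,\Lambda}$. Third I would prove density of $\mathcal{V}_{0,\Lambda}\oplus\bigoplus_{j\geq 0}\mathcal{W}^J_{j,\Lambda}$ in $L^2(\Lambda^\infty, M)$ by showing, via Lemma~\ref{lem:squares-span}, that every $\Theta_\lambda\in S^J$ lies in this sum.

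For the first step, if $\lambda\neq\mu$ both lie in some $C_{j,v}^J$ (with $v$ possibly varying), then the cylinder sets $Z(\lambda)$ and $Z(\mu)$ are disjoint since $d(\lambda)=d(\mu)=jJ$; hence the supports of $S_\lambda f^{m,v}$ and $S_\mu f^{m',v'}$ are disjoint. When $\lambda=\mu$ (forcing $v=v'$), the single change of variables $y=\sigma^{d(\lambda)}(x)$ on $Z(\lambda)\to Z(v)$, which rescales the measure by $\rho(\Lambda)^{-d(\lambda)}$, reduces the inner product to $\langle f^{m,v},f^{m',v}\rangle=\delta_{m,m'}$.

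For the second step, the key facts are $\int_{\Lambda^\infty} f^{m,v}\,dM=0$ for $m\geq 1$ and the $k$-graph factorization property. If $\mu\in C_{i,v'}^J$ and $\lambda\in C_{j,v}^J$ with $0\leq i<j$, then either $\mu$ is a prefix of $\lambda$ (in which case $\lambda=\mu\lambda'$ uniquely with $d(\lambda')=(j-i)J$) or $Z(\lambda)\cap Z(\mu)=\emptyset$. In the prefix case I would further factor $\lambda'=\nu'\lambda''$ with $\nu'\in D_{v'}^J$, which is possible because $j-i\geq 1$ and each entry of $J$ is strictly positive, so $d(\lambda')\geq J$ componentwise. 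The function $f^{m',v'}$ is then constant on $Z(\lambda')\subseteq Z(\nu')$, equal to $c^{m',v'}_{\nu'}$, and two successive changes of variables (first $\sigma^{d(\mu)}$, then $\sigma^{d(\lambda')}$) reduce the inner product to a scalar multiple of $\int_{Z(v)}f^{m,v}\,dM=0$. The case $i=0$ (i.e.\ $\mathcal{W}^J_{0,\Lambda}\perp \mathcal{W}^J_{j,\Lambda}$) is handled by the same computation with $\mu$ replaced by the vertex $v'$, and orthogonality with $\mathcal{V}_{0,\Lambda}$ follows by substituting $\Theta_u/\sqrt{x^\Lambda_u}$ for $f^{m',v'}$.

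For the third step, I would induct on $n$ to show that $\Theta_\lambda\in\mathcal{V}_{0,\Lambda}\oplus\bigoplus_{j=0}^{n-1}\mathcal{W}^J_{j,\Lambda}$ whenever $d(\lambda)=nJ$. The case $n=0$ is immediate; for $n=1$, the $d_v^J$ orthonormal functions $\{f^{m,v}\}_{m=0}^{d_v^J-1}$ (with $v=r(\lambda)$) span the same $d_v^J$-dimensional subspace as $\{\Theta_\mu:\mu\in D_v^J\}$, giving an expansion of $\Theta_\lambda$ whose $m=0$ contribution lies in $\mathcal{V}_{0,\Lambda}$ (since $f^{0,v}$ is a multiple of $\Theta_v$) and whose $m\geq 1$ contributions lie in $\mathcal{W}^J_{0,\Lambda}$. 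For the inductive step, a direct computation shows $\Theta_\lambda = \rho(\Lambda)^{-d(\lambda_1)/2}\,S_{\lambda_1}\Theta_{\lambda_2}$ for any factorization $\lambda=\lambda_1\lambda_2$ with $d(\lambda_1)=(n-1)J$ and $d(\lambda_2)=J$; expanding $\Theta_{\lambda_2}$ via the $n=1$ case, the $m=0$ term yields a multiple of $S_{\lambda_1}\Theta_{s(\lambda_1)} = \rho(\Lambda)^{d(\lambda_1)/2}\Theta_{\lambda_1}$ (handled by the induction hypothesis), while each $m\geq 1$ term lies in $\mathcal{W}^J_{n-1,\Lambda}$ by definition. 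The main obstacle is the bookkeeping in step two: one must track the prefix relations and verify componentwise that $d(\lambda')\geq J$ so that the second change of variables lands in $Z(v)$ and the mean-zero property of $f^{m,v}$ can be invoked. It is precisely here that the hypothesis that every entry of $J$ is strictly positive plays the essential role.
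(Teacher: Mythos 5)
Your proposal is correct and follows essentially the same route as the paper's proof: expand $S_\lambda f^{m,v}$ explicitly over cylinder sets, use the orthogonality of the vectors $c^{m,v}$ to $(1,\ldots,1)$ together with the factorization property (and the positivity of every entry of $J$) for the cross-level orthogonality, and induct on $n$ over paths of degree $nJ$ to get density from Lemma \ref{lem:squares-span}. The only cosmetic differences are that you compute the cross-level inner products directly where the paper reduces $j>i\geq 1$ to $i=0$ via $S_{\mu}^{*}S_{\mu}=I$, and you peel off the final $J$-segment in the induction where the paper peels off the first; you also make explicit the check that $\mathcal{V}_{0,\Lambda}\perp\mathcal{W}^J_{j,\Lambda}$ for $j\geq 1$, which the paper leaves implicit.
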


\begin{proof}
We first observe that if $s(\lambda) = v$, then 
\[S_\lambda f^{m,v} = \sum_{\mu \in D_v^J} c^{m,v}_\mu \rho(\Lambda)^{d(\lambda)/2} \Theta_{\lambda \mu} ,\]
because the Radon-Nikodym derivatives $\Phi_{\sigma_\lambda}$ are constant on $Z(s(\lambda))$ for each $\lambda \in \Lambda$, thanks to  Proposition 3.4 of \cite{FGKP}.
In particular, if $d(\lambda) = 0$ then $S_\lambda f^{m,v} = f^{m,v}$.
Thus, if $d(\lambda) = d( \lambda') = (j\cdot j_1, \ldots, j\cdot j_k)$, the factorization property and the fact that $d(\lambda \mu ) = d(\lambda'\mu') = ((j+1)\cdot j_1, \ldots, (j+1)\cdot j_k)$ for every $\mu \in D_{s(\lambda)}^{J}, \mu' \in D_{s(\lambda')}^{J}$ implies that
\[\Theta_{\lambda \mu} \Theta_{\lambda' \mu'} = \delta_{\lambda, \lambda'} \delta_{\mu,\mu'}\;\;\text{for all $\mu \in D_{s(\lambda)}^J, \mu' \in D_{s(\lambda')}^J$}.\]
In particular, $S_\lambda f^{m, v} \overline{S_{\lambda'}f^{m',v'}} = 0$ unless $\lambda = \lambda'$ (and hence $v = v'$).  Moreover,
\begin{align*}
\int_{\Lambda^\infty} S_\lambda f^{m,v} \overline{S_\lambda f^{m',v}} \, dM &=
\sum_{\mu \in D_v^J} c^{m,v}_\mu \overline{c^{m',v}_\mu}\rho(\Lambda)^{d(\lambda)} M(Z(\lambda \mu))\\
&= \sum_{\mu \in D_v^J} c^{m,v}_\mu \overline{c^{m', v}_\mu}  \rho(\Lambda)^{-d(\mu)} x^{\Lambda}_{s(\mu)}\\
&=   \delta_{m, m'},
\end{align*}
by the definition of the vectors $c^{m,v}_\mu$, since $d(\mu) = (j_1,j_2,\ldots, j_k)$ for each $\mu \in D_v^J$.

Now, suppose $\lambda\in C_{1,v}^J$.  Observe that $S_\lambda f^{m,v} \overline{f^{m',v'}}$ is nonzero only when $v' = r(\lambda)$, and also that
\[
(S_\lambda f^{m,v})(x) \overline{f^{m',v'}}(x)=\sum_{\mu\in D_v^J} c^{m,v}_\mu \rho(\Lambda)^{d(\lambda)/2}\Theta_{\lambda\mu}(x)\sum_{\mu'\in D_{v'}^J}\overline{c^{m',v'}_{\mu'}}\Theta_{\mu'}(x).
\]
Note that $\Theta_{\lambda\mu}(x)\Theta_{\mu'}(x)\ne 0$ if $x=\lambda\mu y=\mu'y'$ for some $y,y'\in\Lambda^\infty$, and $\lambda\in C_{1,v}^J$ implies $d(\lambda)=J=d(\mu')$. So the factorization property implies that $\mu'=\lambda$, and hence we obtain
\begin{align*}
\int_{\Lambda^\infty}S_\lambda f^{m,v} \overline{f^{m',v'}} \, dM &=  \sum_{\mu \in D_v^J} c^{m,v}_\mu \overline{c^{m',v'}_\lambda} \rho(\Lambda)^{d(\lambda)/2} M(Z(\lambda\mu)) \\
&= \overline{c^{m',v'}_\lambda} \rho(\Lambda)^{-d(\lambda)/2} \sum_{\mu \in D_v^J}c^{m,v}_\mu \rho(\Lambda)^{-d(\mu)} x^\Lambda_{s(\mu)} \\
&= 0.
\end{align*}
Thus, ${\mathcal W}^J_{0,\Lambda}$ is orthogonal to ${\mathcal W}^J_{1,\Lambda}$.

In more generality, suppose that $\lambda \in C_{j,v}^J, \lambda' \in C_{i,v'}^J,\;j>i\geq 1$.  We observe that $S_\lambda f^{m,v} \overline{S_{\lambda'} f^{m', v'}}$ is nonzero only when $\lambda = \lambda' \nu$ with  $\nu \in C_{j-i,v}^J$, so we have
$$S_\lambda f^{m,v} \overline{S_{\lambda'} f^{m', v'}} =S_{\lambda'}(S_{\nu}f^{m,v})\overline{S_{\lambda'} f^{m', v'}}.$$
Consequently,
\begin{align*}
\int_{\Lambda^\infty} S_\lambda f^{m,v} \overline{S_{\lambda'} f^{m', v'}} \, dM &= \int_{\Lambda^\infty}S_{\lambda'}(S_{\nu}f^{m,v})\overline{S_{\lambda'} f^{m', v'}}\,dM\\
&= \int_{\Lambda^\infty}(S_{\nu}f^{m,v})\overline{S_{\lambda'}^*S_{\lambda'} f^{m', v'}}\,dM\\
&= \int_{\Lambda^\infty}(S_{\nu}f^{m,v})\overline{f^{m', v'}}\,dM\\
&=  \sum_{\mu \in D_v^J} c^{m,v}_\mu \overline{c^{m',v'}_\nu} \rho(\Lambda)^{d(\nu)/2} M(Z(\nu\mu)) \\
&= \overline{c^{m',v'}_\nu} \rho(\Lambda)^{-d(\nu)/2} \sum_{\mu \in D_v^J}c^{m,v}_\mu \rho(\Lambda)^{-d(\mu)} x^\Lambda_{s(\mu)} \\
&= 0.
\end{align*}
Thus, the sets ${\mathcal W}^J_{j,\Lambda}$ are mutually orthogonal as claimed.

We now need to show that $L^2(\Lambda^{\infty},M) = \mathcal{V}_{0,\Lambda}\oplus\bigoplus_{j=0}^\infty \mathcal{W}^J_{j,\Lambda}.$
We will do this by showing that
$$S^J\subset \mathcal{V}_{0,\Lambda}\oplus\bigoplus_{j=0}^\infty \mathcal{W}^J_{j,\Lambda}.$$
We first note that if $\lambda\in\Lambda$ and $d(\lambda)=(j_1,j_2,\cdots,j_k),$ then $\Theta_{\lambda}\in \mathcal{V}_{0,\Lambda}\oplus\bigoplus_{j=0}^\infty \mathcal{W}^J_{j,\Lambda}.$    Let $r(\lambda)=v,$ so that $\lambda\in D_v^J.$
Write $\lambda=\lambda_i$ for some specific $i\in \{1,2,\cdots,d_v^J\}.$ We identify $\Theta_{\lambda_i}$ with $(0,0,\ldots , 1 \text{(in}\;i_{th}\;\text{ spot)}, 0,0, \ldots, 0)=e_i \in \C^{d_v^J}.$

As we observed above, identifying $\Theta_{\lambda_i}$ with $e_i$ induces an isomorphism between the (finite-dimensional) Hilbert spaces
$$\text{span}\{\Theta_{\lambda_1},\;\Theta_{\lambda_2},\;\cdots, \Theta_{\lambda_{d_v^J}}\}\subset L^2(\Lambda^{\infty}, M)$$
and $\mathbb C^{d_v^J}$ equipped with the  inner product \eqref{inner-prod}.  By using this isomorphism, we can identify the function $f^{m,v} =\sum_{i=1}^{d_v^J} c^{m,v}_{\lambda_i}\Theta_{\lambda_i},$
 with the vector $(c^{m,k}_{\lambda_i})_i \in \C^{d_v^J}$. This identification allows us to write
\[ \Theta_{\lambda_i} = C \langle \Theta_{\lambda_i}, c^{0,v} \rangle \Theta_v + \sum_{m=1}^{d_v^J-1} \langle \Theta_{\lambda_i}, f^{m,v} \rangle f^{m,v} \] 
for some $C \in \C$, using the orthonormality of the basis $\{c^{m,v}\}_{m=0}^{d_v^j-1}$.  In other words, $\Theta_{\lambda_i} \in \mathcal{V}_{0,\Lambda} \oplus \mathcal{W}_{0, \Lambda}^J$. It follows that $\Theta_{\lambda}\in {\mathcal V}_{0,\Lambda}\oplus {\mathcal W}^J_{0,\Lambda}$ for all $\lambda\in\Lambda$ such that $d(\lambda)=(j_1,j_2,\cdots, j_k).$

We now assume that for $1\leq j\leq m,$ if $\lambda\in \Lambda$ and
$d(\lambda)=j J,$
then for any vertex $w \in \Lambda^0$,
\begin{equation}
S_{\lambda}(\Theta_w)\;\in  {\mathcal V}_{0,\Lambda}\oplus \bigoplus_{j=0}^{m-1}{\mathcal W}^J_{j,\Lambda};\;\text{and}
\end{equation}
\begin{equation}
\Theta_{\lambda}\in {\mathcal V}_{0,\Lambda}\oplus \bigoplus_{j=0}^{m-1}{\mathcal W}^J_{j,\Lambda}.
\end{equation}
We have already established the base case $m=1.$

Let us use induction to show that if $\lambda_0\in \Lambda$ and $d(\lambda_0)=(m+1)J,$ then
$$\Theta_{\lambda_0}\in {\mathcal V}_{0,\Lambda}\oplus \bigoplus_{j=0}^{m}{\mathcal W}^J_{j,\Lambda}, \text{ and } S_{\lambda_0}(\Theta_w)\;\in  {\mathcal V}_{0,\Lambda}\oplus \bigoplus_{j=0}^{m}{\mathcal W}^J_{j,\Lambda}.$$

Fix a vertex $w\in \Lambda^0.$
Let us calculate, using our standard formulas for our representation of $C^{\ast}(\Lambda)$ on $L^2(\Lambda^{\infty},M),$
$$S_{\lambda_0}(\Theta_w(x))= \Theta_{\lambda_0}(x)(\rho(\Lambda)^{d(\lambda_0)/2})\Theta_w(\sigma^{d(\lambda_0)}(x)).$$
We first note:  for this to have any chance of being non-zero, we need $x\in Z({\lambda_0})$ and $\sigma^{d(\lambda_0)}(x)$ must be in $Z(w)$.
In other words, $s(\lambda_0) = w$.  So we obtain:
 $S_{\lambda_0}(\Theta_w)$ is a constant multiple of $\Theta_{\lambda_0}$ if $w=s(\lambda_0),$ and $S_{\lambda_0}(\Theta_w)=0$ if $w\not=s(\lambda_0).$

So, assuming that $w=s(\lambda_0),$ we have that $S_{\lambda_0}(\Theta_w)$ is a constant multiple of $\chi_{Z({\lambda_0})}=\Theta_{\lambda_0}.$ Using the factorization property, now write $\lambda_0=\lambda_1\lambda_2$ with  $s(\lambda_2)\;=\;s(\lambda_0)\;=w$ and
$$d(\lambda_1)\;=\;(j_1,j_2,\cdots, j_k)$$
and
$$d(\lambda_2)\;=\;(m\cdot j_1, m\cdot j_2,\cdots, m\cdot j_k).$$
Recall
$$S_{\lambda_0}\;=\;S_{\lambda_1\lambda_2}\;=\;S_{\lambda_1}S_{\lambda_2}.$$
By our induction hypothesis,
$$S_{\lambda_2}(\Theta_w)\in {\mathcal V}_{0,\Lambda}\oplus \bigoplus_{j=0}^{m-1}{\mathcal W}^J_{j,\Lambda}.$$
Therefore we can write
$$S_{\lambda_2}(\Theta_w)=g_0+\sum_{j=0}^{m-1}h_j,$$
where $g_0\in {\mathcal V}_{0,\Lambda}$ and $h_j\in{\mathcal W}^J_{j,\Lambda}$  for $0\leq j\leq m-1.$
So,
$$S_{\lambda_0}(\Theta_w)=S_{\lambda_1}\Big(g_0+\sum_{j=0}^{m-1}h_j\Big)=\;S_{\lambda_1}(g_0)+\sum_{j=0}^{m-1}S_{\lambda_1}(h_j).$$

We have proved directly that $S_{\lambda_1}(g_0)\in {\mathcal V}_{0,\Lambda}\oplus {\mathcal W}_{0,\Lambda},$ and
it follows from the definition of ${\mathcal W}^J_{j,\Lambda}$ that
$$S_{\lambda_1}(h_j)\in {\mathcal W}^J_{j+1,\Lambda}\;\;\text{for}\;\;0\leq j\leq m-1.$$
It follows that
$$S_{\lambda_0}(\Theta_w)\in{\mathcal V}_{0,\Lambda}\oplus \bigoplus_{j=0}^{m}{\mathcal W}^J_{j,\Lambda}.$$
Since $S_{\lambda_0}(\Theta_{s(\lambda_0)})$ is a constant multiple of $\Theta_{\lambda_0},$ we have that
$$\Theta_{\lambda_0}\in {\mathcal V}_{0,\Lambda}\oplus \bigoplus_{j=0}^{m}{\mathcal W}^J_{j,\Lambda},$$ as desired.
It follows that the spanning set
$${S^J}\subset \mathcal{V}_{0,\Lambda}\oplus\bigoplus_{j=0}^\infty \mathcal{W}^J_{j,\Lambda},$$
and thus by Lemma \ref{lem:squares-span},
$$L^2(\Lambda^{\infty},M)\;=\;\mathcal{V}_{0,\Lambda}\oplus\bigoplus_{j=0}^\infty \mathcal{W}^J_{j,\Lambda}.$$
\end{proof}

We now partially answer a question posed by A. Sims, who asked about the importance of the shape $(j,j, \ldots, j)$ of the ``cubical wavelets'' introduced in \cite{FGKP}.  As we have now shown, we can construct wavelets of any non-trivial rectangular shape, not only cubes.  Sims also  asked if there
was a relationship between the dimension of the spaces $\mathcal{W}^J_{j, \Lambda}$ and the fixed rectangular shape $J =(j_1, \ldots, j_k)$. The answer is ``Not necessarily."  We recall that for $v\in \Lambda^0,$
\[D_v^J = \{\lambda\in \Lambda : d(\lambda) = (j_1,j_2,\cdots, j_k) \text{ and } r(\lambda) = v\},\]
and $d_v^J=|D_v^J|.$   The dimension of the wavelet space ${\mathcal W}^J_{j,\Lambda}$ is equal to
$$\sum_{v\in \Lambda^0}(d_v^J-1).$$
Since each $d_v^J$ depends on both $v\in\Lambda_0$ and $(j_1,j_2,\cdots, j_k)\in [\mathbb N^+]^k$,  the dimensions obviously could change with different choices of degrees.  On the other hand, if you take a degree that is $\ell$ times another degree $(j_1,j_2,\cdots,j_k)$ , it would be interesting to check whether or not the wavelet space of level $0$ corresponding to $\ell J$, $\mathcal{W}^{\ell J}_{0, \Lambda} $, is equal to
$$\bigoplus_{j=0}^{\ell-1}{\mathcal W}^J_{j,\Lambda}.$$

We also observe that,  since $j_i \geq 1 \ \forall \ i$, the factorization property implies that every $\lambda \in D^J_v$ is associated to $\lambda_1 \in D^{(1,1,\ldots, 1)}_v$, namely, $\lambda = \lambda_1 \nu$.  In other words, $\lambda_1 = \lambda(0, (1,\ldots, 1))$ is the initial segment of $\lambda$ of shape $(1,\ldots, 1)$.  Thus, $d^J_v \geq d^{(1,\ldots, 1)}_v$ for all $v.$  In fact, by mapping the basis vector $\Theta_\mu \in \C^{d^{(1,\ldots, 1)}_v}$ to the vector 
\[\Psi_\mu = \sum_{\nu: \mu \nu \in D^{J}_v} \Theta_{\mu \nu} \in \C^{d^J_v}\]
we can transfer our orthonormal basis $\{c^{m,v}\}_m$ for $\C^{d^{(1,\ldots, 1)}_v}$ to an orthonormal set in $\C^{d^J_v}$; then we can complete this orthonormal set to form the orthonormal basis for $\C^{d^J_v}$ that we use to construct the wavelet functions $f^{m,v}$.

In other words, whenever $J\geq (1,1, \ldots, 1)$, not only can we form a wavelet basis for $L^2(\Lambda^\infty, M)$ by starting with paths of shape $J$, but we can use the data of the $(1, \ldots, 1)$-wavelets as the foundation for the $J$-shape wavelets. 

\begin{example}
Here we consider the example introduced in Example~\ref{ex:k-graph} (and denoted by $\Lambda_3$ there)  and compute some wavelets in this case.
 The corresponding 2-colored graph is given as the following;
\begin{equation}
\begin{tikzpicture}[scale=1.5]
\node[inner sep=0.5pt, circle] (v) at (0,0) {$v$};
\draw[-latex, blue] (v) edge [out=150, in=210, loop, min distance=25, looseness=2.5] (v);
\draw[-latex, blue] (v) edge [out=135, in=225, loop, min distance=50, looseness=2.5] (v);
\draw[-latex, red, dashed] (v) edge [out=-20, in=40, loop, min distance=30, looseness=2.5] (v);
\node at (-0.8, 0) {$f_1$}; \node at (-1.2, 0.2) {$f_2$}; \node at (0.95, 0.05) {$e$};
\end{tikzpicture}
\end{equation}

\noindent and our factorization rules are:
\begin{align*}
{{f_2}}{{e}}={{e}{f_1}}\;\;\text{and}\;\; {{f_1}{e}}={{e}{f_2}}
\end{align*}
By these factorization rules, we see that 
any particular infinite path in $x\in \Lambda^{\infty}$ can be chosen to be of the form
$$ef_{i_1}ef_{i_2}ef_{i_3}\cdots .$$
Setting ``color 1'' to be red and dashed, and ``color 2'' to be blue and solid, 
the two incidence matrices 
of this $2$-graph are $1\times 1$ and we have $(A_1)=(1),\;(A_2)=(2).$
Therefore the Perron Frobenius-measure on cylinder sets is:
$$M(Z(e))=1,\;M(Z(ef_i))=1/2,\; M(Z(ef_i e)) = 1/2, \; M(Z(ef_i e f_j)) = 1/4, \; \text{etc},$$
where $i, j \in \{1,2\}$.

Using Theorem 3.5 of \cite{FGKP}, we construct isometries $S_e,\; S_{f_1},$ and $S_{f_2}$ on $L^2(\Lambda^{\infty},M)$ satisfying
$$S_e^{\ast}S_e\;=\;S_{f_1}^{\ast}S_{f_1}\;=\;S_{f_2}^{\ast}S_{f_2}={I},$$
$$S_{e}S_e^{\ast}=S_{f_1}S_{f_1}^{\ast}+S_{f_2}S_{f_2}^{\ast}={I}.$$
and finally
$$S_eS_{f_1}\;=\;S_{f_2}S_e\;\;\text{and}\;\; S_eS_{f_2}=S_{f_1}S_e.$$
 Fix $\xi\in L^2(\Lambda^{\infty},M)$ and $x\equiv ef_{i_1}ef_{i_2}ef_{i_3}\cdots$, where $i_j\in \{1,2\}$.
Note that our factorization rules imply that $x=f_{i_1+1}ef_{i_2+1}ef_{i_3+1}\dots$, where the addition in the subscript of $f$ is taken modulo 2.

We define
$$S_e(\xi)(x)=\chi_{Z(e)}(x)1^{1/2}2^{0/2}\xi(\sigma^{(1,0)}x)\;=\;\xi(ef_{i_1+1}ef_{i_2+1}ef_{i_3+1}\cdots);$$
$$S_{f_1}(\xi)(x)=\chi_{Z(f_1)}(x)1^{0/2}2^{1/2}\xi(\sigma^{(0,1)}x)=\;2^{1/2}\chi_{Z(f_1)}(x)\xi(ef_{i_2+1}ef_{i_3+1}\cdots);$$
$$S_{f_2}(\xi)(x)=\chi_{Z(f_2)}(x)1^{0/2}2^{1/2}\xi(\sigma^{(0,1)}x)=2^{1/2}\chi_{Z(f_2)}(x)\xi(ef_{i_2+1}ef_{i_3+1}\cdots)).$$

We further calculate:
$$S_e^{\ast}(\xi)(x)\;=\;\chi_{Z(v)}(x)1^{-1/2}2^{0/2}\xi(ex)=\xi(ef_{i_1+1}ef_{i_2+1}ef_{i_3+1}\cdots);$$
$$S_{f_1}^{\ast}(\xi)(x)\;=\;2^{-1/2}\xi(f_1x)\;=\;2^{-1/2}\xi(ef_2ef_{i_1+1}ef_{i_2+1}ef_{i_3+1}\cdots);$$
$$S_{f_2}^{\ast}(\xi)(x)\;=\;2^{-1/2}\xi(f_2x)\;=\;2^{-1/2}\xi(ef_1ef_{i_1+1}ef_{i_2+1}ef_{i_3+1}\cdots).$$

One can easily verify that the partial isometries satisfy the appropriate commutation relations.

We now construct wavelets for this example, using the method of Theorem \ref{Wavelets-Theo}. Recall $M$ is the Perron-Frobenius measure on $\Lambda^{\infty},$ and define $\phi$  to be the constant function $1$ on $\Lambda^{\infty}.$ Take $$(j_1,j_2)=(1,1),$$
and let
$$\psi = \chi_{Z(ef_1)}-\chi_{Z(ef_2)}.$$
By using the main theorem of this section or direct calculation we verify that
$$\{\phi\}\cup\, \bigcup_{j=0}^{\infty}\{S_{\lambda}(\psi): \lambda \in \Lambda, d(\lambda) = (j,j)\}$$
is an orthonormal basis for $L^2(\Lambda^{\infty},M).$

\end{example}

\begin{example}
\label{ex:ledrappier}
In this example we describe how to construct the wavelets of this section for the Ledrappier 2-graph introduced in \cite{PRW-family}.

The skeleton of this 2-graph is 
\[\begin{tikzpicture}[scale=2]
\node (v4) at (0,0){$4$};
\node (v3) at (2,0){$3$};
\node (v2) at (2,2){$2$};
\node (v1) at (0,2){$1$};
\draw[dashed, ->, red] (v4) to[bend right] node[right]{{\color{black} $h$}}  (v2);
\draw[dashed, ->, red] (v2) to[bend right] node[right]{{\color{black} $i$}} (v3);
\draw[dashed, ->, red] (v3) to node[right]{{\color{black} $j$}}  (v2);
\draw[dashed, ->, red] (v2) to[bend left] node[near end, left, below]{{\color{black} $e$}}  (v1);
\draw[dashed, ->, red] (v4) .. node[right, near start]{{\color{black} $o$}} controls (-1,-1) and (1,-1) ..   (v4);
\draw[dashed, ->, red] (v1) .. node[near start, left]{{\color{black} $c$}} controls (-0.5, 1.5) and (-0.5, 3) .. (v1);
\draw[dashed, ->, red] (v3) to[bend left] node[right, near start, above]{{\color{black} $m$}} (v4);
\draw[dashed, ->, red] (v1) to[bend left] node[left]{\color{black} $b$}  (v3);
\draw[->,blue] (v1) .. node[left, near end]{\color{black} $a$} controls (1, 3) and (-1, 3) .. (v1);
\draw[->, blue] (v1) to node[left]{\color{black} $p$}  (v4);
\draw[->, blue] (v4) to[bend left] node[left]{\color{black} $f$} (v2);
\draw[->, blue] (v2) to node[left, near end]{\color{black} $g$}  (v4);
\draw[->, blue] (v2) to[bend right] node[left, near start, below]{\color{black} $d$}  (v1);
\draw[->, blue] (v3) to[bend right] node[right]{\color{black} $k$}  (v2);
\draw[->,blue] (v3) .. node[right, near end]{\color{black} $\ell$} controls (1,-1) and (3, -1) .. (v3);
\draw[->,blue] (v4) to[bend left] node[right, below]{\color{black} $n$} (v3);
\end{tikzpicture}\]

If we define ``color 1'' to be blue and solid, and ``color 2'' red and dashed, the adjacency matrices are 
\[ A_1 = \begin{bmatrix}
1&  0 & 0 & 1 \\ 1 & 0 & 0 & 1 \\ 0 & 1 & 1& 0 \\ 0 &1 & 1 & 0 \end{bmatrix} 
\qquad 
A_2 = \begin{bmatrix}
1& 0 & 1 & 0 \\ 1 & 0 & 1 & 0 \\ 0 & 1 & 0 & 1 \\ 0 & 1 & 0 & 1 \end{bmatrix}\]

Thus, there is a unique choice of factorization rules, since for each blue-red path (of length 2) between vertices $v$ and $w$, there is exactly one  red-blue path of length 2 between $v$ and $w$.

For this 2-graph, one can check that $\rho(A_1) = \rho(A_2) = 2$ and that $x^\Lambda = \frac{1}{4} (1, 1, 1, 1)$.  Let $J = (1,2);$ then 
\[D^{J}_{v_1} = \{acc, ace, aej, aeh, dhm, dho, djb, dji\}.\]
Similarly, $d^J_{v_i} = 8 $ for all $i$, and the inner product \eqref{eq:4.9innerprod} is given by 
\[\langle \vec{x}, \vec{y} \rangle = \frac{1}{32} \sum_{j=1}^8 x_j \overline{y_j}.\]

Thus, for each $i$, an orthonormal basis $\{c^{m, v_i}\}_{m=1}^7$ for the orthogonal complement of $\vec{1} \in \C^{d_{v_i}^J}$ is given by 
\begin{align*}
c^{1, v_i} = (4, -4, 0,0,0,0,0,0)  \qquad c^{2, v_i} = (0, 0, 4, -4, 0, 0, 0, 0) & \qquad c^{3, v_i} = (0, 0, 0, 0, 4, -4, 0, 0) \\
 c^{4, v_i} = (0,0,0,0,0,0,4, -4) \qquad  c^{5, v_i} & = \sqrt{2} (2,2, -2, -2, 0,0,0,0) \\
  c^{6, v_i} = \sqrt{2} (0,0,0,0,2,2,-2,-2) \qquad c^{7, v_i} & = (2,2,2,2,-2,-2,-2,-2).
\end{align*}

We will not list all of the 28 functions in $\mathcal{W}_{0,J}$ associated to the vectors $\{c^{m, v_i}\}$; however, we observe that 
\[f^{1, v_1} = 4 \Theta_{acc} - 4 \Theta_{ace}; \qquad f^{4, v_1} = 4\Theta_{djb} - 4 \Theta_{dji}; \qquad f^{5, v_1} = 2\sqrt{2} (\Theta_{acc}+ \Theta_{ace} - \Theta_{aeh} -\Theta_{aej}).\]
\end{example}

\section{{Traffic analysis} wavelets on $\ell^2(\Lambda^0)$ {for a finite strongly connected $k$-graph $\Lambda$, and wavelets from spectral graph theory }}

Crovella and Kolaczyk argue in \cite{crovella-kolaczyk} that many crucial problems facing network engineers can profitably be approached using wavelets that reflect the structure of the underlying graph.  They give axioms that such \textbf{graph wavelets} must satisfy and provide some examples; Marcolli and Paolucci use  semibranching function systems to construct another example of graph wavelets in \cite{MP}.

We begin this section by showing  how to construct, from a $\Lambda$-semibranching function system, a family of wavelets on a higher-rank graph $\Lambda$ which meets   the specifications given in Section IV.A of \cite{crovella-kolaczyk}.  In other words, our wavelets $g^{m,J}$ of Section \ref{sec:traffic} are orthonormal functions supported on the vertices $\Lambda^0$ of the $k$-graph $\Lambda$, which have finite support and zero integral.  We thus hope that these wavelets will be of use for spatial traffic analysis on $k$-graphs, or, more generally, on networks with $k$ different types of links.

In a complementary perspective to the graph wavelets discussed in \cite{crovella-kolaczyk}, Hammond, Vandergheynst and Gribonval use the graph Laplacian in \cite{hammond} to construct wavelets on graphs.  
We show in Section \ref{sec:laplacian} how to extend their construction to higher-rank graphs, and we compare the wavelets thus constructed with the wavelets from Section \ref{sec:wavelets} and Section \ref{sec:traffic}.

\subsection{Wavelets for spatial traffic analysis}
\label{sec:traffic}

Suppose that $\Lambda$ is a finite strongly connected $k$-graph.
Fix $v \in \Lambda^0$ once and for all; for every vertex $w \in \Lambda$, fix a ``preferred path'' $\lambda_w \in v\Lambda w$.  We will use the Perron-Frobenius eigenvector $x^\Lambda$ of $\Lambda$, and the vector $\rho(\Lambda) \in (0, \infty)^k$ of eigenvalues of the adjacency matrices $A_i$, to construct our traffic analysis wavelets.

For each $ J \in \N^k$,  let
\[D_{J} = \{\lambda \in v\Lambda: d(\lambda) = J \text{ and } \lambda = \lambda_{s(\lambda)}\}.\]
Observe that $D_J$ might be empty.  We will assume that we can (and have) chosen our preferred paths $\lambda_w$ so that, for at least one $J \in \N^k$, $|D_J| \geq 2$.

If $|D_J| \geq 2$, define an inner product on $\C^{D_{J}}$ by
\begin{equation}
\label{inner-prod-traffic-2}
\langle \vec{v}, \vec{w} \rangle = \sum_{\lambda \in D_{J}}  \overline{v_\lambda} w_\lambda \rho(\Lambda)^{-J} x^\Lambda_{s(\lambda)}\end{equation}
and let $\{( c^{m,J}_\lambda)_{\lambda \in D_J}\}_{m=1}^{|D_J| - 1}$ be an orthonormal basis for the orthogonal complement of $(1, \ldots, 1) \in \C^{D_{J}}$ with respect to this inner product.

Define a  measure $\tilde{\nu}$ on $\Lambda^0$ by a variation on counting measure: if $E \subseteq \Lambda^0$, set
\[\tilde{\nu}(E) = \sum_{w \in E} \rho(\Lambda)^{-d(\lambda_w)} x^\Lambda_w.\]
For each  $(m,J)$ with $J \in \N^k$, $|D_J| >1$, and $m \leq |D_J| - 1$, define $g^{m,J} \in L^2(\Lambda^0, \tilde{\nu})$ by
\[g^{m,J}(w) = \left\{ \begin{array}{cl}
0 , & d(\lambda_w) \not= J \\
c^{m,J}_{\lambda_w}, & d(\lambda_w) = J
\end{array}\right. \]
Since the vectors $c^{m,J}$ are orthogonal to $(1, \ldots, 1)$ in the inner product \eqref{inner-prod-traffic-2}, we have
\begin{align*}
\int_{\Lambda^0} g^{m,J} \, d\tilde{\nu} & =  \sum_{w \in \Lambda^0} g^{m,J}(w) \tilde{\nu}(w) \\
& = \sum_{w: \lambda_w \in D_{J}} c^{m,J}_{\lambda_w} \rho(\Lambda)^{-J} x^\Lambda_w\\
&= 0
\end{align*}
for each $(m,J)$.  Moreover, if $g^{m,J}(w) \overline{g^{m', J'}(w)} \not= 0$, we must have $d(\lambda_w) = J = J'$; it follows that
\begin{align*}
\int_{\Lambda^0} g^{m,J} \overline{g^{m',J'}} \, d\tilde{\nu}
&= \delta_{J, J'} \sum_{w: \lambda_w\in D_{J}} c^{m,J}_{\lambda_w} \overline{c^{m',J}_{\lambda_w}} \rho(\Lambda)^{-J} x^\Lambda_{w} \\
&=   \delta_{J, J'}\delta_{m,m'}
\end{align*}
since the vectors $\{c^{m,J}\}$ form an orthonormal set with respect to the inner product \eqref{inner-prod-traffic-2}.  

In other words, 
$\{g^{m,J}\}_{m,J}$ is an orthonormal set in $L^2(\Lambda^0, \tilde{\nu})$.  However, we observe that the wavelets $g^{m,J}$ will  not span $L^2(\Lambda^0, \tilde{\nu})$; at most, we will have $|\Lambda^0| - 1$ vectors $g^{m,J}$, which occurs when all the preferred paths $\lambda_w$ are in the same $D_J$. In this case, $\{g^{m,J}\}_m \cup \{f\}$ is an orthonormal basis for $L^2(\Lambda^0, \tilde{\nu})$, where $f$ is the constant function 
\[f(w) = \frac{1}{\sqrt{\tilde{\nu}(\Lambda^0)}} = ({\rho(\Lambda)^J})^{1/2}.\]

As an example, we consider the  the Ledrappier 2-graph of Example \ref{ex:ledrappier}.  Define $v := v_1$ and observe that every vertex $v_i$ admits two paths $\lambda_i \in v \Lambda^{(1,2)} v_i$, so we can choose one of these for our ``preferred paths'' $\lambda_{v_i} := \lambda_i$.  In this case, $g^{m, J} = 0$ unless $J=(1,2)$; if we set 
\[c^{1, (1,2)} = (4,-4, 0,0), \qquad c^{2,(1,2)} = (0,0, 4, -4), \qquad c^{3,(1,2)} = \sqrt{2} (2,2, -2, -2),\]
then the vectors $\{c^{m, (1,2)}\}_m$ form an orthonormal basis for the orthogonal complement of $(1,1,1,1)$ with respect to the inner product 
\[\langle \vec{x}, \vec{y} \rangle = \sum_{i=1}^4 x_{\lambda_i} \overline{y_{\lambda_i}} \rho(\Lambda)^{(-1, -2)} x^\Lambda_{v_i} = \frac{1}{32} \sum_{i=1}^4 x_{\lambda_i} \overline{y_{\lambda_i}}.\]
Thus, our wavelets $g^{m, (1,2)}$ are given by
\begin{align*}
g^{1,(1,2)}(w) = \left\{ \begin{array}{cl} 4, & w = v_1 \\
										 -4, & w = v_2 \\
										 0, & \text{ else.}
\end{array} \right. & \quad 
g^{2, (1,2)}(w) = \left\{ \begin{array}{cl} 4, & w = v_3 \\
										 -4, & w = v_4 \\
										 0, & \text{ else.} \end{array} \right. 
\quad g^{3,(1,2)}(w) = \left\{ \begin{array}{cl} 2 \sqrt{2}, & w = v_1 \text{ or } v_2\\
										 -2\sqrt{2}, & w = v_3 \text{ or } v_4 . \end{array} \right.										 
\end{align*}
Since $|\Lambda^0| = 4$ and all of the functions $g^{m, (1,2)}$ are orthogonal (in $L^2(\Lambda^0, \tilde{\nu})$) to each other and to the constant function $f(w) = ({\rho(\Lambda)^{(1,2)}})^{1/2} = 2\sqrt{2}$, the set $\{g^{m, (1,2)}\}_m \cup \{f\}$ is an  orthonormal basis for $ L^2(\Lambda^0, \tilde{\nu})$.

\subsection{Wavelets on $\ell^2(\Lambda^0)$ coming  from spectral graph theory}
\label{sec:laplacian}
In this section we extend the definition of the graph Laplacian given by Hammond, Vandergheynst, and Gribonval in \cite{hammond} to define a Laplacian for higher-rank graphs.  For a graph (or $k$-graph) on $N$ vertices, the (higher-rank) graph Laplacian is an $N\times N$ positive definite matrix.  While the construction of the higher-rank graph Laplacian, given in Definition \ref{def-k-graph-Laplacian} below,
differs slightly from that of the graph Laplacian of \cite{hammond}, the two matrices share many of the same structural properties.  Consequently, the majority of the results from \cite{hammond} apply to the higher-rank graph Laplacian as well, with nearly verbatim proofs.  Thus, we include very few proofs in this section, instead referring the reader to \cite{hammond}.

There are many definitions of the graph Laplacian in the literature (cf.~\cite{biggs, chung-cheeger, jor-pear-0}); using the graph Laplacian to construct wavelets is also common.  We plan to explore the relationships among these varying constructions in future work.

Our definition of the $k$-graph Laplacian more closely parallels those of \cite{biggs, chung-cheeger} than that of \cite{hammond}, because the latter requires that the vertex matrix of the graph be symmetric.  While this is always the case for an undirected graph, it is rarely the case for a $k$-graph, so we have chosen to define the $k$-graph Laplacian following the lines indicated in \cite{biggs, chung-cheeger}.  We observe that in the case when the vertex matrices are indeed symmetric, the definitions in \cite{hammond} and \cite{biggs} of the graph Laplacian coincide.
%

\begin{defn} (see \cite[Definition 4.2]{biggs}, \cite{chung-cheeger})
 \label{def-k-graph-Laplacian}
Let $\Lambda$ be a finite $k$-graph with $N= |\Lambda^0|$ vertices.    For each $1 \leq s \leq k$, let $N_1^s = |\Lambda^{e_s}|$ be the number of edges of color $s$.  Define the  incidence matrix $M_s= (m^s_{i,j})_{i=1,...,N; j=1,...,N_1^s}$,  where
\[
m^s_{i,j}:= \left\{ \begin{array}{cc} +1 & \hbox{if } r(e_j) \not= s(e_j) \text{ and } r(e_j) = v_i  \\  -1 & \hbox{if }r(e_j) \not= s(e_j) \text{ and } s(e_j) = v_i  \\ 0 & \hbox{otherwise} \end{array}         \right.
\]
We then define the Laplacian $\Delta_{\Lambda}$ of $\Lambda$ to be
\[
\Delta_{\Lambda} := \sum_{s=1}^k M_sM_s^T.
\]
\end{defn}

\begin{rmk}
When $k=1$ and both definitions apply,
Proposition 4.8 of \cite{biggs} tells us that Definition \ref{def-k-graph-Laplacian} agrees with the definition of the graph Laplacian given in \cite{hammond}.

Furthermore,  each summand $M_s M_s^T$ is a positive definite symmetric matrix;  
it follows (cf.~\cite{chung}) that $\Delta_\Lambda$ has an orthonormal basis of eigenvectors and that the eigenvalues of $\Delta_\Lambda$ are all non-negative.
\end{rmk}

 Hammond, Vandergheynst,  and Gribonval point out in \cite{hammond} that the graph wavelets they describe can be viewed as arising from the graph Laplacian in the same way that continuous wavelets arise from the one-dimensional Laplacian operator $d/dx^2$.  The set of functions $\{e^{i \omega x}:\;\omega\in\mathbb R\}$ used to define the Fourier transform on $\R$ are also eigenfunctions of the Laplacian  $d/dx^2$; thus, one could interpret the inverse Fourier transform 
 \[
f(x) = \frac1{2\pi} \int \hat{f}(\omega) e^{i \omega x} d\omega
\]
as providing the coefficients of $f$ with respect to the eigenfunctions of the Laplacian.
We define the \textbf{higher-rank graph Fourier transform} analogously.

To be precise, let $\{\vec{v}_i\}_{i=1}^{N}$ be a basis of eigenvectors for $\Delta_\Lambda$.

Henceforth, we assume that 
we have ordered the eigenvalues $\lambda_1, \ldots, \lambda_{N}$  such that
\[  \lambda_1 \leq  \lambda_2 \leq \lambda_3 \cdots \leq  \lambda_{N} .\]
The \textbf{higher-rank graph Fourier transform} of a function $f \in C(\Lambda^0)$ is the function $\hat{f} \in C(\Lambda^0)$ given by
\[\hat{f}(\ell) = \langle \vec{v}_\ell, f \rangle = \sum_{n=1}^{N_0} {\vec{v}_\ell(n)} f(n).\]

The motivation for the following definition comes from the calculations in Section 5.2 of \cite{hammond}.  Specific choices for wavelet kernels, and motivations for these choices, can be found in Section 8 of the same article.
\begin{defn}
\label{def:spectral-wavelet-oper}
Let $\Lambda$ be a finite $k$-graph.
A \textbf{wavelet kernel} is a function $g: \R \to \R$ such that
\begin{enumerate}
\item $g$ is $(M+1)$-times continuously differentiable for some $M \in \N$, and $g^{(M)}(0) =: C \not= 0$;
\item On a neighborhood of 0, $g$ is \lq well approximated' (as in Lemma 5.4 of \cite{hammond}) by $c x^M$, where $c = C/M!$; 
\item $\int_0^\infty \frac{g^2(x)}{x} \, dx =: C_g < \infty$.
\end{enumerate}

Given a wavelet kernel $g$,  the \textbf{$k$-graph wavelet operator} $T_g = g(\Delta_\Lambda)$ acts on $f \in C(\Lambda^0)$ by
\[T_g(f)(m) = \sum_{\ell=1}^{N} g(\lambda_\ell) \hat{f}(\ell) \vec{v}_\ell(m) = \sum_{\ell, n =1}^{N} g(\lambda_\ell) {\vec{v_\ell}(n)} \vec{v}_\ell(m) f(n).\]
For any $t \in \R$ we also have a \textbf{time scaling} $T_g^t$ given by
\[T_g^t(f) = g(t\Delta_\Lambda)(f) = m \mapsto \sum_{\ell=1}^{N} g(t \lambda_\ell) \hat{f}(\ell) \vec{v}_\ell(m).\]

For each  $k$-graph wavelet operator $T_g$ and each $t \in \R$ we obtain a family $\{\psi_{g, t, n}\}_{1 \leq n \leq N}$ of \textbf{higher-rank graph wavelets}: If $\delta_n \in C(\Lambda^0)$ is the indicator function at the $n$th vertex of $\Lambda$,
\[ \psi_{g,t,n} := T_g^t \delta_n = m \mapsto \sum_{\ell=1}^{N} g(t \lambda_\ell) {\vec{v_\ell}(n) } \vec{v_\ell}(m).\] 
\end{defn}
%

\begin{prop}\cite[Lemma 5.1]{hammond}
Suppose $g: \R \to \R$ is a wavelet kernel and $g(0) = 0$.  Then  every function $f \in C(\Lambda^0)$ can be reconstructed from $\{\psi_{g,t,n}\}_{t,n}$:
\[f =  \frac{1}{C_g} \sum_{n=1}^{N} \int_0^\infty \frac{\langle \psi_{g, t, n}, f \rangle}{t} \psi_{g,t,n} \, dt.\]
\end{prop}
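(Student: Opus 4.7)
The plan is to verify the identity by a direct spectral computation, mirroring the classical proof of the continuous wavelet reconstruction formula. Since $\Delta_\Lambda$ is real symmetric, I may choose the orthonormal eigenbasis $\{\vec{v_\ell}\}_{\ell=1}^N$ to be real-valued, and since $g$ takes real values, the wavelets $\psi_{g,t,n}$ are real as well. First I would compute
$$\langle \psi_{g,t,n}, f \rangle \;=\; \sum_{m=1}^N \psi_{g,t,n}(m)\, f(m) \;=\; \sum_{\ell=1}^N g(t\lambda_\ell)\, \vec{v_\ell}(n)\, \hat{f}(\ell),$$
where the second equality uses the spectral expansion of $\psi_{g,t,n}$ given in Definition~\ref{def:spectral-wavelet-oper} and the definition of the higher-rank graph Fourier transform $\hat{f}(\ell) = \sum_m \vec{v_\ell}(m) f(m)$.

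Next, I would form the product $\langle \psi_{g,t,n}, f\rangle \cdot \psi_{g,t,n}(m)$ for a fixed $m\in\Lambda^0$, sum over $n$, and use the orthonormality relation $\sum_{n=1}^N \vec{v_\ell}(n)\vec{v_{\ell'}}(n) = \delta_{\ell,\ell'}$ to collapse the resulting double sum, obtaining
$$\sum_{n=1}^N \langle \psi_{g,t,n}, f\rangle \, \psi_{g,t,n}(m) \;=\; \sum_{\ell=1}^N g(t\lambda_\ell)^2\, \hat{f}(\ell)\, \vec{v_\ell}(m).$$
Dividing by $t$ and integrating over $(0,\infty)$, Fubini's theorem (justified by $\int_0^\infty g(x)^2/x\,dx = C_g < \infty$ and the fact that the sum over $\ell$ is finite) allows me to interchange the sum and the integral. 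For each $\ell$ with $\lambda_\ell > 0$, the substitution $u = t\lambda_\ell$ gives
$$\int_0^\infty \frac{g(t\lambda_\ell)^2}{t}\, dt \;=\; \int_0^\infty \frac{g(u)^2}{u}\, du \;=\; C_g,$$
while for any $\ell$ with $\lambda_\ell = 0$ the integrand vanishes identically because $g(0) = 0$. Dividing by $C_g$ therefore gives
$$\frac{1}{C_g}\sum_{n=1}^N \int_0^\infty \frac{\langle \psi_{g,t,n}, f\rangle}{t}\, \psi_{g,t,n}(m)\, dt \;=\; \sum_{\ell:\, \lambda_\ell > 0} \hat{f}(\ell)\, \vec{v_\ell}(m),$$
which, compared with the inversion formula $f(m) = \sum_\ell \hat{f}(\ell) \vec{v_\ell}(m)$, recovers $f$ provided $f$ has no component in $\ker \Delta_\Lambda$.

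The main obstacle I anticipate is precisely this last step: the hypothesis $g(0) = 0$ forces the zero-eigenvalue component of $f$ to drop out, so the formula as stated only reconstructs the projection of $f$ onto $(\ker \Delta_\Lambda)^\perp$. For a strongly connected $k$-graph the Laplacian kernel contains the constant functions, so strict equality holds only after restricting to mean-zero (or more generally $\ker\Delta_\Lambda$-orthogonal) functions, or after supplementing the wavelet family with a ``scaling function'' that captures the zero-eigenspace -- a standard device in spectral graph wavelet theory that is exactly what Hammond, Vandergheynst and Gribonval introduce in Section 6 of \cite{hammond}. The rest of the argument is a routine application of Fubini and change of variables, with no serious technical difficulty.
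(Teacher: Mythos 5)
Your computation is essentially identical to the paper's own proof: both expand $\langle \psi_{g,t,n}, f\rangle$ spectrally, sum over $n$ using the orthonormality relation $\sum_n \vec{v_\ell}(n)\vec{v_{\ell'}}(n) = \delta_{\ell,\ell'}$ to reduce to $\sum_\ell g(t\lambda_\ell)^2 \hat f(\ell)\vec{v_\ell}(m)$, and then substitute $u = t\lambda_\ell$ to produce the constant $C_g$. The one place you go beyond the paper is your closing observation, and it is correct: the substitution $u = t\lambda_\ell$ is only valid for $\lambda_\ell > 0$, and since $g(0)=0$ the terms with $\lambda_\ell = 0$ contribute $0$ rather than $C_g\,\hat f(\ell)\vec{v_\ell}(m)$, so the right-hand side reconstructs only the projection of $f$ onto $(\ker\Delta_\Lambda)^\perp$. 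The paper's proof applies the substitution uniformly in $\ell$ and then invokes $f(k) = \sum_m \hat f(m)\vec{v_m}(k)$, silently passing over the missing zero-eigenvalue term; since the Laplacian of a (strongly) connected $k$-graph always has the constant vector in its kernel (as the paper's own Ledrappier example shows, with $\lambda_1 = 0$ and $\vec{v_1}=(1,1,1,1)$), the proposition as printed holds only for $f$ orthogonal to $\ker\Delta_\Lambda$, or after supplementing the family with a scaling function, exactly as in Hammond--Vandergheynst--Gribonval. So your proposal is not merely correct in its mechanics; it identifies and repairs a genuine, if minor, gap in the paper's argument.
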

\begin{proof}
Recall that 
\[\langle \psi_{g,t,n}, f\rangle = \sum_{\ell=1}^{N} {\psi_{g,t,n}(\ell)} f(\ell) = \sum_{\ell, m=1}^{N} {g(t\lambda_m)} \vec{v_m}(\ell) {\vec{v_m}(n)} f(\ell)\]
since the eigenvectors $\vec{v_m}$ are real-valued.
Thus, 
\[ \sum_{n=1}^{N} \langle \psi_{g,t,n}, f\rangle \psi_{g,t,n}(k) = \sum_{j,\ell,m, n= 1}^{N} f(\ell) g(t\lambda_m) g(t\lambda_j) \vec{v_m}(\ell) \vec{v_m}(n) \vec{v_j}(n) \vec{v_j}(k) = \sum_{\ell, m = 1}^{N} f(\ell) g(t\lambda_m)^2 \vec{v_m}(\ell) \vec{v_m}(k)\]
since the orthonormality of the eigenvectors $\{\vec{v_m}\}_m$ implies that 
\[\langle \vec{v_m}, \vec{v_j} \rangle = \sum_n \vec{v_m}(n) \vec{v_j}(n) = \delta_{m,j}.\]
It follows that 
\begin{align*}
\sum_{n=1}^{N}\int_0^\infty \frac{\langle \psi_{g,t,n}, f \rangle}{t} \psi_{g,t,n} (k)\, dt &= \sum_{\ell, m=1}^{N} f(\ell) \vec{v_m}(\ell) \vec{v_m}(k) \int_0^\infty \frac{g(t\lambda_m)^2}{t} \, dt \\
&= \sum_m \hat{f}(m) \vec{v_m}(k)\int_0^\infty \frac{g(t\lambda_m)^2}{t} \, dt = \sum_m \hat{f}(m) \vec{v_m}(k) \int_0^\infty \frac{g(u)^2}{u} \, du\\
&= \sum_m \hat{f}(m) \vec{v_m}(k) C_g.
\end{align*}
The symmetry of the Fourier transform implies that $f(k) = \sum_m \hat{f}(m) \vec{v_m}(k)$, which finishes the proof.
\end{proof}

Our hypothesis that the wavelet kernel $g$ be well approximated by $c x^M$ for some $M \in \N$ ensures that the wavelet $\psi_{g,t,n}$ is nearly zero on vertices more than $M$ steps away from $n$.  In other words, the wavelets $\psi_{g,t,n}$ are localized near the vertex $n$.  The proof of this result is identical to that given in \cite{hammond} for the case $k=1$.

\begin{prop}\cite[Theorem 5.5]{hammond}
If $d(m, n) > M$, and if there exists $t' \in \R$ such that $|g^{(M+1)}(x) |$ is uniformly bounded for $x \in [0, t'\lambda_M]$, then there exist constants $D, t''$ such that for all $t < \min\{ t', t''\},$
\[\frac{\psi_{g,t,n}(m)}{\| \psi_{g,t,n}\|} \leq Dt.\]
\end{prop}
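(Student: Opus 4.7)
The plan is to follow the strategy of the proof of Theorem~5.5 in \cite{hammond}, adapting it to the higher-rank graph Laplacian defined in Definition~\ref{def-k-graph-Laplacian}. The pivotal structural input is that the $k$-graph Laplacian is supported on pairs of adjacent vertices: from the formula $(\Delta_\Lambda)_{i,j} = \sum_s (M_s M_s^T)_{i,j} = \sum_{s, e} m^s_{i,e} m^s_{j,e}$, the entry $(\Delta_\Lambda)_{i,j}$ can be nonzero only when $v_i = v_j$ or when $v_i$ and $v_j$ share an edge (of some color). Consequently, since $(\Delta_\Lambda^M)_{m,n}$ is a sum indexed by products of entries along walks of combinatorial length $M$, the usual induction yields $(\Delta_\Lambda^M)_{m,n} = 0$ whenever $d(m,n) > M$. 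This is the $k$-graph analogue of Lemma~5.2 in \cite{hammond}.

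First I would Taylor expand the wavelet kernel at $0$. The hypothesis that $g$ is well approximated by $cx^M$ near $0$ with $g^{(M)}(0) = C = c\, M!$ implies $g^{(j)}(0) = 0$ for $0 \le j \le M-1$, so Taylor's theorem with Lagrange remainder gives
\[
g(x) = c x^M + R(x), \qquad |R(x)| \le \frac{K}{(M+1)!}\, x^{M+1}
\]
for $x \in [0, t'\lambda_N]$, where $K$ is the uniform bound on $|g^{(M+1)}|$. Expanding $\psi_{g,t,n}(m)$ in the eigenbasis and substituting,
\[
\psi_{g,t,n}(m) \;=\; c t^M (\Delta_\Lambda^M)_{m,n} + \sum_\ell R(t\lambda_\ell)\, \vec{v}_\ell(n)\vec{v}_\ell(m).
\]
When $d(m,n) > M$ the leading term vanishes by the structural remark above, and the remainder is bounded, via $\sum_\ell \vec v_\ell(n)^2 = 1$ and Cauchy--Schwarz, by a constant multiple of $t^{M+1}$ for $t \le t'$.

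For the denominator, orthonormality of the eigenbasis gives $\|\psi_{g,t,n}\|^2 = \sum_\ell g(t\lambda_\ell)^2\, \vec{v}_\ell(n)^2$, and the Taylor expansion shows that $g(t\lambda_\ell)^2 \ge (c^2/2)(t\lambda_\ell)^{2M}$ for all $t$ smaller than some threshold $t''$ (depending on $\lambda_N$ and the error estimate), whenever $\lambda_\ell > 0$. Thus
\[
\|\psi_{g,t,n}\|^2 \;\ge\; \tfrac{c^2}{2}\, t^{2M} \!\!\sum_{\ell:\, \lambda_\ell > 0} \lambda_\ell^{2M}\, \vec{v}_\ell(n)^2,
\]
and combining this with the numerator estimate yields the claimed bound $|\psi_{g,t,n}(m)|/\|\psi_{g,t,n}\| \le D t$ for $t < \min\{t', t''\}$. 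The main obstacle is justifying that the above sum is bounded below by a strictly positive constant independent of $n$: this requires that $\delta_n$ have a non-trivial component outside $\ker \Delta_\Lambda$. For a strongly connected $k$-graph this should follow from the fact that $\ker \Delta_\Lambda$ is spanned by the constant vector (in analogy with the classical graph Laplacian), so that $\delta_n$, not being constant when $|\Lambda^0| \ge 2$, must have a non-zero projection onto at least one eigenvector with eigenvalue $\lambda_\ell > 0$; a careful verification of this connectedness fact for the $k$-graph Laplacian is the step I would expect to require the most attention.
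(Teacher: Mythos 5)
Your proposal is correct and follows essentially the same route as the paper, which simply asserts that the proof is identical to that of Theorem 5.5 of \cite{hammond}: Taylor-expand $g$ at $0$, kill the leading $c\,t^M(\Delta_\Lambda^M)_{m,n}$ term using the locality of $\Delta_\Lambda$, bound the remainder by $O(t^{M+1})$, and bound $\|\psi_{g,t,n}\|$ below by a constant times $t^M$. You also correctly isolate the only two points where the higher-rank setting requires a fresh check, namely that $(\Delta_\Lambda)_{i,j}$ vanishes for non-adjacent distinct vertices and that $\delta_n$ has a nonzero component off $\ker\Delta_\Lambda$ (which holds since strong connectedness forces $\ker\Delta_\Lambda$ to be the constants when $|\Lambda^0|\geq 2$).
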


\begin{example}
We now construct spectral $k$-graph wavelets for the Ledrappier 2-graph of Example \ref{ex:ledrappier}.  Ordering the edges alphabetically, and assigning ``color 1'' to the blue, solid edges and ``color 2'' to the red, dashed edges, we obtain 
\[ M_1 = \begin{bmatrix}
0 & 1 & 0 & 0 & 0 & 0 & 0 & -1 \\ 
0 & -1 & 1 & -1 & 1 & 0 & 0 & 0 \\
 0 & 0 & 0& 0& -1 & 0 & 1 & 0 \\
  0 & 0 & -1 & 1 & 0 & 0 & -1 & 1
\end{bmatrix}
\qquad 
M_2 = \begin{bmatrix}
-1 & 0 & 1 & 0 & 0 & 0 & 0 & 0\\
0 & 0 & -1 & 1 & -1 & 1 & 0 & 0 \\
1 & 0 & 0 & 0 & 1 & -1 & -1& 0 \\
0 & 0 & 0& -1 & 0 & 0& 1 & 0
\end{bmatrix}\]
Thus, 
\[\Delta_\Lambda = M_1 M_1^T + M_2 M_2^T = \begin{bmatrix}
 2 & -1 & 0 & -1 \\ -1 & 4 & -1 & -2 \\ 0 & -1 & 2 & -1 \\ -1 & -2 & -1 & 4 
\end{bmatrix}
+ \begin{bmatrix}
2 & -1 & -1 & 0 \\ -1 & 4 & -2 & -1 \\ -1 & -2 & 4 & -1 \\ 0 & -1 & -1 & 2
\end{bmatrix} 
= \begin{bmatrix}
4 & -2 & -1 & -1 \\ -2 & 8 & -3 & -3 \\ -1 & -3 & 6 & -2 \\ -1 & -3 & -2 & 6
\end{bmatrix}.\]
Computing the eigenvalues and eigenvectors (to two decimal places of accuracy), we obtain 
\[\lambda_1 = 0, \lambda_2 = 5.17, \lambda_3 = 10.83, \lambda_4 = 8\]
and 
\[\vec{v_1} = (1,1,1,1), \quad \vec{v_2} =   ( -0.85,  0.15, 0.35, 0.35), \]
\[ \vec{v_3} =  ( -.15, 0.85, -0.35, -0.35), \quad \vec{v_4} = (0, 0, -0.71, 0.71 ).\]
Then the wavelets $\psi_{g,t,n}$ in $\ell^2(\Lambda^0)$ are given by 
\[\psi_{g,t,n}(m) = \sum_{\ell = 1}^4 g(t \lambda_\ell) \vec{v_\ell}(n) \vec{v_\ell}(m).\]
As in \cite{hammond}, one possible wavelet kernel (with $N=2$) is 
\[g(x) := \left\{ \begin{array}{cl}
x^2, & 0 \leq x \leq 1 \\
-5 + 11x -6x^2 + x^3, & 1 < x < 2 \\
4x^{-2}, & x \geq 2
\end{array}\right.\]
Observe that $g(x) > 0 \ \forall \ x > 0$.

To distinguish these wavelets $\psi_{g,t,n}$ from those of Section \ref{sec:traffic}, we observe that (for fixed  $t \in \R$) each of the four wavelets $\psi_{g,t,n}$ is supported on all four vertices of the Ledrappier 2-graph.
\end{example}


\begin{thebibliography}{19}


\bibitem{bezuglyi-jorgensen}{S. Bezuglyi and P. E. T. Jorgensen}, \emph{Representations of Cuntz-Krieger algebras, dynamics on Bratteli diagrams, and path-space measures}, in ``Trends in Harmonic Analysis and Its Applications", 57--88,
Contemp. Math. {\bf 650}, Amer. Math. Soc., Providence, RI, 2015.

\bibitem{biggs}{N. Biggs},  \emph{Algebraic graph theory}, Cambridge Tracts in Mathematics, No. 67. Cambridge University Press, London, 1974. vii+170 pp.

\bibitem{black}{B. Blackadar},  \emph{Operator algebras: Theory of $C^{\ast}$-algebras and von Neumann Algebras}, Encyclopedia of Mathematical Sciences, No. 122. Springer-Verlag, Berlin, 2006. xx+517 pp.


\bibitem{BJA} O. Bratteli and P. E. T. Jorgensen,
 \emph{ A connection between multiresolution wavelet theory of scale $N$ and representations of the Cuntz algebra ${\mathcal O}_N$}, ``Operator algebras and quantum field theory (Rome, 1996)", 151--163, Int. Press, Cambridge, MA, 1997.


\bibitem {BJ1} O. Bratteli and P. E. T. Jorgensen,  \emph{Isometries, shifts, Cuntz algebras and multiresolution wavelet analysis of scale $N$}, Integral Equations Operator Theory
\textbf{28} (1997), 382--443.


\bibitem{BJ} O. Bratteli and P. E. T. Jorgensen, \emph{Wavelets through a Looking Glass:
The World of the Spectrum},
Birk\"{a}user, Boston, Basel, Berlin, 2002.
	
\bibitem{BR} N. Brownlowe, \emph{Realising the $C^*$-algebra of a higher rank graph as an Exel crossed product}, J. Operator Theory \textbf{68} (2012),  101--130. 


\bibitem{ckss}{T. Carlsen, S. Kang, J. Shotwell, and A. Sims}, \emph{The primitive ideals of the Cuntz-Krieger algebra of a row-finite higher-rank graph with no sources}, J. Funct. Anal. {\bf 266} (2014), 2570--2589.


\bibitem{chung}{F. K. Chung}, \emph{Spectral Graph Theory}, CBMS Reg. Conf. Ser. Math., vol. 92, American Mathematical Society, Providence, RI, 1997. xii + 207 pp.

\bibitem{chung-cheeger}{F. K. Chung}, \emph{Laplacians and the Cheeger inequality for directed graphs}, Ann. Comb. {\bf 9} (2005), 1--19.

\bibitem{crovella-kolaczyk}{M. Crovella and E. Kolaczyk},
\emph{Graph wavelets for spatial traffic analysis},
In: Proceedings of IEEE Infocom
2003, San Francisco, CA, USA, (2003), 1848--1857.

\bibitem{DMP} {J. D'Andrea, K. Merrill, and J. Packer},
\emph{Fractal wavelets of Dutkay-Jorgensen type for the Sierpinski gasket space}, in ``Frames and operator theory in analysis and signal processing", 69--88,
Contemp. Math. {\bf 451}, Amer. Math. Soc., Providence, RI, 2008.
	
\bibitem{DPY} K. R. Davidson, S. C. Power, and D. Yang,
\emph{Atomic Representations
of Rank 2 Graph Algebras,}
J. Funct. Anal. {\bf  255} (2008), 819--853.

\bibitem{DY} K. R. Davidson and D. Yang, \emph{Periodicity in rank 2 graph algebras}, Canad. J. Math. {\bf 61} (2009),  1239--1261.

\bibitem{dutkay-jorgensen-fractals} D. E. Dutkay and P.E.T. Jorgensen, \emph{  Wavelets on fractals},
Rev. Mat. Iberoam. {\bf 22} (2006),  131--180.

	
\bibitem{dutkay-jorgensen-monic} D. E. Dutkay and P.E.T. Jorgensen, \emph{Monic representations of the Cuntz algebra and Markov measure},   J. Funct. Anal. {\bf 267} (2014),  1011--1034.

\bibitem{DPS}  D. Dutkay, G. Picioroaga, and M.-S. Song, \emph{Orthonormal bases generated by Cuntz algebras},  J. Math. Anal. Appl. {\bf 409} (2014), 1128--1139.


\bibitem{E}{D. G. Evans}, \emph{On the $K$-theory of higher rank graph $C^*$-algebras}, New York J. Math. {\bf 14} (2008), 1--31.

\bibitem{Ex} R.\ Exel. \emph{Inverse semigroups and 
combinatorial C*-algebras},	Bull. Braz. Math. Soc. (N.S.), \textbf{39} (2008), 191--313.


\bibitem{FGKP} {C. Farsi, E. Gillaspy, S. Kang, and J. Packer}, \emph{Separable representations, KMS states, and
wavelets for higher-rank graphs}, J. Math. Anal. Appl. {\bf 425} (2015), 241--270. 

\bibitem{FPS} C. Farthing, D. Pask, and A. Sims,  \emph{Crossed products of $k$-graph $C^*$-algebras by $\mathbb{Z}^l$}, Houston
 J. Math. \textbf{35} (2009), 903--933.

\bibitem{hammond}
D. Hammond,  P. Vandergheynst, and R.  Gribonval,
\emph{Wavelets on graphs via spectral graph theory},
Appl. Comput. Harmon. Anal. {\bf 30}  (2011),  129--150.


\bibitem{aHKR}{A. an Huef, S. Kang, and I. Raeburn}, \emph{Spatial realisation of KMS states on the $C^*$-algebras of finite higher-rank graphs}, J. Math. Anal. Appl, {\bf 427} (2015), 977--1003.


\bibitem{aHLRS1}{A. an Huef, M. Laca, I. Raeburn, and A. Sims}, \emph{KMS states on the $C^*$-algebras of finite graphs}, J. Math. Anal. Appl. {\bf 405} (2013), 388--399.

\bibitem{aHLRS2}{A. an Huef, M. Laca, I. Raeburn, and A. Sims}, \emph{KMS states on $C^*$-algebras associated to higher-rank graphs}, J. Funct. Anal.  {\bf 266} (2014), 265--283.

\bibitem{aHLRS3}{A. an Huef, M. Laca, I. Raeburn,  and A. Sims}, \emph{KMS states on the $C^*$-algebra of a higher-rank graph and periodicity in the path space}, J. Funct. Anal. {\bf 268} (2015), 1840--1875.

\bibitem{aHLRS4}{A. an Huef, M. Laca, I. Raeburn, and A. Sims}, \emph{KMS states on the $C^*$-algebras of reducible graphs}, to appear in Ergodic Theory and Dynamical Systems.

\bibitem{hutch} {J. E. Hutchinson}, \emph{Fractals and self-similarity}, Indiana Univ. Math. J. {\bf 30}  (1981), 713--747.

\bibitem{hvrg}
D. Hammond,  P. Vandergheynst, and R.  Gribonval,
Wavelets on graphs via spectral graph theory.
Appl. Comput. Harmon. Anal. {\bf 30}  (2011),  129--150.


\bibitem{jonsson}{A. Jonsson},  \emph{Wavelets on fractals and Besov spaces},  J. Fourier Anal. Appl. {\bf 4} (1998), 329--340.


\bibitem{jor-pear-0}{P. E. T. Jorgensen and E. P. J.  Pearse},  \emph{Spectral comparisons between networks with different conductance functions}, Random walks, boundaries and spectra, 111--142, Progr. Probab. {\bf 64}, Birkhauser/Springer Basel AG, Basel, 2011.



\bibitem{KangPask}{S. Kang and D. Pask}, \emph{Aperiodicity and primitive ideals of row-finite $k$-graphs}, Internat. J. Math. {\bf 25} (2014), 1450022, 25 pp.

\bibitem{KrP} D. W. Kribs and  S. C. Power,  \emph{Analytic algebras of higher rank graphs}, Mathematical Proceedings of the Royal Irish Academy {\bf 106} (2006), 199--218


\bibitem{kawamura}{K. Kawamura}, \emph{The Perron-Frobenius operators, invariant measures and representations of the Cuntz-Krieger algebras}, J. Math. Phys. 46 (2005),  083514, 6 pp.

\bibitem{KP}{A. Kumjian and D. Pask}, \emph{Higher-rank graph $C^*$-algebras}, New York J. Math. {\bf 6} (2000), 1--20.

\bibitem{KPS-hlogy}{A. Kumjian, D. Pask, and A. Sims}, \emph{Homology for higher-rank graphs and twisted ${C}^*$-algebras}, J. Funct. Anal. {\bf 263} (2012), 1539--1574.

\bibitem{KPS-twisted}{A. Kumjian, D. Pask, and A. Sims}, \emph{On twisted higher-rank graph ${C}^*$-algebras},  Trans. Amer. Math. Soc. {\bf 367} (2015), 5177--5216. 

\bibitem{KPS-twisted-BD}{A. Kumjian, D. Pask, and A. Sims}, \emph{Twisted $k$-graph algebras associated to Bratteli diagrams}, Integral Equations Operator Theory, {\bf 81} (2015), 375--408.

\bibitem{KPS-twisted-K-theory}{A. Kumjian, D. Pask,and A. Sims}, \emph{On the $K$-theory of twisted higher-rank-graph $C^*$-algebras}, J. Math. Anal. Appl. {\bf 401} (2013), 104--113.


\bibitem{mallat} {S. Mallat}, \emph{Multiresolution approximations and wavelet orthonormal bases of $L^2(\mathbb R)$}, Trans. Amer. Math. Soc. {\bf 315} (1989),  69--87.

\bibitem{MP}{M. Marcolli and A. M. Paolucci}, \emph{Cuntz-Krieger algebras and wavelets on fractals}, Complex Anal. Oper. Theory {\bf 5} (2011), 41--81.

\bibitem{paskraebuweaver-periodic}{D. Pask, I.  Raeburn and N. A.  Weaver}, \emph{Periodic 2-graphs arising from subshifts}, Bull. Aust. Math. Soc. {\bf 82} (2010),  120--138.

\bibitem{paskraebuweaver-family}{D. Pask,  I.  Raeburn, and N. A.  Weaver}, \emph{
A family of 2-graphs arising from two-dimensional subshifts},  Ergodic Theory Dynam. Systems {\bf 29} (2009),  1613--1639.


\bibitem{PRRS}D. Pask, I. Raeburn, M. R{\o}rdam and A. Sims, \emph{Rank-two graphs whose $C^{\ast}$-algebras are direct limits of circle algebras}, J. Funct. Anal. \textbf{239} (2006), 137--178.


\bibitem{PRW-family}{D. Pask,  I.  Raeburn and N. A.  Weaver}, \emph{
A family of 2-graphs arising from two-dimensional subshifts},  Ergodic Theory Dynam. Systems {\bf 29} (2009),  1613--1639.

\bibitem{PR}{D. Pask and A. Rennie}, \emph{The noncommutative geometry of graph $C^*$-algebras. I. The index theorem}, J. Funct. Anal. {\bf 233} (2006), 92--134.

\bibitem{PRS}{D. Pask, A. Rennie and A. Sims}, \emph{The noncommutative geometry of $k$-graph $C^*$-algebras}, J. K-theory {\bf 1} (2008), 259--304.

\bibitem{Po}  S. Power, \emph{Classifying higher rank analytic Toeplitz algebras}, New York J. Math. {\bf 13} (2007), 271?98.

\bibitem{Raeburn} {I. Raeburn}, \emph{Graph algebras}, CBMS Reg. Conf. Series in Math., Vol. 103, American Mathematical Society, Providence, RI, 2005.  vii+113 pp.

\bibitem{robertson-sims}{D. I. Robertson and A. Sims},
	\emph{Simplicity of ${C}^*$-algebras associated to higher-rank graphs}, Bull. Lond. Math. Soc. {\bf 39} (2007), 337--344.

\bibitem{RoS2}{D. I. Robertson and A. Sims}, \emph{Simplicity of $C^{\ast}$-algebras associated to row-finite locally convex higher-rank graphs}, Israel J. Math. {\bf 172} (2009), 171--192.

\bibitem{RS-London}{G. Robertson and T. Steger},
\emph{$C^*$-algebras arising from group actions on the boundary
of a triangle building}, Proc. London Math. Soc. {\bf 72} (1996), 613--637.

\bibitem{RS}{G. Robertson and T. Steger},
\emph{Affine buildings, tiling systems and higher rank {C}untz-{K}rieger algebras}, J. Reine Angew. Math. {\bf 513} (1999), 115--144.


\bibitem{RSY1}{I. Raeburn, A. Sims, and T. Yeend}, \emph{Higher-rank graphs and their $C^{\ast}$-algebras}, Proc. Edinb. Math. Soc. {\bf 46} (2003), 99--115.

\bibitem{S}{A. Sims}, \emph{Gauge-invariant ideals in the $C^{\ast}$-algebras of finitely aligned higher-rank graphs}, Canad. J. Math. {\bf 58} (2006),  1268--1290.

\bibitem{SWW}{A. Sims, B. Whitehead, M.F. Whittaker}, \emph{Twisted $C^*$-algebras associated to finitely aligned higher-rank graphs}, Doc. Math. {\bf 19} (2014), 831--866.

\bibitem{SZ}A. Skalski and J. Zacharias, \emph{Entropy of shifts on higher-rank graph $C^*$-algebras}, Houston J. Math. \textbf{34} (2008), 269--282.

\bibitem{SP} J. Spielberg, \emph{Graph-based models for {K}irchberg algebras}, J. Operator Theory \textbf{57}  (2007), 347--374.

\bibitem{Strichartz1}{R. Strichartz}, \emph{Construction of orthonormal wavelets}, in ``Wavelets: mathematics and applications",
Stud. Adv. Math., CRC Press, Boca Raton, FL, 1994, pp. 23–-50.
	
	
\bibitem{Strichartz2}{R. Strichartz}, \emph{Piecewise linear wavelets on Sierpinski gasket type fractals}, J. Fourier Analysis and Applications {\bf 3} (1997),  387--416.
	
\end{thebibliography}
\end{document}